\newtheorem{thm}{Theorem}[section]
\newtheorem{lem}[thm]{Lemma}
\newtheorem{prop}[thm]{Proposition}
\newtheorem*{thmnonum}{Theorem}
\newcommand{\integer}{\mathbb{Z}}
\newcommand{\nat}{\mathbb{N}}
\newcommand{\cantor}{\{0,1\}^{\nat}}
\newcommand{\two}{\{0,1\}}
\newcommand{\homcantor}{{\mathcal H} (\cantor)}
\newcommand{\contcantor}{{\mathcal C} (\cantor)}
\newcommand{\bn}{{\mathcal B}_n}
\newcommand{\p}{\mathcal P}
\newcommand{\q}{\mathcal Q}
\newcommand{\cC}{\mathcal C}
\newcommand{\ov}{\overline}
\newcommand{\edge}[1]{\ensuremath{\overrightarrow  {#1 }}}
\newcommand{\gr}{\operatorname{Gr}}
\newcommand{\mesh}{\operatorname{mesh}}
\newcommand{\diam}{\operatorname{diam}}
\title[Graph Theoretic Structure of Maps of the Cantor Space]
{Graph Theoretic Structure of Maps of the Cantor Space}
\author[Bernardes]{Nilson C. Bernardes Jr.}
\author[Darji]{Udayan B. Darji}
\address[Bernardes]{Instituto de Matem\'atica, Universidade Federal do Rio de Janeiro,
Caixa Postal 68530, Rio de Janeiro, RJ 21945-970, Brasil} 
\email[Bernardes] {bernardes@im.ufrj.br}
\address[Darji]{Department of Mathematics, University of Louisville, Louisville, KY 40292, USA}
\email[Darji]{ubdarj01@louisville.edu}
\thanks{}
\subjclass[2000]{Primary: 37B99, 54H20, Secondary: 22D05, 05C20}
\keywords{Cantor Space; Homeomorphisms; Chaos; Conjugacy relation}
\begin{document}
\maketitle


\begin{abstract} In this paper we develop unifying graph theoretic
techniques to study the dynamics and the structure of spaces
$\homcantor$ and $\contcantor$, the space of homeomorphisms and
the space of self-maps of the Cantor space, respectively.
Using our methods, we give characterizations which determine when two
homeomorphisms of the Cantor space are conjugate to each other.
We also give a new characterization of the comeager conjugacy class of the
space $\homcantor$. The existence of this class was established by
Kechris and Rosendal and a specific element of this class was described
concretely by Akin, Glasner and Weiss. Our characterization readily implies
many old and new dynamical properties of elements of this class.
For example, we show that no element of this class has a Li-Yorke pair,
implying the well known Glasner-Weiss result that there is a comeager subset
of $\homcantor$ each element of which has topological entropy zero.
Our analogous investigation in $\contcantor$ yields a surprising result:
there is a comeager subset of $\contcantor$ such that any two elements of
this set are conjugate to each other by an element of $\homcantor$.
Our description of this class also yields many old and new results
concerning dynamics of a comeager subset of $\contcantor$. 
\end{abstract}


\section{Introduction}
In recent years $\homcantor$, the group of homeomorphisms of the Cantor space,
has enjoyed attention from mathematicians working in diverse fields such as
dynamical systems \cite{AHK}, \cite{hochman} and model theory \cite{KR}.
From the dynamics point of view, $\cantor$ is the symbol space on two
variables and a fundamental tool in analyzing topological dynamics of
complicated systems. From the model theory point of view, $\homcantor$
is important as it is isomorphic to the group of automorphisms of the
countable, atomless boolean algebra. The study of $\homcantor$ from the
dynamical systems approach utilizes analytic techniques while the model
theory viewpoint exploits fact that $\cantor$ can be viewed as the
Fra\"iss\`e limit of finite Boolean algebras. Our approach is fundamentally
different in that to each $h \in \homcantor$ and each finite partition $\p$
of clopen subsets of $\cantor$, we associate a digraph $\gr(h,\p)$.
We use geometric and graph theoretic properties of these digraphs to deduce
dynamical properties of $h$.

In one of our main results, we establish characterizations of when two
elements of $\homcantor$ are topologically conjugate to each other.
In addition to its importance from the algebraic point of view,
this result is also very important from the dynamical systems point of view.
The reason for this is that two topologically conjugate homeomorphisms have
the same topological dynamics. Hence, dynamical properties of every element
of a conjugacy class can be described by the dynamical properties of a
single member of that class. Characterizations of the conjugacy relation
for other groups such as the symmetric group on $\nat$,
the group of automorphisms of the random graph and the group of order
preserving automorphisms of the rationals were described by Truss in
\cite{truss}.

Truss \cite{truss} also defined the concept of generics in a Polish group.
We say that a {\em Polish group $G$ admits generics} if $G$ has a comeager
conjugacy class. In this case, we say that {\em a generic element
of $G$ has Property P} if every element of the comeager conjugacy class has
Property P. We caution the reader that in dynamical systems, topology and
other fields, the word generic is often used in a different manner. Namely,
it refers to a comeager subset of a complete metric space with no reference
to algebraic structure. We will use the phrase ``comeager conjugacy class''
instead of the word ``generic'' to avoid confusion. 

In 2001 Glasner and Weiss \cite{gw} showed that $\homcantor$ has a dense
conjugacy class. They called the property of having a dense conjugacy class
the topological Rohlin property. This was also shown by Akin, Hurley
and Kennedy in their monograph \cite{AHK}. In the same monograph they 
raised the question whether $\homcantor$ has a
comeager conjugacy class. This was settled affirmatively in 2007 by
Kechris and Rosendal in \cite{KR} using model theoretic techniques.
Akin, Glasner and Weiss \cite{agw}, in 2008, gave a concrete construction
of what they called a ``Special Homeomorphism'' whose conjugacy class is
comeager. As an application of the techniques developed in the present
article, we give a geometric/graph theoretic description of the elements of
this comeager conjugacy class. We describe these homeomorphisms below to give
a flavor of the ideas developed here.

Let $h \in \homcantor$ and $\p$ be a finite partition of clopen subsets of
$\cantor$. Then, $\gr(h, \p)$ is a digraph whose vertex set is $\p$ and
$\edge{ab}$ is an edge of $\gr(h,\p)$ if and only if
$h(a) \cap b \neq \emptyset$.
A digraph $H$ is a {\em loop} if the vertex set of $H$ is
$\{v_1, \ldots, v_{n}\}$ and the edges of $H$ are
$\edge{v_{n}v_1}$ and $\edge {v_iv_{i+1}}$ for $1 \le i < n$.
In this case, we say that $H$ is a loop of {\em length} $n$. A digraph
$H$ is a {\em dumbbell} if the vertex set of $H$ is the union of three
disjoint sets $l_1=\{u_1, \ldots, u_r\}$, $p=\{v_1, \ldots, v_s\}$,
$l_2=\{w_1, \ldots, w_t\}$ and the edges of $H$ are
\begin{itemize}
\item the edges of the loops formed by $l_1$ and $l_2$,
\item the edges of the path $p$, i.e., $\edge {v_i v_{i+1}}$
      for $1 \le i < s$, and
\item $\edge {u_1 v_1}$, $\edge {v_sw_1}$. 
\end{itemize} 
In this case we say that $H$ is a dumbbell of {\em type $(r,s,t)$}.
If $r=t$, then we say that the dumbbell is {\em balanced with plate weight
$r$}. Assume that a component $H$ of $\gr(h,\p)$ is a dumbbell and let us
denote $H$ as above. We say that {\em $H$ contains a left loop of $h$}
(resp.\ {\em a right loop of $h$}) if there is a nonempty clopen subset
$a$ of $u_1$ (resp.\ of $w_1$) such that $h^r(a) = a$ (resp.\ $h^t(a) = a$).

Now we are ready to describe the comeager conjugacy class of $\homcantor$. 

\begin{thmnonum}
The set of all $h \in \homcantor$ with the following property is a comeager
conjugacy class of $\homcantor$: \\
For every $m \in \nat$, there are a partition $\p$ of $\cantor$
of mesh $< 1/m$ and a multiple $q \in \nat$ of $m$ such that
every component of $\gr(h,\p)$ is a balanced dumbbell with plate weight $q!$
that contains both a left and a right loop of $h$.
\end{thmnonum}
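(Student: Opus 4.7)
The plan is to exploit the Kechris--Rosendal theorem \cite{KR} that $\homcantor$ admits a unique comeager conjugacy class: denoting by $S$ the set of $h$ satisfying the displayed property, it then suffices to verify three facts about $S$: (i) $S$ is invariant under conjugation in $\homcantor$; (ii) any two elements of $S$ are conjugate; (iii) $S$ is comeager. I would address these in this order.

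Invariance in (i) is essentially formal. If $g = \phi h \phi^{-1}$ and $\p$ is any clopen partition, then $a \mapsto \phi(a)$ gives a digraph isomorphism $\gr(h,\p) \to \gr(g,\phi(\p))$, and any clopen $a$ with $h^r(a)=a$ maps to $\phi(a)$ with $g^r(\phi(a))=\phi(a)$, so the loop data transfers. Uniform continuity of $\phi^{-1}$ lets me force $\mesh(\phi(\p)) < 1/m$ by choosing $\mesh(\p)$ small enough, and any $q$ that is a multiple of some $m'\ge m$ serves as a witness at level $m$ as well.

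For (ii) I would invoke the paper's general conjugacy theorem (already established for $\homcantor$) and run a back-and-forth. Given $h,h'\in S$, pick witnesses $(\p_n,q_n)$ for $h$ and $(\p_n',q_n')$ for $h'$ with meshes tending to $0$ and with $q_n!\mid q_{n+1}!$ and $q_n!\mid q_{n+1}'!$. The crucial combinatorial observation is that two balanced dumbbells with the same type and plate weight are isomorphic as labelled digraphs, and the presence of both a left and a right loop in every component makes it possible to refine any component of $\gr(h,\p_n)$ into a prescribed family of balanced dumbbells of plate weight $q_{n+1}!$ that is realized on the $h'$-side. Iterating produces coherent bijections $\p_n\leftrightarrow\p_n'$ whose inverse limit extends to a conjugacy $\psi$ with $\psi h\psi^{-1}=h'$.

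The comeagerness in (iii) is where most of the work will be. Write $S=\bigcap_m S_m$ with $S_m$ the set of $h$ admitting some witness at level $m$. Openness comes from the fact that for any fixed finite $\p$ the edge set of $\gr(h,\p)$ is locally constant in $h$: if $h(a)\cap b=\emptyset$ then this persists in a uniform neighborhood of $h$ by compactness, and if $h(a)\cap b\neq\emptyset$ then continuity preserves this too; the clopen witnesses of left/right loops can be propagated by passing to a compatible refinement of $\p$. Density --- the main obstacle --- would be established by a Kakutani--Rokhlin-style clopen tower construction: starting from any $h_0 \in \homcantor$ and $\varepsilon>0$ and any initial partition of mesh $<1/m$, decompose $\cantor$ into clopen $h_0$-towers, group them into towers matching a target balanced-dumbbell template, and modify $h_0$ on a small clopen set to close each orbit up into a dumbbell with an honest loop base. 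The flexibility of using a factorial plate weight $q!$ (divisible by every orbit length appearing at that level) is what makes the combinatorics feasible, and arranging that every component contains a \emph{genuine} left and right loop while staying within $\varepsilon$ of $h_0$ is the hardest piece of the construction.
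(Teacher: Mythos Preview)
Your outline has the right three ingredients, but it diverges from the paper's argument in two substantive ways, and one of your steps is thinner than it appears.

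First, the paper does \emph{not} invoke Kechris--Rosendal; the whole point of Theorem~\ref{ConjugacyClass} is to give a self-contained proof. In fact your appeal to \cite{KR} is logically redundant: once you have (i) $S$ is conjugation-invariant, (ii) $S$ is a single conjugacy class, and (iii) $S$ is comeager, then $S$ is by definition a comeager conjugacy class---no external input needed.

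Second, for density of $S_m$ the paper does not build Kakutani--Rokhlin towers directly. It applies the Approximation Theorem (Theorem~\ref{approxthm}(b)), which already delivers a nearby $g\in\homcantor$ and a fine partition $\p$ with $\gr(g,\p)$ consisting of balanced dumbbells of plate weight $q!$; then it perturbs $g$ on two vertices per dumbbell to create genuine left and right loops. Your tower approach could be made to work, but the paper's route is cleaner precisely because the graph-theoretic machinery of Section~2 absorbs the combinatorics.

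Third, and most importantly, your sketch of (ii) underestimates the difficulty. The paper does \emph{not} produce ``coherent bijections $\p_n\leftrightarrow\p_n'$''; there is no reason the two sides should have the same number of components or the same bar lengths at any stage. Instead it uses Theorem~\ref{ConjugacyRelation}, which only requires \emph{surjective} graph maps alternating direction and commuting with refinements. To make those maps cohere, the paper introduces two tools you do not mention: the ``methods of increasing the bar'' (splitting $u_1$ or $w_1$ to lengthen the bar without changing plate weight, so that bar lengths can be equalized), and the classification of subdumbbells into types $1$, $2$, $3$ according to whether they sit in the left loop, the right loop, or straddle the bar of the ambient dumbbell. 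The left/right loop hypothesis is used precisely to guarantee that subdumbbells of all three types exist after sufficient refinement, which is what makes the back-and-forth surjections compatible with the refinement maps. Your sentence ``the presence of both a left and a right loop \ldots\ makes it possible to refine \ldots'' gestures at this, but the actual mechanism is considerably more delicate than matching isomorphic dumbbells.
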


We point out that using projective Fra\"iss\`e limits, Kwiatkowska \cite{kwa}
has shown that $\homcantor$ has ample generics, a property stronger
than having a comeager conjugacy class. We would also like to point out that
the ideas used by Akin, Hurley and Kennedy in \cite{AHK} to prove that
$\homcantor$ has a dense conjugacy class has some distant resemblance to our 
techniques.

Using our graph theoretic techniques, we prove a surprising result: there is 
a comeager subset of $\contcantor$ such that any two elements of this set
are conjugate to each other by an element of $\homcantor$.  This is done
by giving a geometric/graph theoretic description of this class in a manner
similar to that of  $\homcantor$.

The notion of chaos is another well studied concept in topological dynamics. 
There are several different notions of chaos. For instance, page 1306 of
\cite{aghsy} contains a table with 11 notions of chaos and the relationships
among them, including the 4 best known notions, namely: positive topological
entropy, chaos in the sense of Devaney, weak mixing and chaos in the sense of
Li and Yorke. As is well-known, Li-Yorke chaos is the weakest of all notions
of chaos \cite{bgkm}. In \cite{gw} Glasner and Weiss showed that a comeager
subset of $\homcantor$ has topological entropy zero. Hence in some sense an
element chosen at ``random'' from $\homcantor$ has topological entropy zero
and therefore is not chaotic in this sense. As a simple corollary to our
investigation, we show that homeomorphisms of the comeager conjugacy class
have a much stronger property, namely that they have no Li-Yorke pair.

We show that homeomorphisms of this comeager conjugacy class also
have other properties which make them tame. In this direction we show that
each such homeomorphism has the shadowing property and the restriction
of the homeomorphism to each of its $\omega$-limit sets is topologically
conjugate to the universal odometer. Hochman \cite{hochman} 
showed that among all the transitive homeomorphisms of the Cantor space,
the set of homeomorphisms topologically conjugate to the universal odometer
is comeager. We also show that for all $h$ in the comeager conjugacy class,  
the set of recurrent points of $h$ is equal to the set of chain recurrent
points of $h$. Moreover, $h$ is chain continuous on a dense open subset
of $\cantor$ but not equicontinuous on an uncountable set.

We also show analogous dynamical properties of a comeager subset of
$\contcantor$. In particular, we show that the set of all $f \in \contcantor$
which have the following properties is comeager in $\contcantor$:
\begin{itemize}
\item $f$ has no Li-Yorke pair.
\item The restriction of $f$ to each of its $\omega$-limit sets is
      topologically conjugate to the universal odometer.
\item $f$ is chain continuous at every point.
\end{itemize}
Earlier it was shown in \cite{dd} that the set of $f \in \contcantor$ with
topological entropy zero and no periodic point is comeager in $\contcantor$.
In \cite{dds} it was shown that there is a comeager set of $f \in \contcantor$
such that for a comeager set of $\sigma \in \cantor$, the restriction of $f$
to the $\omega$-limit set of $f$ at $\sigma$ is topologically conjugate to
the universal odometer.

This paper is organized as follows:
Section~2 develops properties of $\gr(f,\p)$ for $f \in \contcantor$
and for $f \in \homcantor$, and contains an important approximation theorem,
Theorem~\ref{approxthm}.
In Section~3 we give characterizations of when two homeomorphisms
(or two continuous maps) of the Cantor space are topologically conjugate
to each other.
Section~4 contains an useful geometric/graph theoretic description of the
homeomorphisms of the Cantor space whose conjugacy class is comeager.
Moreover, by applying this description we obtain several results concerning
dynamical properties of the comeager conjugacy class of $\homcantor$.
In Section~5 we prove the surprising result that there is a comeager subset
of $\contcantor$ such that any two elements of this set are conjugate
to each other by an element of $\homcantor$.
Using the description of this set,
we prove dynamical properties of elements of this set.


\section{Approximation Theorem}
By Cantor space we mean any compact, $0$-dimensional metric space without
isolated points. The principal model of the Cantor space we use is $\cantor$
endowed with the product topology, where $\two$ is given the discrete topology.
This topology is generated by the metric $d(\sigma,\tau) = \frac{1}{n}$
where $n$ is the least positive integer where $\sigma(n) \neq \tau(n)$
if such an integer exists and $d(\sigma,\tau) =0$, otherwise.

By a partition of $\cantor$ we mean a finite collection of nonempty pairwise
disjoint clopen sets whose union is $\cantor$. A map $\nu : \p \to \q$,
between partitions $\p$ and $\q$ of $\cantor$, is called a refinement map if
$$
\nu(a) \supset a \ \ \text{ for all } a \in \p.
$$
In this case, we say that $\p$ is a refinement of $\q$. Note that a
refinement map is necessarily surjective.

For each finite collection $\cC$ of nonempty subsets of $\cantor$,
we define the mesh of $\cC$ by
$$
\mesh(\cC) = \max_{A \in \cC} \diam(A).
$$

If $\sigma$ is a finite string of $0$'s and $1$'s, then $[\sigma]$ denotes
the set of all points of $\cantor$ which are extensions of $\sigma$.
For each $n \in \nat$, we consider the partition
$$
\bn= \{ [\sigma] : \sigma \in \two^n\}
$$
of $\cantor$. Note that
$$
\mesh(\bn) \to 0 \ \ \text{ as } n \to \infty.
$$
Moreover, $\bigcup \bn$ is a basis of clopen sets for the topology of the
Cantor space.

We use $\contcantor$ (resp.\ $\homcantor$) to denote the space of
all continuous maps (resp.\ of all homeomorphisms) of the Cantor space,
endowed with the following metric:
$$
\tilde{d}(f,g) = \max _{\sigma \in \cantor} d(f(\sigma), g(\sigma)).
$$
If $f, g \in \contcantor$ and $\p$ is a partition of $\cantor$,
then $f \sim_\p g$ means that $f(\sigma)$ and $g(\sigma)$ lie in the same
member of $\p$ for every $\sigma \in \cantor$. Note that
$$
f \sim_\p g \ \ \Longrightarrow \ \ \tilde{d}(f,g) \leq \mesh(\p).
$$

Central to our investigation is the notion of a digraph ($=$ directed graph).
A digraph $G$ consists of a finite set $V(G)$ of vertices together with
a set $E(G)$ of directed edges between vertices.
By a {\em left end} of $G$ (resp.\ a {\em right end} of $G$) we mean a
vertex $v$ of $G$ that has no incoming edge (resp.\ no outgoing edge).
We say that $G$ is a {\em digraph without right ends}
(resp.\ a {\em digraph without ends}) if $G$ has no right end
(resp.\ no left end and no right end).
If $G$ and $H$ are digraphs, then a digraph map $\phi : H \to G$ is a
map from the vertex set of $H$ into the vertex set of $G$ such that
$\edge{\phi(u)\phi(v)}$ is an edge of $G$ whenever $\edge{uv}$ is an
edge of $H$. We say that $\phi$ is surjective when it is a surjection
between the sets of vertices (but it need not be surjective on the sets
of edges).
By a component of a digraph $G$ we simply mean a largest
(in vertices and edges) subgraph $H$ of $G$ such that
given any two vertices $a, b$ in $H$, there are vertices
$a_1, \ldots, a_n$ in $H$ such that $a_1 =a, a_n =b$ and for any
$1 \le i < n$, $\edge{a_ia_{i+1}}$ or $\edge{a_{i+1}a_i}$ is an edge of $H$.

To each $f \in \contcantor$ and each partition $\p$ of $\cantor$,
we associate a digraph $\gr(f,\p)$ in the following fashion: the vertices
of $\gr(f,\p)$ are the elements of $\p$ and for sets $a, b \in \p$,
directed edge $\edge{ab} \in \gr(f,\p)$ if and only if
$f(a) \cap b \neq \emptyset$. Note that $\gr(f,\p)$ is always a
digraph without right ends. If $f$ is surjective, then $\gr(f,\p)$ is a
digraph without ends. If $\nu : \p \to \q$ is a refinement map, then
$\nu : \gr(f,\p) \to \gr(f,\q)$ is a surjective graph map (in the present
case, it is surjective on edges too). Technically speaking,
$$
\p \to \gr(f,\p)
$$
is a functor from the category of partitions of the Cantor space
(whose morphisms are the refinement maps) to the category of digraphs
(whose morphisms are the digraph maps). Moreover,
$$
f \sim_\p g \ \ \Longrightarrow \ \ \gr(f,\p) = \gr(g,\p).
$$

\begin{thm}\label{graphhom}
Let $G$ be a digraph without right ends whose vertex set $\p$ is a
partition of $\cantor$ and define
$$
X = \bigcup \{a \in \p : a \text{ is a left end of } G\}.
$$
Then, there is a homeomorphism $f$ from $\cantor$ onto $\cantor \backslash X$
such that
$$
\gr(f,\p) = G.
$$
In particular, if $G$ is a digraph without ends, then $f \in \homcantor$.
\end{thm}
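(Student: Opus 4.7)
The plan is to construct $f$ vertex-by-vertex over $\p$, spreading each $a \in \p$ over its outgoing neighbors in $G$ and filling each non-left-end $b$ with contributions from its incoming neighbors. The essential tool is Brouwer's characterization of the Cantor space: every nonempty clopen subset of $\cantor$ is compact, metrizable, zero-dimensional and without isolated points, hence homeomorphic to $\cantor$. In particular, any such set can be split into any prescribed finite number of nonempty clopen pieces, and between any two nonempty clopen subsets of $\cantor$ there is a homeomorphism.

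For each $a \in \p$ set $E_a = \{ b \in \p : \edge{ab} \in E(G)\}$; this is nonempty since $a$ is not a right end of $G$. For each $b \in \p \setminus X$ set $I_b = \{ a \in \p : \edge{ab} \in E(G)\}$; this is nonempty by the very definition of $X$. Choose nonempty clopen partitions
\[
a \;=\; \bigsqcup_{b \in E_a} P_{a,b} \quad (a \in \p) \qquad \text{and} \qquad b \;=\; \bigsqcup_{a \in I_b} Q_{a,b} \quad (b \in \p \setminus X),
\]
and observe that the pairs $(a,b)$ indexing the $P$'s coincide with those indexing the $Q$'s: both are exactly the edges $\edge{ab}$ of $G$. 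For each such edge fix a homeomorphism $f_{a,b} : P_{a,b} \to Q_{a,b}$ and define $f$ by gluing: $f|_{P_{a,b}} = f_{a,b}$.

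The verifications are then routine. The family $\{P_{a,b}\}$ is a finite clopen partition of $\cantor$, so $f$ is well defined and continuous. The family $\{Q_{a,b}\}$ is pairwise disjoint and its union equals $\bigsqcup_{b \notin X} b = \cantor \setminus X$, so $f$ is a continuous bijection from a compact space onto $\cantor \setminus X$, hence a homeomorphism onto $\cantor \setminus X$. Finally, for any $a, b \in \p$,
\[
f(a) \cap b \;=\; \bigcup_{c \in E_a} \bigl( Q_{a,c} \cap b \bigr) \;=\; \begin{cases} Q_{a,b} \neq \emptyset & \text{if } \edge{ab} \in E(G),\\ \emptyset & \text{otherwise,}\end{cases}
\]
which yields $\gr(f,\p) = G$. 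The ``in particular'' clause is immediate: if $G$ has no ends then $X = \emptyset$ and $f \in \homcantor$. There is no substantive obstacle here; the only point requiring care is ensuring that every piece $P_{a,b}$ and every piece $Q_{a,b}$ can simultaneously be chosen nonempty, which is exactly what the ``no right ends'' hypothesis and the definition of $X$ guarantee.
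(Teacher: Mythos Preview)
Your proof is correct and follows essentially the same approach as the paper: partition each vertex $a$ into clopen pieces indexed by its outgoing edges, partition each non-left-end vertex $b$ into clopen pieces indexed by its incoming edges, and glue homeomorphisms along edges. Your write-up is in fact somewhat more explicit than the paper's, spelling out the Brouwer characterization that underlies the splitting and matching of clopen sets and verifying $\gr(f,\p)=G$ directly; the only cosmetic blemish is the notation ``$b \in \p \setminus X$'' (since $X \subset \cantor$ while $\p$ is a family of sets), but the intended meaning---$b$ is not a left end---is clear.
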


\begin{proof} For each $a \in \p$, let $O_a$ be a partition of $a$
whose cardinality is equal to the number of edges of $G$ going out $a$.
For each $b \in \p$ which is not a left end of $G$, let $I_b$ be a
partition of $b$ whose cardinality is equal to the number of edges of $G$
coming into $b$. For each edge $e = \edge{ab} \in G$, we define an
ordered pair $(x_e,y_e)$ such that 
\begin{itemize}
\item $x_e  \in O_{a}$, $y_e \in I_{b}$ and 
\item if $e$ and $e'$ are two distinct edges in $G$, then $x_e \neq x_{e'}$
      and $y_e \neq y _{e'}$. 
\end{itemize}
We note that the second condition above implies that $x_e \cap x_{e'} =
\emptyset$ and $y_e \cap y _{e'} = \emptyset$. Now, let $f$ be a
homeomorphism from $\cantor$ onto $\cantor \backslash X$ such that
$$
f(x_e) = y_e \ \ \text{ for each edge } e \in G.
$$
Then, $\gr(f,\p) = G$.
\end{proof}

\begin{thm}\label{approx}
Let $f \in \contcantor$ and $\q$ be a partition of $\cantor$.
Assume that $G$ is a digraph without right ends and that
$\phi : G \to \gr(f,\q)$ is a surjective graph map. Then:
\begin{itemize}
\item [(a)] There exists a refinement map $\nu : \p \to \q$ and a bijection
      $\psi$ from $\p$ onto the vertex set of $G$ such that
      $\nu = \phi \circ \psi$.
\item [(b)] There exists $g \in \contcantor$ such that
      $\psi : \gr(g,\p) \to G$ is a digraph isomorphism.
      Moreover, if $G$ is a digraph without ends, then we may take such a $g$
      in $\homcantor$.
\item [(c)] For any $g$ as in (b),
      $$
      \tilde{d}(f,g) \leq \mesh(\q) + \mesh(f(\q)),
      $$
      where $f(\q) = \{f(a) : a \in \q\}$.
\end{itemize}
\end{thm}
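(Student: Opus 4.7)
The plan for (a) is to refine $\q$ ``over'' the fibers of $\phi$. For each $b \in \q$, the surjectivity of $\phi$ makes $\phi^{-1}(b) \subseteq V(G)$ a nonempty finite set of some size $k_b$. Since every nonempty clopen subset of $\cantor$ splits into any prescribed finite number of nonempty clopen pieces, I partition $b$ into $k_b$ such pieces and biject them with $\phi^{-1}(b)$; I call this bijection $\psi$ on these pieces. Collecting over all $b \in \q$ yields a partition $\p$ of $\cantor$ and a bijection $\psi : \p \to V(G)$. Defining $\nu(a) = b$ whenever $a \subseteq b$ gives a refinement map with $\nu(a) = b = \phi(\psi(a))$ for every $a \in \p$.

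For (b), I transport $G$ to $\p$ via $\psi$: let $G^*$ be the digraph on vertex set $\p$ in which $\edge{aa'}$ is an edge iff $\edge{\psi(a)\psi(a')}$ is an edge of $G$. Since $\psi$ is a bijection, $G^*$ inherits from $G$ the property of having no right ends (and, if $G$ has no ends, of having no ends). Applying Theorem~\ref{graphhom} to $G^*$ produces a map $g$ which is a homeomorphism from $\cantor$ onto $\cantor \setminus X$ with $\gr(g,\p) = G^*$; regarded as a self-map of $\cantor$ this is an element of $\contcantor$, and it lies in $\homcantor$ when $G$ has no ends (so $X = \emptyset$). By construction of $G^*$, the vertex bijection $\psi : \gr(g,\p) \to G$ preserves edges in both directions, i.e., it is a digraph isomorphism.

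For (c), I fix $\sigma \in \cantor$ and chase the digraph maps. Let $a \in \p$ contain $\sigma$ and $a' \in \p$ contain $g(\sigma)$. Then $\edge{aa'} \in \gr(g,\p)$, so $\edge{\psi(a)\psi(a')} \in G$, so (using $\phi \circ \psi = \nu$ from (a)) $\edge{\nu(a)\nu(a')} \in \gr(f,\q)$, which means $f(\nu(a)) \cap \nu(a') \neq \emptyset$; pick $\tau$ in the intersection. Since $\sigma \in a \subseteq \nu(a)$, both $f(\sigma)$ and $\tau$ lie in $f(\nu(a))$, so $d(f(\sigma),\tau) \leq \mesh(f(\q))$; similarly $g(\sigma), \tau \in \nu(a')$ gives $d(g(\sigma),\tau) \leq \mesh(\q)$, and the triangle inequality closes the estimate.

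The argument is essentially bookkeeping: no fixed-point arguments or choices beyond those built into Theorem~\ref{graphhom}. The one spot that requires care is keeping straight the three digraphs $G$, its transport $G^*$ on $\p$, and $\gr(g,\p)$, and using the tautology $\nu = \phi \circ \psi$ set up in (a) to link edges of $G$ with edges of $\gr(f,\q)$ as traversed in the metric bound of (c).
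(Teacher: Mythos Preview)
Your proof is correct and follows essentially the same route as the paper's: partition each $b\in\q$ into $|\phi^{-1}(b)|$ clopen pieces to build $\p$ and $\psi$, transport $G$ along $\psi^{-1}$ and invoke Theorem~\ref{graphhom} for (b), and chase edges through $\phi\circ\psi=\nu$ for the metric estimate in (c). The only cosmetic difference is that in (c) you take the intermediate point $\tau$ in $f(\nu(a))\cap\nu(a')$ whereas the paper takes its preimage in $\nu(a)$; the resulting triangle-inequality bound is identical.
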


\begin{proof}
(a) For each $a \in \q$, choose a partition $\p_a$ of $a$ with the same
cardinality as $\phi^{-1}(a)$. Consider the partition
$$
\p = \bigcup_{a \in \q} \p_a
$$
of $\cantor$ and define $\psi$ as a bijection from $\p_a$ onto $\phi^{-1}(a)$
for each $a \in \q$. Then $\psi$ is a bijection from $\p$ onto the vertex
set of $G$ and $\phi \circ \psi : \p \to \q$ is a refinement map.

\smallskip
\noindent (b) There exists a unique digraph $H$ whose vertex set is $\p$
for which $\psi : H \to G$ is a digraph isomorphism.
By Theorem~\ref{graphhom}, there exists $g \in \contcantor$ such that
$$
\gr(g,\p) = H.
$$
Moreover, Theorem~\ref{graphhom} gives $g \in \homcantor$ if $H$
(or equivalently $G$) is a digraph without ends.

\smallskip
\noindent (c) Let us fix $\sigma \in \cantor$. Let $a, b \in \p$ be such that
$\sigma \in a$ and $g(\sigma) \in b$. Then $\edge{ab} \in \gr(g,\p)$,
which implies that
$$\edge{\nu(a)\nu(b)} = \edge{\phi(\psi(a))\phi(\psi(b))} \in \gr(f,\q).$$
Hence, there exists $\tau \in \nu(a)$ such that $f(\tau) \in \nu(b)$.
As $f(\tau), g(\sigma) \in \nu(b)$, we have that
$d(f(\tau),g(\sigma)) \leq \mesh(\q)$.
As $\sigma, \tau \in \nu(a)$, $d(f(\sigma),f(\tau)) \leq \mesh(f(\q))$.
Consequently, $d(f(\sigma),g(\sigma)) \leq \mesh(\q) + \mesh(f(\q))$.
\end{proof}

A digraph $\ell$ is a {\em loop} if the vertex set of $\ell$ is
$\{v_1, \ldots, v_{n}\}$ and the edges of $\ell$ are
$\edge{v_{n}v_1}$ and $\edge{v_iv_{i+1}}$ for $1 \leq i < n$.
In this case, we say that $\ell$ is a loop of {\em length} $n$.

A digraph $B$ is a {\em balloon} if the vertex set of $B$ is the union of
two disjoint sets $p = \{v_1,\ldots,v_s\}$ and $\ell = \{w_1,\ldots,w_t\}$,
and the edges of $B$ are
\begin{itemize}
\item the edges of the path $p$, i.e., $\edge{v_iv_{i+1}}$ for $1 \leq i < s$,
\item the edges of the loop formed by $\ell$, and
\item $\edge{v_sw_1}$.
\end{itemize}
In such case we say that $B$ is a balloon of {\em type $(s,t)$} and we call
$v_1$ the {\em initial vertex of $B$}. Whenever we write a balloon $B$
simply as
$$
B = \{v_1,\ldots,v_s\} \cup \{w_1,\ldots,w_t\},
$$
we implicitly assume that it is the balloon described above.

A digraph $D$ is a {\em dumbbell} if the vertex set of $D$ is the union of
three disjoint sets $l_1=\{u_1, \ldots, u_r\}$, $p=\{v_1, \ldots, v_s\}$ and
$l_2=\{w_1, \ldots, w_t\}$, and the edges of $D$ are
\begin{itemize}
\item the edges of the loops formed by $l_1$ and $l_2$,
\item the edges of the path $p$, i.e., $\edge {v_i v_{i+1}}$
      for $1 \le i < s$, and
\item $\edge {u_1 v_1}$, $\edge {v_sw_1}$. 
\end{itemize} 
In this case we say that $D$ is a dumbbell of {\em type $(r,s,t)$}. If $r=t$,
then we say that the dumbbell is {\em balanced with plate weight $r$}.
We say that $s$ is {\em the length of the bar} of the dumbbell.
Whenever we write a dumbbell $D$ simply as
$$
D = \{u_1,\ldots,u_r\} \cup \{v_1,\ldots,v_s\} \cup \{w_1,\ldots,w_t\},
$$
we implicitly assume that it is the dumbbell described above.

\begin{lem}\label{graphlemma}
{\rm (a)} Let $G$ be a digraph without right ends.
For any edge $e = \edge{uv}$ of $G$, there exist positive integers
$S$ and $M$ so that if $s \geq S$ and $m$ is a positive integer multiple
of $M$, then a balloon of type $(s,m)$ admits a graph map into $G$
such that $u$ is the image of the initial vertex of the balloon.\\
{\rm (b)} Let $G$ be a digraph without ends.
For any edge $e = \edge{uv}$ of $G$, there exist positive integers
$N$, $S$ and $M$ so that if $s \geq S$ and $n, m$ are positive integer
multiples of $N, M$, respectively, then a dumbbell of type $(n,s,m)$
admits a graph map into $G$ such that $e$ is the image of a bar edge
in the dumbbell.
\end{lem}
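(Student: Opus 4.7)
The plan is to use the finiteness of $V(G)$ together with the end-free hypotheses to force certain greedy walks in $G$ to become eventually periodic, and then to build the required graph maps by sending the path part of the balloon (resp.\ the bar of the dumbbell) along such a walk, with the loops wrapping around the cycles harvested from the walks.

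For part (a), I extend the edge $e = \edge{uv}$ into an infinite forward walk $u = x_0, x_1 = v, x_2, x_3, \ldots$ in $G$ by using the no-right-ends hypothesis to choose an outgoing edge at each step. Since $V(G)$ is finite, there exist indices $p < q$ with $x_p = x_q$; I set $M := q-p$, so that $x_{k+M} = x_k$ for every $k \geq p$, and take $S := p+1$. For any $s \geq S$ and any positive multiple $m$ of $M$, the assignment
$$
\phi(v_i) := x_{i-1} \text{ for } 1 \leq i \leq s, \qquad \phi(w_j) := x_{s-1+j} \text{ for } 1 \leq j \leq m,
$$
satisfies $\phi(v_1) = u$ and sends each prescribed edge of the balloon to a consecutive pair in the walk. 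The only nonobvious check is the closing loop edge $\edge{\phi(w_m)\phi(w_1)} = \edge{x_{s-1+m}\,x_s}$, which reduces to $\edge{x_{s-1}\,x_s} \in E(G)$ via periodicity (since $s-1 \geq p$ and $m$ is a multiple of $M$).

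For part (b), I produce two walks: a backward walk $u = y_0, y_1, y_2, \ldots$ with $\edge{y_{k+1} y_k} \in E(G)$, made possible by the no-left-ends hypothesis and eventually periodic with some period $N$ from index $p_1$ onward; and a forward walk $v = z_0, z_1, z_2, \ldots$, eventually periodic with some period $M$ from index $p_2$ onward. Setting $S := p_1 + p_2 + 2$ and $k := p_1 + 1$, for any $s \geq S$ and any positive multiples $n, m$ of $N, M$, I define the bar by $\phi(v_i) := y_{k-i}$ for $1 \leq i \leq k$ and $\phi(v_i) := z_{i-k-1}$ for $k+1 \leq i \leq s$, so that $\phi(v_k) = u$, $\phi(v_{k+1}) = v$, and the bar edge $\edge{v_k v_{k+1}}$ maps onto $e$. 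I then set $\phi(u_1) := y_k$ (which lies on the left cycle since $k \geq p_1$) and traverse the forward cycle through $y_k$ in $G$ to define $\phi(u_2), \ldots, \phi(u_n)$; because $n$ is a multiple of $N$, after $n$ steps we return to $y_k$, so $\edge{\phi(u_n)\phi(u_1)} \in E(G)$. An analogous construction on the right uses $\phi(w_1) := z_{s-k}$. The connecting edges $\edge{\phi(u_1)\phi(v_1)} = \edge{y_k y_{k-1}}$ and $\edge{\phi(v_s)\phi(w_1)} = \edge{z_{s-k-1} z_{s-k}}$ are forced by the walk definitions, so $\phi$ is a graph map.

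The main bookkeeping subtlety, which I would handle carefully, concerns the orientation of the left cycle: because the backward walk decreases the graph-index with each forward step in $G$, the cycle is traversed in $G$ as $y_k, y_{k-1}, \ldots, y_{p_1}, y_{p_1+N-1}, \ldots, y_{p_1+1}, y_{p_1}, \ldots$, using the identity $y_{p_1} = y_{p_1+N}$ to pass the ``seam.'' Apart from this index-tracking wrinkle, the two parts are structurally parallel and the edge verifications are routine.
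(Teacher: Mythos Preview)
Your argument is essentially the same as the paper's: both exploit the finiteness of $V(G)$ to locate a walk that eventually closes into a cycle (what the paper calls a \emph{pseudo-balloon} or \emph{pseudo-dumbbell}), then wrap the balloon/dumbbell around this structure. The paper phrases things in terms of a finite walk with a marked repeat and then ``continues around'' the pseudo-loop; you phrase the same thing via an infinite eventually periodic walk and explicit index formulas. The edge checks you describe are correct.

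One point needs tightening. You write ``there exist indices $p < q$ with $x_p = x_q$; I set $M := q-p$, so that $x_{k+M} = x_k$ for every $k \geq p$.'' As stated this does not follow: a walk built by arbitrary choices at each step need not be eventually periodic merely because some vertex repeats. The fix is trivial---after locating the first repeat $x_q = x_p$, \emph{redefine} the walk for indices $\geq q$ by periodically repeating the segment $x_p,\ldots,x_{q-1}$---but you should say so explicitly (and likewise for the backward walk in part~(b)). The paper sidesteps this by working directly with the finite pseudo-balloon rather than an infinite walk. Once this is clarified, your proof is complete and matches the paper's.
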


\begin{proof}
(a) Since $G$ is a digraph without right ends, we can start with edge $e$
and continue a path in $G$ to the right an arbitrary number of steps.
Since there are only finitely many vertices, we obtain a {\em pseudo-balloon}
in $G$, that is, a path
$$
u = u_1, v = u_2,\ldots,u_S,u_{S+1},\ldots,u_{S+M},u_{S+M+1}
$$
with
$$
u_{S+M+1} = u_{S+1}.
$$
We call it a pseudo-balloon because the vertices need not be distinct.
If $s \geq S$ and $m$ is a positive integer multiple of $M$, then we can
extend the bar length from $S$ to $s$ by moving into the pseudo-loop
and continuing aroud it. Then we can go around the pseudo-loop as often as
we want to obtain a pseudo-loop of length $m$. In this way we obtain
a pseudo-balloon in $G$ of the form
$$
u = v_1, v = v_2,\ldots,v_s,v_{s+1},\ldots,v_{s+m},v_{s+m+1} = v_{s+1}.
$$
Now, let $w_i = (v_i,i)$ for $i = 1,\ldots,s+m$ and consider the balloon
$$
B = \{w_1,\ldots,w_s\} \cup \{w_{s+1},\ldots,w_{s+m}\}.
$$
The projection $w_i \in B \mapsto v_i \in G$ is the required graph map.

\smallskip
\noindent (b) Since $G$ is a digraph without ends, we can apply the same
procedure as in (a) also to the left. In this way we obtain a
{\em pseudo-dumbbell} in $G$, that is, a path
$$
u_0,u_1,\ldots,u_N,u_{N+1},\ldots,u_{N+S},
    u_{N+S+1},\ldots,u_{N+S+M},u_{N+S+M+1}
$$
with
$$
u_N = u_0 \ \ \ \text{ and } \ \ \ u_{N+S+M+1} = u_{N+S+1},
$$
so that the original edge $e = \edge{uv}$ is somewhere along the bar.
Now we continue by arguing as in case (a).
\end{proof}

Let us now establish our main graph theoretic result.

\begin{thm}\label{graphthm}
{\rm (a)} Let $G$ be a digraph without right ends. There exist positive
integers $K$, $S$ and $M$ so that if $k \geq K$, $s \geq S$ and $m$ is a
positive integer multiple of $M$, then a digraph consisting of $k$
disjoint balloons of type $(s,m)$ admits a graph map onto $G$.\\
{\rm (b)} Let $G$ be a digraph without ends. There exist positive integers
$K$, $N$, $S$ and $M$ so that if $k \geq K$, $s \geq S$ and $n, m$ are
positive integer multiples of $N, M$, respectively, then a digraph consisting
of $k$ disjoint dumbbells of type $(n,s,m)$ admits a graph map onto $G$.
\end{thm}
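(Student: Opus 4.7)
The plan is to bootstrap Lemma \ref{graphlemma} from a single balloon/dumbbell to $k$ disjoint ones by covering $G$ one vertex at a time. Since the $k$ balloons (respectively dumbbells) are vertex-disjoint, graph maps defined on each piece combine componentwise with no interference, so the only real work is to uniformize the length and divisibility parameters across all of $G$.

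For part (a), I would first enumerate the vertices of $G$ as $v_1,\ldots,v_p$ and, using the hypothesis that $G$ has no right ends, choose for each $i$ an outgoing edge $e_i = \edge{v_i w_i}$. Applying Lemma \ref{graphlemma}(a) to each $e_i$ yields constants $S_i, M_i$. I would then set
$$
K = p, \qquad S = \max_{1 \leq i \leq p} S_i, \qquad M = \operatorname{lcm}(M_1,\ldots,M_p).
$$
For any $k \geq K$, $s \geq S$ and $m$ a positive multiple of $M$, Lemma \ref{graphlemma}(a) produces, for each $i \in \{1,\ldots,p\}$, a graph map from a balloon of type $(s,m)$ into $G$ whose initial vertex is sent to $v_i$; the hypotheses of the lemma are met because $s \geq S \geq S_i$ and $M_i \mid M \mid m$. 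I would apply this to the first $p$ balloons, so that the image of the $i$-th balloon contains $v_i$, and map each of the remaining $k-p$ balloons into $G$ by any graph map supplied by the same lemma. The disjoint union of these maps is a single graph map from the $k$ balloons to $G$ whose image contains every $v_i$, hence is surjective on vertices.

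Part (b) follows the same template with Lemma \ref{graphlemma}(b) replacing Lemma \ref{graphlemma}(a): for each $v_i$ I would choose an outgoing edge $e_i$, extract the triple $(N_i,S_i,M_i)$ from the lemma, and set $K = p$, $N = \operatorname{lcm}(N_1,\ldots,N_p)$, $S = \max_i S_i$ and $M = \operatorname{lcm}(M_1,\ldots,M_p)$. Each of the first $p$ dumbbells then contributes $v_i$ to the image (in fact hits the entire bar containing $e_i$), while the remaining dumbbells are mapped arbitrarily. The main, and essentially only, obstacle I foresee is this bookkeeping step of finding one pair $(S,M)$, respectively one triple $(N,S,M)$, that accommodates every vertex of $G$ simultaneously; that is handled by taking the maximum of the length thresholds and the least common multiple of the divisibility constraints produced by Lemma \ref{graphlemma}. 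Once the uniformization is in place, surjectivity onto $G$ is automatic, since in a digraph without right ends every vertex is the source of some edge and hence lies in the image of some balloon or dumbbell.
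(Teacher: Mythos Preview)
Your proposal is correct and follows essentially the same strategy as the paper: apply Lemma~\ref{graphlemma} once per item of a finite enumeration, uniformize the resulting constants via $\max$ and $\operatorname{lcm}$, and glue the maps componentwise. The only difference is that the paper enumerates the \emph{edges} $e_1,\ldots,e_K$ of $G$ rather than the vertices; since ``surjective'' for graph maps is defined on vertices only, your vertex-based enumeration works just as well (and gives a potentially smaller $K$).
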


\begin{proof}
We shall prove only case (b), since case (a) is analogous.
Let $e_1,\ldots,e_K$ be the edges of $G$. For each $i = 1,\ldots,K$,
the previous lemma associates to the edge $e_i$ positive integers
$N_i$, $S_i$ and $M_i$. Let $N$ and $M$ be the least common multiple
of all the $N_i$'s and of all the $M_i$'s, respectively, and let $S$
be the max of the $S_i$'s. If $n, m$ are multiples of $N, M$,
respectively, and $s \geq S$, then the previous lemma gives us a graph map
from a dumbbell $D_i$ of type $(n,s,m)$ into $G$ hitting $e_i$
($i = 1,\ldots,K$). Clearly, we may assume that the $D_i$'s are
pairwise disjoint. So, the union $H$ of the $D_i$'s admits a graph map
onto $G$. If we want more than $K$ dumbbells, then it is enough to get
as many disjoint copies of $D_1$ (for instance) as we want.
\end{proof}

We are now ready to establish our approximation theorem.

\begin{thm}\label{approxthm}
{\rm (a)} Let $f \in \contcantor$ and $\epsilon > 0$.
There exist positive integers $K$, $S$ and $M$ so that
if $k \geq K$, $s \geq S$ and $m$ is a positive integer multiple of $M$,
then there are $g \in \contcantor$ and a partition $\p$ of $\cantor$ with
$$
\tilde{d}(f,g) < \epsilon \ \ \ \text{ and } \ \ \ \mesh(\p) < \epsilon,
$$
such that the digraph $\gr(g,\p)$ consists of exactly $k$ disjoint
balloons of type $(s,m)$.\\
{\rm (b)} Let $f \in \homcantor$ and $\epsilon > 0$.
There exist positive integers $K$, $N$, $S$ and $M$ so that if $k \geq K$,
$s \geq S$ and $n, m$ are positive integer multiples of $N, M$, respectively,
then there are  $g \in \homcantor$ and a partition $\p$ of $\cantor$ with
$$
\tilde{d}(f,g) < \epsilon \ \ \ \text{ and } \ \ \ \mesh(\p) < \epsilon,
$$
such that the digraph $\gr(g,\p)$ consists of exactly $k$ disjoint
dumbbells of type $(n,s,m)$.
\end{thm}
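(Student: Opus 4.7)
The plan is to combine the graph-theoretic Theorem~\ref{graphthm} with the approximation Theorem~\ref{approx}. Given $f$ and $\epsilon > 0$, I would first pick a sufficiently fine partition $\q$ of $\cantor$ so that $\gr(f,\q)$ is a legitimate target for Theorem~\ref{graphthm} and so that the error estimate in Theorem~\ref{approx}(c) beats $\epsilon$. Then Theorem~\ref{graphthm} supplies constants $K, S, M$ (and $N$ in case (b)) together with a surjective graph map $\phi : H \to \gr(f,\q)$ from a suitable disjoint union of balloons (resp.\ dumbbells) $H$, and Theorem~\ref{approx} converts $\phi$ into the required $g$ and refinement $\p$.

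First I would use uniform continuity of $f$ to choose $\delta > 0$ such that $d(\sigma,\tau) < \delta$ forces $d(f(\sigma),f(\tau)) < \epsilon/2$, and then pick $\q$ (for instance some $\bn$) with $\mesh(\q) < \min\{\delta, \epsilon/2\}$. This guarantees $\mesh(f(\q)) \leq \epsilon/2$, hence $\mesh(\q) + \mesh(f(\q)) < \epsilon$. In case (a), $\gr(f,\q)$ has no right ends; in case (b), since $f \in \homcantor$ is surjective, $\gr(f,\q)$ has no ends at all. In either setting Theorem~\ref{graphthm} applies to $G := \gr(f,\q)$ and produces the desired constants $K, S, M$ (and $N$).

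Now, for any $k, s, m$ (and $n$) satisfying the advertised size and divisibility constraints, Theorem~\ref{graphthm} yields a digraph $H$ consisting of exactly $k$ disjoint balloons of type $(s,m)$ (resp.\ $k$ disjoint dumbbells of type $(n,s,m)$) together with a surjective graph map $\phi : H \to G$. Note that $H$ has no right ends in case (a) and no ends in case (b), so Theorem~\ref{approx} applies: it produces a refinement $\nu : \p \to \q$, a bijection $\psi$ from $\p$ onto the vertex set of $H$, and a map $g \in \contcantor$ (or $g \in \homcantor$ in case (b)) such that $\psi$ is a digraph isomorphism $\gr(g,\p) \to H$ and
\[
\tilde{d}(f,g) \leq \mesh(\q) + \mesh(f(\q)) < \epsilon.
\]
Since $\p$ refines $\q$, also $\mesh(\p) \leq \mesh(\q) < \epsilon$. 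Identifying $\p$ with the vertex set of $H$ via $\psi$ presents $\gr(g,\p)$ in the required form.

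The genuinely hard work has been absorbed into the two preparatory theorems: Theorem~\ref{graphthm} handles the combinatorics of producing balloons/dumbbells that surject onto an arbitrary finite digraph, and Theorem~\ref{approx} handles the realization of an abstract digraph as $\gr(g,\p)$ with quantitative control. What remains here is just the pedestrian bookkeeping of choosing $\q$ fine enough that both $\mesh(\q)$ and $\mesh(f(\q))$ are small, which is exactly where uniform continuity of $f$ enters. The only point worth double-checking is that $\gr(f,\q)$ has no left ends in case (b), so that $H$ may be chosen without ends and hence $g$ may be taken in $\homcantor$; this is exactly the surjectivity remark already recorded in Section~2.
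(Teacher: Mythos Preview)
Your proposal is correct and follows essentially the same route as the paper: choose $\q$ fine enough via uniform continuity so that $\mesh(\q) + \mesh(f(\q)) < \epsilon$, apply Theorem~\ref{graphthm} to $\gr(f,\q)$ to obtain the constants and the surjective graph map from the desired balloon/dumbbell digraph, and then invoke Theorem~\ref{approx} to realize it as $\gr(g,\p)$ with the required distance bound. The paper's proof is exactly this argument, with slightly terser bookkeeping on the choice of~$\q$.
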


\begin{proof} Fix $f \in \contcantor$ (resp.\ $f \in \homcantor$)
and $\epsilon > 0$. Using the uniform continuity of $f$, we choose a
partition $\q$ of $\cantor$ such that
$$
\mesh(\q) < \frac{\epsilon}{2} \ \ \ \text{ and } \ \ \
\mesh(f(\q)) < \frac{\epsilon}{2}\cdot
$$
As $\gr(f,\q)$ is a digraph without right ends (resp.\ a digraph without
ends), we can associate to $\gr(f,\q)$ positive integers $K$, $S$ and $M$
(resp.\ $K$, $N$, $S$ and $M$) so that the property described in part (a)
(resp.\ in part (b)) of Theorem~\ref{graphthm} holds. Let $k \geq K$,
$s \geq S$ and $m$ be a positive integer multiple of $M$ (resp.\
$n, m$ be positive integer multiples of $N, M$, respectively).
Then, a digraph $G$ consisting of $k$ disjoint balloons of type $(s,m)$
(resp.\ of $k$ disjoint dumbells of type $(n,s,m)$) admits a graph map $\phi$
onto $\gr(f,\q)$. By Theorem~\ref{approx}, there exist a refinement $\p$ of
$\q$ and $g \in \contcantor$ (resp.\ $g \in \homcantor$) so that
$\gr(g,\p)$ is isomorphic to $G$ and
$$
\tilde{d}(f,g) \leq \mesh(\q) + \mesh(f(\q)) < \epsilon.
$$
This completes the proof.
\end{proof}


\section{Conjugacy Relation}

In this section we present some characterizations of when two continuous
maps of the Cantor space are topologically conjugate to each other.
In particular, we can apply these characterizations to homeomorphisms
of the Cantor space.

We begin by recalling the definition of topological conjugacy.
Suppose $f$ and $g$ are self-maps of spaces $X$ and $Y$, respectively.
We say that $f$ and $g$ are topologically conjugate if there is a
homeomorphism $h$ from $X$ onto $Y$ such that $f = h^{-1}gh$.
We note that topological conjugacy is an equivalence relation.
If $f,g \in \contcantor$ and $h \in \homcantor$, then we simply say that
$f$ and $g$ are {\em conjugates}.

In order to state our next result, let us introduce some terminology.

If $(\p_n)$ is a sequence of partitions of $\cantor$, then we say that
$(\p_n)$ is {\em null} whenever $\mesh(\p_n) \to 0$, and we say that
$(\p_n)$ is {\em decreasing} whenever $\p_{n+1}$ is a refinement of $\p_n$
for every $n \in \nat$. Note that every null sequence of partitions of
$\cantor$ has a decreasing (and null) subsequence.

Suppose $f, g \in \contcantor$, $(\p_n)$ and $(\q_n)$ are decreasing null
sequences of partitions of $\cantor$ and $(\nu_n)$ is a sequence of
isomorphisms
$$
\nu_n : \gr(f,\p_n) \to \gr(g,\q_n).
$$
For each $a \in \p_n$, we define
$$
\nu_m(a) = \bigcup \{\nu_m(b) : b \in \p_m \text{ and } b \subset a\}
  \ \ \ (m \geq n)
$$
and
$$
\tilde{\nu}_n(a) = \bigcup_{m \geq n} \nu_m(a).
$$
We say that the sequence $(\nu_n)$ {\em commutes with refinements}
if the diagrams
$$
\xymatrix{\p_n \ar[rr]^{\nu_n}                      & & \q_n \\
          \p_{n+1} \ar[rr]^{\nu_{n+1}} \ar[u]^{i_n} & & \q_{n+1} \ar[u]_{j_n}}
$$
are commutative, where $i_n$ and $j_n$ denote the refinement maps.
We have the following characterizations of this notion:

\begin{prop}
With the above notations, the following assertions are equivalent:
\begin{itemize}
\item [(i)]   $(\nu_n)$ commutes with refinements;
\item [(ii)]  $\nu_m(a) = \nu_n(a)$ whenever $m \geq n$ and $a \in \p_n$;
\item [(iii)] $\tilde{\nu}_n(a) = \nu_n(a)$ for every $n \in \nat$
      and $a \in \p_n$.
\end{itemize}
\end{prop}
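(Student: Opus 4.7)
The plan is to establish the cyclic implications (i) $\Rightarrow$ (ii) $\Rightarrow$ (iii) $\Rightarrow$ (i). Here (ii) $\Rightarrow$ (iii) is immediate from the definition $\tilde{\nu}_n(a) = \bigcup_{m \geq n} \nu_m(a)$, since if every term in the union equals $\nu_n(a)$ then so does the union. The real content is in (i) $\Rightarrow$ (ii) and (iii) $\Rightarrow$ (i).

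For (i) $\Rightarrow$ (ii) I would induct on $m - n$, the base case $m = n$ being trivial. For the step from $m-1$ to $m$, fix $a \in \p_n$ and group the elements of $\{b \in \p_m : b \subset a\}$ according to their parents $c \in \p_{m-1}$ (so $c \subset a$, $b \subset c$). Commutativity at level $m-1$ gives $j_{m-1}(\nu_m(b)) = \nu_{m-1}(i_{m-1}(b)) = \nu_{m-1}(c)$, so each $\nu_m(b)$ is contained in $\nu_{m-1}(c)$. Thus $\nu_m$ restricts to an injection from $i_{m-1}^{-1}(c)$ into $j_{m-1}^{-1}(\nu_{m-1}(c))$; because $\nu_m$ and $\nu_{m-1}$ are both bijections on vertex sets and both $\p_m, \q_m$ decompose as disjoint unions over their respective refinement fibers, this injection must be a bijection. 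Consequently $\bigcup \{\nu_m(b) : b \subset c\} = \nu_{m-1}(c)$, and taking the union over all $c \subset a$ yields $\nu_m(a) = \nu_{m-1}(a)$. The inductive hypothesis finishes the step.

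For (iii) $\Rightarrow$ (i), fix $a \in \p_{n+1}$ and set $c = i_n(a) \in \p_n$, so $a \subset c$. Since $a \in \p_{n+1}$ with $a \subset c$, the definition of $\nu_{n+1}(c)$ gives $\nu_{n+1}(a) \subset \nu_{n+1}(c) \subset \tilde{\nu}_n(c) = \nu_n(c)$, where the last equality uses (iii). Because $\nu_n(c) \in \q_n$ and $\nu_{n+1}(a) \in \q_{n+1}$, this containment forces $j_n(\nu_{n+1}(a)) = \nu_n(c) = \nu_n(i_n(a))$, which is exactly commutativity of the square at level $n$.

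The main obstacle is the bijectivity claim in the inductive step of (i) $\Rightarrow$ (ii): one must verify that $\nu_m$ induces a bijection between corresponding refinement fibers, not merely an injection. This is where the hypothesis that each $\nu_n$ is a graph \emph{isomorphism} (rather than a plain graph map) is essential, and a counting argument over the fibers, combined with commutativity, resolves it cleanly. The rest of the proof is a matter of unwinding the definitions of $\nu_m(a)$ and $\tilde{\nu}_n(a)$ carefully.
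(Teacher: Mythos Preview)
Your proof is correct. The implications (ii) $\Rightarrow$ (iii) and (iii) $\Rightarrow$ (i) match the paper's argument essentially verbatim. For (i) $\Rightarrow$ (ii), however, you take a different route: you induct on $m-n$ and use a fiber-counting argument (relying on finiteness of the partitions and bijectivity of $\nu_m$, $\nu_{m-1}$) to show that $\nu_m$ restricts to a bijection $i_{m-1}^{-1}(c) \to j_{m-1}^{-1}(\nu_{m-1}(c))$, whence $\nu_m(c) = \nu_{m-1}(c)$. The paper instead works directly with the composite square from level $m$ to level $n$: the inclusion $\nu_m(a) \subset \nu_n(a)$ is read off immediately, and the reverse inclusion is obtained by a pointwise chase---given $\sigma \in \nu_n(a)$, one locates $c \in \q_m$ containing $\sigma$, pulls it back via surjectivity of $\nu_m$ to some $b \in \p_m$, and uses injectivity of $\nu_n$ to force $b \subset a$. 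The paper's argument is slightly slicker in that it avoids induction and the cardinality bookkeeping, but your approach has the virtue of making explicit that commutativity forces $\nu_m$ to respect the refinement block structure, which is a natural structural observation.
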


\begin{proof}
(i) $\Rightarrow$ (ii): Suppose $m \geq n$ and $a \in \p_n$.
Since $(\nu_n)$ commutes with refinements, the diagram
$$
\xymatrix{\p_n \ar[rr]^{\nu_n}        & & \q_n \\
          \p_m \ar[rr]^{\nu_m} \ar[u] & & \q_m \ar[u]}
$$
is commutative, where the up arrows indicate the refinement maps.
Consequently,
$$
\nu_m(a) \subset \nu_n(a).
$$
Conversely, take $\sigma \in \nu_n(a)$ and let $c \in \q_m$ be such that
$\sigma \in c$. Since $\nu_m$ is surjective, there exist $b \in \p_m$
such that $\nu_m(b) = c$. Let $a' \in \p_n$ be such that $b \subset a'$.
By what we have just seen, $\nu_m(a') \subset \nu_n(a')$. Hence,
$$
\sigma \in c = \nu_m(b) \subset \nu_m(a') \subset \nu_n(a').
$$
Since $\sigma \in \nu_n(a)$ and $\nu_n$ is injective, $a' = a$.
Thus, $\sigma \in \nu_m(a)$.

\smallskip
\noindent (ii) $\Rightarrow$ (iii): Obvious.

\smallskip
\noindent (iii) $\Rightarrow$ (i): Let $b \in \p_{n+1}$ and put
$a = i_n(b) \in \p_n$. Then
$$
\nu_{n+1}(b) \subset \nu_{n+1}(a) \subset \tilde{\nu}_n(a) = \nu_n(a),
$$
by hypothesis. Hence, $j_n(\nu_{n+1}(b)) = \nu_n(a) = \nu_n(i_n(b))$,
as was to be shown.
\end{proof}

We say that the sequence $(\nu_n)$ {\em asymptotically commutes with
refinements} if
$$
\lim_{n \to \infty} \mesh(\{\tilde{\nu}_n(a) : a \in \p_n\}) = 0.
$$
It follows from the previous proposition that if $(\nu_n)$ commutes with
refinements, then both $(\nu_n)$ and $(\nu_n^{-1})$ asymptotically
commute with refinements.

\begin{thm}\label{conjugacyrelation}
Let $f, g \in \contcantor$. Then the following assertions are equivalent:
\begin{itemize}

\item [(i)] $f$ and $g$ are conjugates;

\item [(ii)] There are decreasing null sequences $(\p_n)$ and $(\q_n)$
      of partitions of $\cantor$ and isomorphisms
      $\nu_n : \gr(f,\p_n) \to \gr(g,\q_n)$ so that the sequence $(\nu_n)$
      commutes with refinements;

\item [(iii)] There are decreasing null sequences $(\p_n)$ and $(\q_n)$
      of partitions of $\cantor$ and isomorphisms
      $\nu_n : \gr(f,\p_n) \to \gr(g,\q_n)$ so that both $(\nu_n)$ and
      $(\nu_n^{-1})$ asymptotically commute with refinements.

\end{itemize}
\end{thm}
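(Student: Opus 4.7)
The plan is to prove the equivalences via the cycle (i)$\Rightarrow$(ii)$\Rightarrow$(iii)$\Rightarrow$(i).

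For (i)$\Rightarrow$(ii), given a conjugacy $h \in \homcantor$ with $g = hfh^{-1}$, pick any decreasing null sequence $(\p_n)$ of partitions of $\cantor$ and set $\q_n = \{h(a) : a \in \p_n\}$ (still decreasing and null, since $h$ is a uniform homeomorphism) and $\nu_n(a) := h(a)$. The identity $gh = hf$ turns the condition $f(a) \cap b \neq \emptyset$ into $g(h(a)) \cap h(b) \neq \emptyset$, so $\nu_n$ is a digraph isomorphism; and $b \subset a \Rightarrow h(b) \subset h(a)$ gives commutation with refinements.

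For (ii)$\Rightarrow$(iii), the preceding proposition gives $\tilde{\nu}_n(a) = \nu_n(a)$ when $(\nu_n)$ commutes with refinements, so $\mesh\{\tilde{\nu}_n(a) : a \in \p_n\} = \mesh(\q_n) \to 0$. The commutation identity $\nu_n \circ i_n = j_n \circ \nu_{n+1}$ is formally symmetric: substituting $b = \nu_{n+1}^{-1}(d)$ shows that $(\nu_n^{-1})$ also commutes with refinements, and the preceding proposition again gives asymptotic commutation (now with bound $\mesh(\p_n)$).

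The heart of the argument is (iii)$\Rightarrow$(i). For each $\sigma \in \cantor$, let $a_n(\sigma) \in \p_n$ denote the unique element containing $\sigma$. A direct unwinding of definitions shows that if $b \in \p_m$, $a \in \p_n$, $m \geq n$, and $b \subset a$, then $\tilde{\nu}_m(b) \subset \tilde{\nu}_n(a)$; hence the closed sets $F_n(\sigma) := \overline{\tilde{\nu}_n(a_n(\sigma))}$ form a nested sequence of nonempty compacta with $\diam F_n(\sigma) \to 0$ by asymptotic commutation. Define $h(\sigma)$ to be the unique point in $\bigcap_n F_n(\sigma)$, and analogously define $h' : \cantor \to \cantor$ using $\mu_n := \nu_n^{-1}$, the partitions $\q_n$, and the elements $c_n(\tau) \in \q_n$ containing $\tau$. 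Continuity of $h$ and of $h'$ is immediate, since $a_n(\sigma') = a_n(\sigma)$ puts $h(\sigma')$ inside the same small set $F_n(\sigma)$.

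The decisive step is $h' \circ h = \mathrm{id}$. Fix $\sigma$, set $\tau = h(\sigma)$, and for each $N$ let $c_N \in \q_N$ contain $\tau$. Since $\cantor$ is zero-dimensional and $\q_N$ is a finite disjoint clopen cover, there is $\delta_N > 0$ such that any set of diameter less than $\delta_N$ meeting $c_N$ is contained in $c_N$; taking $n$ large enough that $\diam F_n(\sigma) < \delta_N$ gives $F_n(\sigma) \subset c_N$, and in particular $\nu_n(a_n(\sigma)) \subset c_N$, whence $a_n(\sigma) = \mu_n(\nu_n(a_n(\sigma))) \subset \mu_n(c_N) \subset \tilde{\mu}_N(c_N)$, so $\sigma \in \overline{\tilde{\mu}_N(c_N)}$ for every $N$ and therefore $h'(\tau) = \sigma$. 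The symmetric argument (using asymptotic commutation of $(\nu_n^{-1})$) yields $h \circ h' = \mathrm{id}$, so $h \in \homcantor$. For the conjugacy equation, $\edge{a_n(\sigma) a_n(f(\sigma))}$ is an edge of $\gr(f,\p_n)$ for every $n$, hence via the isomorphism $\nu_n$ there exists $x_n \in \nu_n(a_n(\sigma))$ with $g(x_n) \in \nu_n(a_n(f(\sigma)))$; since $\nu_n(a_n(\sigma)) \subset F_n(\sigma) \to \{h(\sigma)\}$ and similarly for $f(\sigma)$, continuity of $g$ gives $g(h(\sigma)) = h(f(\sigma))$. The principal obstacle is precisely this bijectivity step: asymptotic commutation only locates $h(\sigma)$ in the closure of $\tilde{\nu}_n(a_n(\sigma))$, not in the single vertex $\nu_n(a_n(\sigma))$, and the zero-dimensional gap argument above is what converts this qualitative proximity into the decisive inclusion $\nu_n(a_n(\sigma)) \subset c_N$ needed to recover $\sigma$ through $\nu_n^{-1}$.
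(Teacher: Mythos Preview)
Your proof is correct and follows the same $(\mathrm{i})\Rightarrow(\mathrm{ii})\Rightarrow(\mathrm{iii})\Rightarrow(\mathrm{i})$ cycle as the paper, with essentially identical arguments for the first two implications. The difference lies in the construction of the conjugacy in $(\mathrm{iii})\Rightarrow(\mathrm{i})$. The paper chooses, for each $n$, an auxiliary homeomorphism $h_n\in\homcantor$ with $h_n(a)=\nu_n(a)$ for every $a\in\p_n$, shows via asymptotic commutation that $(h_n)$ and $(h_n^{-1})$ are Cauchy in $\contcantor$, and obtains $h$ and its inverse as the respective limits; invertibility then drops out of $h_n\circ h_n^{-1}=\mathrm{id}$. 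You instead define $h(\sigma)$ directly as the unique point of $\bigcap_n \overline{\tilde\nu_n(a_n(\sigma))}$ and prove $h'\circ h=\mathrm{id}$ by the clopen-gap argument that forces $\nu_n(a_n(\sigma))\subset c_N$ and hence $\sigma\in\tilde\mu_N(c_N)$. Your route avoids the (easy but extra) step of realizing each $\nu_n$ by a homeomorphism, at the cost of having to argue invertibility by hand; the paper's route makes invertibility automatic by symmetry of the Cauchy estimates. The final verification of $gh=hf$ is the same in both: use that $\nu_n$ carries the edge $\edge{a_n(\sigma)a_n(f(\sigma))}$ to an edge of $\gr(g,\q_n)$ and pass to the limit.
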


\begin{proof}
(i) $\Rightarrow$ (ii): Suppose $f = h^{-1}gh$ for a certain
$h \in \homcantor$. For each $n \in \nat$, let $\p_n = \bn$ and
$\q_n = \{h(a) : a \in \bn\}$. We now observe that for each $n \in \nat$,
$\gr(f,\p_n)$ is isomorphic to $\gr(g,\q_n)$ by the map
$\nu_n : a \in \p_n \mapsto h(a) \in \q_n$. Indeed,
\begin{align*}
\edge{ab} \in \gr(f,\p_n) & \iff f(a) \cap b \neq \emptyset\\
  &\iff hf(a) \cap h(b) \neq \emptyset\\
  & \iff gh(a) \cap h(b) \neq \emptyset\\
  & \iff \edge{h(a)h(b)} \in \gr (g,\q_n).
\end{align*}
Moreover, it is clear that the sequence $(\nu_n)$ commutes with refinements.

\smallskip
\noindent (ii) $\Rightarrow$ (iii): Obvious.

\smallskip
\noindent (iii) $\Rightarrow$ (i): For each $n \in \nat$, we choose an
$h_n \in \homcantor$ such that
$$
h_n(a) = \nu_n(a) \ \ \text{ for every } a \in \p_n.
$$
Then
$$
h_n^{-1}(c) = \nu_n^{-1}(c) \ \ \text{ for every } c \in \q_n.
$$
We shall prove that $(h_n)$ is a Cauchy sequence in $\contcantor$.
For this purpose, let us fix $\epsilon > 0$. Since $(\nu_n)$ asymptotically
commutes with refinements, there exists $n_0 \in \nat$ such that
$$
\mesh(\{\tilde{\nu}_n(a) : a \in \p_n\}) < \epsilon \ \ \text{ whenever }
  n \geq n_0.
$$
Take $\sigma \in \cantor$ and $m \geq n \geq n_0$. Let $a \in \p_n$ and
$b \in \p_m$ be such that $\sigma \in a$ and $\sigma \in b$. Then,
$$
h_n(\sigma) \in \nu_n(a) \subset \tilde{\nu}_n(a) \ \ \ \text{ and } \ \ \
h_m(\sigma) \in \nu_m(b) \subset \nu_m(a) \subset \tilde{\nu}_n(a),
$$
and so
$$
d(h_m(\sigma),h_n(\sigma)) < \epsilon.
$$
By completeness, the sequence $(h_n)$ must converge to a function
$h$ in $\contcantor$. Since $(\nu_n^{-1})$ also asymptotically commutes
with refinements, we may apply the same argument to the sequence
$(h_n^{-1})$ and conclude that this sequence converges to a function
$t$ in $\contcantor$. Since $h_n \circ h_n^{-1} = h_n^{-1} \circ h_n = I$
(the identity map of $\cantor$) for every $n \in \nat$,
$h \circ t = t \circ h = I$. Thus, $h \in \homcantor$.

Now, given $\sigma \in \cantor$ and $n \in \nat$, let $a, b \in \p_n$ be
such that $\sigma \in a$ and $f(\sigma) \in b$.
Then, $\edge{ab} \in \gr(f,\p_n)$, which implies that
$\edge{\nu_n(a)\nu_n(b)} \in \gr(g,\q_n)$, that is,
$$
g(\nu_n(a)) \cap \nu_n(b) \neq \emptyset.
$$
Since $h_nf(\sigma) \in \nu_n(b)$ and $gh_n(\sigma) \in g(\nu_n(a))$,
we obtain
$$
d(h_nf(\sigma),gh_n(\sigma)) \leq \mesh(\q_n) + \mesh(g(\q_n)).
$$
Hence, by letting $n \to \infty$, we conclude that $hf = gh$, that is,
$f = h^{-1}gh$.
\end{proof}

In order to state our next characterizations of the conjugacy relation,
we need to introduce some further terminology.

Suppose $f, g \in \contcantor$, $(\p_n)$ and $(\q_n)$ are decreasing null
sequences of partitions of $\cantor$ and $(\nu_n)$ is a sequence of
surjective graph maps with
$$
\nu_n : \gr(f,\p_n) \to \gr(g,\q_n) \ \ \text{ for odd } n
$$
and
$$
\nu_n : \gr(g,\q_n) \to \gr(f,\p_n) \ \ \text{ for even } n.
$$
For each odd $n$ and each $a \in \p_n$, we define
$$
\nu_m(a) = \bigcup \{\nu_m(b) : b \in \p_m \text{ and } b \subset a\}
  \ \ \ (m \geq n, m \text{ odd}),
$$
$$
\nu_m^{-1}(a) = \bigcup \{c : c \in \q_m \text{ and } \nu_m(c) \subset a\}
  \ \ \ (m > n, m \text{ even})
$$
and
$$
\ov{\nu}_n(a) = \bigcup \{\nu_m(a) : m \geq n, m \text{ odd}\} \cup
                \bigcup \{\nu_m^{-1}(a) : m > n, m \text{ even}\}.
$$
Analogously, we define $\nu_m(c)$ ($m \geq n$, $m$ even), $\nu_m^{-1}(c)$
($m > n$, $m$ odd) and $\ov{\nu}_n(c)$ for even $n$ and $c \in \q_n$.
We say that the sequence $(\nu_n)$ {\em commutes with refinements}
if the diagrams
$$
\xymatrix{\p_n \ar[rr]^{\nu_n} & & \q_n \\
       \p_{n+1} \ar[u]^{i_n} & & \q_{n+1} \ar[ll]_{\nu_{n+1}} \ar[u]_{j_n}}
\hspace{.5in}
\xymatrix{\p_n & & \q_n \ar[ll]_{\nu_n} \\
       \p_{n+1} \ar[u]^{i_n} \ar[rr]^{\nu_{n+1}} & & \q_{n+1} \ar[u]_{j_n}}
$$

\vspace{-.2in}

$$
n \text{ odd} \hspace{1.6in} n \text{ even}
$$
are commutative, where $i_n$ and $j_n$ denote the refinement maps.
We have the following characterizations of this concept:

\begin{prop}
With the above notations, the following assertions are equivalent:
\begin{itemize}

\item [(i)] $(\nu_n)$ commutes with refinements;

\item [(ii)] The following inclusions hold:
     \begin{itemize}
     \item [$\bullet$] $\nu_m(a) \subset \nu_n(a)$ whenever
           $n$ is odd, $m$ is odd, $m \geq n$ and $a \in \p_n$.
     \item [$\bullet$] $\nu_m^{-1}(a) \subset \nu_n(a)$ whenever
           $n$ is odd, $m$ is even, $m > n$ and $a \in \p_n$.
     \item [$\bullet$] $\nu_m(c) \subset \nu_n(c)$ whenever
           $n$ is even, $m$ is even, $m \geq n$ and $c \in \q_n$.
     \item [$\bullet$] $\nu_m^{-1}(c) \subset \nu_n(c)$ whenever
           $n$ is even, $m$ is odd, $m > n$ and $c \in \q_n$.
     \end{itemize}

\item [(iii)] The following equalities hold:
      \begin{itemize}
      \item [$\bullet$] $\ov{\nu}_n(a) = \nu_n(a)$
            whenever $n$ is odd and $a \in \p_n$.
      \item [$\bullet$] $\ov{\nu}_n(c) = \nu_n(c)$
            whenever $n$ is even and $c \in \q_n$.
      \end{itemize}

\end{itemize}
\end{prop}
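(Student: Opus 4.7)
The plan is to mirror the three-step structure of the previous proposition, with extra bookkeeping to accommodate the fact that $\nu_m$ alternates between $\p_m \to \q_m$ (odd $m$) and $\q_m \to \p_m$ (even $m$). By the obvious symmetry obtained by interchanging the roles of $\p$ and $\q$ and shifting parities by one, I only need to prove the two bullets of (ii) and the first equality of (iii) that start from $n$ odd and $a \in \p_n$. The starting point is the elementwise content of the two commutative diagrams, which I would first rewrite as the identities
\[
j_n = \nu_n \circ i_n \circ \nu_{n+1} \ (n \text{ odd}), \qquad
i_n = \nu_n \circ j_n \circ \nu_{n+1} \ (n \text{ even}).
\]

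For (i) $\Rightarrow$ (ii), fix $n$ odd and $a \in \p_n$, and argue by induction on $k \geq 0$ that $\nu_{n+2k}(a) \subset \nu_n(a)$. The base case $k=0$ is immediate. For the step, take $b \in \p_{n+2k+2}$ with $b \subset a$, set $b' = i_{n+2k+1}(b) \in \p_{n+2k+1}$ and $a'' = i_{n+2k}(b') \in \p_{n+2k}$, so $b \subset b' \subset a'' \subset a$. Applying the even commutativity at index $n+2k+1$ gives $b' = \nu_{n+2k+1}(j_{n+2k+1}(\nu_{n+2k+2}(b)))$; then applying the odd commutativity at index $n+2k$ to $j_{n+2k+1}(\nu_{n+2k+2}(b))$ yields
\[
j_{n+2k}\bigl(j_{n+2k+1}(\nu_{n+2k+2}(b))\bigr) = \nu_{n+2k}(a'').
\]
Since $j_{n+2k}$ and $j_{n+2k+1}$ are refinement maps, this forces $\nu_{n+2k+2}(b) \subset \nu_{n+2k}(a'') \subset \nu_{n+2k}(a) \subset \nu_n(a)$, the last inclusion by the inductive hypothesis. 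For the preimage inclusion, given even $m > n$ and $c \in \q_m$ with $\nu_m(c) \subset a$, I would set $c' = j_{m-1}(c) \in \q_{m-1}$ and apply the odd commutativity at $m-1$ to identify $c' = \nu_{m-1}(i_{m-1}(\nu_m(c)))$; since $i_{m-1}(\nu_m(c))$ is an element of $\p_{m-1}$ contained in $a$, this places $c'$ inside $\nu_{m-1}(a)$, which is in turn contained in $\nu_n(a)$ by the image inclusion just proved, so $c \subset c' \subset \nu_n(a)$ as required.

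For (ii) $\Rightarrow$ (iii), the $m=n$ term of the first union in the definition of $\ov{\nu}_n(a)$ equals $\nu_n(a)$, so one inclusion is automatic, while (ii) says that every other term of both unions is contained in $\nu_n(a)$, giving the reverse inclusion. For (iii) $\Rightarrow$ (i) with $n$ odd, fix $c \in \q_{n+1}$ and set $a = i_n(\nu_{n+1}(c)) \in \p_n$; then $\nu_{n+1}(c) \subset a$, so $c \subset \nu_{n+1}^{-1}(a) \subset \ov{\nu}_n(a) = \nu_n(a)$. Since $\nu_n(a) \in \q_n$ and $j_n(c)$ is the unique element of $\q_n$ containing $c$, this forces $j_n(c) = \nu_n(a) = \nu_n(i_n(\nu_{n+1}(c)))$, which is the required commutativity. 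The main obstacle is the inductive step in (i) $\Rightarrow$ (ii): the alternation of directions forces one to chain together commutativities at two consecutive indices (one odd, one even) in order to advance from $\nu_{n+2k}$ to $\nu_{n+2k+2}$ within the same parity class and to cross over between images and preimages, but once this chaining is set up, the remaining implications reduce to the bookkeeping pattern of the previous proposition.
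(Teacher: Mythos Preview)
Your proof is correct and follows essentially the same route as the paper. The paper's (i)~$\Rightarrow$~(ii) is stated more tersely---it simply asserts that composing the defining squares yields commutative rectangles
\[
\xymatrix{\p_n \ar[rr]^{\nu_n}        & & \q_n \\
          \p_m \ar[rr]^{\nu_m} \ar[u] & & \q_m \ar[u]}
\hspace{.5in}
\xymatrix{\p_n \ar[rr]^{\nu_n} & & \q_n \\
          \p_k \ar[u]          & & \q_k \ar[ll]_{\nu_k} \ar[u]}
\]
and reads off the inclusions directly---whereas you unpack this composition as an explicit two-step induction chaining the odd and even squares; the content is the same. Your (ii)~$\Rightarrow$~(iii) and (iii)~$\Rightarrow$~(i) are essentially verbatim the paper's arguments.
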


\begin{proof}
(i) $\Rightarrow$ (ii): Let $n, m, k \in \nat$ be such that $n$ and $m$
are odd, $k$ is even, $m \geq n$ and $k > n$. Since $(\nu_n)$ commutes
with refinements, the diagrams
$$
\xymatrix{\p_n \ar[rr]^{\nu_n}        & & \q_n \\
          \p_m \ar[rr]^{\nu_m} \ar[u] & & \q_m \ar[u]}
\hspace{.5in}
\xymatrix{\p_n \ar[rr]^{\nu_n} & & \q_n \\
          \p_k \ar[u]          & & \q_k \ar[ll]_{\nu_k} \ar[u]}
$$
are commutative, where the up arrows indicate the refinement maps.
This implies that $\nu_m(a) \subset \nu_n(a)$ and
$\nu_k^{-1}(a) \subset \nu_n(a)$ for every $a \in \p_n$,
which proves the first two inclusions in (ii).
The other two inclusions are proved in a similar way.

\smallskip
\noindent (ii) $\Rightarrow$ (iii): Obvious.

\smallskip
\noindent (iii) $\Rightarrow$ (i): Assume $n$ odd and let us prove
that
$$
j_n = \nu_n \circ i_n \circ \nu_{n+1}.
$$
Take $c \in \q_{n+1}$ and put $b = \nu_{n+1}(c) \in \p_{n+1}$ and
$a = i_n(b) \in \p_n$.
Since $n+1$ is even and $\nu_{n+1}(c) = b \subset a$, we have that
$$
c \subset (\nu_{n+1})^{-1}(a) \subset \ov{\nu}_n(a) = \nu_n(a).
$$
Thus, $j_n(c) = \nu_n(a)$, that is, $j_n(c) = \nu_n(i_n(\nu_{n+1}(c)))$.
For $n$ even the proof is analogous.
\end{proof}

We say that the sequence $(\nu_n)$ {\em asymptotically commutes with
refinements} if
$$
\lim_{n \to \infty} \mesh(\{\ov{\nu}_{2n-1}(a) : a \in \p_{2n-1}\}) = 0
$$
and
$$
\lim_{n \to \infty} \mesh(\{\ov{\nu}_{2n}(c) : c \in \q_{2n}\}) = 0.
$$
By the previous proposition, if $(\nu_n)$ commutes with refinements,
then $(\nu_n)$ asymptotically commutes with refinements.

\begin{thm}\label{ConjugacyRelation}
Let $f, g \in \contcantor$. Then the following assertions are equivalent:
\begin{itemize}

\item [(i)] $f$ and $g$ are conjugates;

\item [(ii)] There are decreasing null sequences $(\p_n)$ and $(\q_n)$
      of partitions of $\cantor$ and surjective graph maps
      $\nu_{2n-1} : \gr(f,\p_{2n-1}) \to \gr(g,\q_{2n-1})$ and
      $\nu_{2n} : \gr(g,\q_{2n}) \to \gr(f,\p_{2n})$
      so that the sequence $(\nu_n)$ commutes with refinements;

\item [(iii)] There are decreasing null sequences $(\p_n)$ and $(\q_n)$
      of partitions of $\cantor$ and surjective graph maps
      $\nu_{2n-1} : \gr(f,\p_{2n-1}) \to \gr(g,\q_{2n-1})$ and
      $\nu_{2n} : \gr(g,\q_{2n}) \to \gr(f,\p_{2n})$
      so that the sequence $(\nu_n)$ asymptotically commutes with refinements.

\end{itemize}
\end{thm}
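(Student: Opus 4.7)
My plan is to prove the cycle (i) $\Rightarrow$ (ii) $\Rightarrow$ (iii) $\Rightarrow$ (i), paralleling the proof of Theorem~\ref{conjugacyrelation}. The implication (i) $\Rightarrow$ (ii) is handled exactly as there: given $f = h^{-1}gh$ with $h \in \homcantor$, set $\p_n = \bn$ and $\q_n = \{h(a) : a \in \bn\}$, and let $\nu_n$ be the bijection $a \mapsto h(a)$ for odd $n$ and $c \mapsto h^{-1}(c)$ for even $n$. The same direct computation shows these are graph isomorphisms (hence, a fortiori, surjective graph maps), and commutation with refinements is immediate from the definitions. The implication (ii) $\Rightarrow$ (iii) is immediate from the previous proposition.

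The real work is in (iii) $\Rightarrow$ (i). Since the $\nu_n$ are only surjective, the approximating-homeomorphism idea of Theorem~\ref{conjugacyrelation} is unavailable, so I construct $h$ pointwise. For each odd $n$, I choose a map $\sigma \mapsto \alpha_n(\sigma)$ which is constant on each $a \in \p_n$ with $\alpha_n(\sigma) \in \nu_n(a_n(\sigma))$, where $a_n(\sigma) \in \p_n$ denotes the element containing $\sigma$. For $m \geq n$ both odd, the inclusion $a_m(\sigma) \subset a_n(\sigma)$ yields
$$\nu_m(a_m(\sigma)) \subset \nu_m(a_n(\sigma)) \subset \ov{\nu}_n(a_n(\sigma)),$$
so $\alpha_m(\sigma)$ and $\alpha_n(\sigma)$ both lie in $\ov{\nu}_n(a_n(\sigma))$. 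Asymptotic commutativity gives $\mesh(\{\ov{\nu}_n(a) : a \in \p_n\}) \to 0$, so $(\alpha_n(\sigma))_{n \text{ odd}}$ is uniformly Cauchy and the limit $h(\sigma) = \lim_{n \text{ odd}} \alpha_n(\sigma)$ defines a continuous map $h : \cantor \to \cantor$. Symmetrically, using the even indices, I set $h'(\tau) = \lim_{m \text{ even}} \beta_m(\tau)$ with $\beta_m(\tau) \in \nu_m(c_m(\tau))$ and $c_m(\tau) \in \q_m$ containing $\tau$.

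The crux is to verify $h' \circ h = I$; by symmetry $h \circ h' = I$ as well, and since continuous bijections between compact Hausdorff spaces are homeomorphisms, $h \in \homcantor$. Fix $\sigma \in \cantor$, put $\tau = h(\sigma)$, and let $m$ be even. The clopen neighborhood $c_m(\tau)$ of $\tau$ contains $\alpha_k(\sigma)$ for all sufficiently large odd $k > m$, and since $\alpha_k(\sigma)$ lies in the \emph{single} element $\nu_k(a_k(\sigma)) \in \q_k$, that entire element is trapped inside $c_m(\tau)$. By the definition of $\nu_k^{-1}$ for odd $k > m$ this forces $a_k(\sigma) \subset \nu_k^{-1}(c_m(\tau)) \subset \ov{\nu}_m(c_m(\tau))$, so $\sigma \in \ov{\nu}_m(c_m(\tau))$; since $\beta_m(\tau)$ lies in the same set and $\mesh(\{\ov{\nu}_m(c) : c \in \q_m\}) \to 0$, we conclude $\beta_m(\tau) \to \sigma$, i.e., $h'(\tau) = \sigma$. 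Finally, the conjugacy relation $hf = gh$ is obtained exactly as in Theorem~\ref{conjugacyrelation}: the edge $\edge{a_n(\sigma) a_n(f(\sigma))}$ in $\gr(f,\p_n)$ is sent by $\nu_n$ to an edge in $\gr(g,\q_n)$, giving $g(\nu_n(a_n(\sigma))) \cap \nu_n(a_n(f(\sigma))) \neq \emptyset$ and hence
$$d(g(\alpha_n(\sigma)), \alpha_n(f(\sigma))) \leq \mesh(\q_n) + \mesh(g(\q_n)) \to 0;$$
letting $n \to \infty$ over odd integers yields $g(\tau) = h(f(\sigma))$. The subtlest point is the trapping step in the verification of $h' \circ h = I$: the rigidity that $\nu_k(a_k(\sigma))$ is a single clopen element of $\q_k$, not merely a subset of $\ov{\nu}_n(a_n(\sigma))$, is exactly what compensates for the loss of exact commutation.
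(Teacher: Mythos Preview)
Your proof is correct and follows the same overall architecture as the paper's (the cycle (i)$\Rightarrow$(ii)$\Rightarrow$(iii)$\Rightarrow$(i), with the substantive work in the last implication), but your implementation of (iii)$\Rightarrow$(i) differs in a way worth noting. The paper selects, for each odd $n$, a genuine homeomorphism $h_n \in \homcantor$ satisfying $\bigcup_{a \in \nu_n^{-1}(c)} h_n(a) = c$ for every $c \in \q_n$ (and dually a $t_n$ for even $n$), then shows $(h_n)_{n\ \mathrm{odd}}$ and $(t_n)_{n\ \mathrm{even}}$ are Cauchy and that $h_n \circ t_{n+1} \to I$ using adjacent indices $n,n+1$. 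You instead take locally constant, point-valued approximants $\alpha_n$ with $\alpha_n(\sigma) \in \nu_n(a_n(\sigma))$, define $h$ and $h'$ as their limits, and establish $h'\circ h = I$ by a ``large-$k$'' trapping argument that exploits directly the fact that $\nu_k(a_k(\sigma))$ is a single element of $\q_k$ and hence, once it meets $c_m(\tau)$, is contained in it. Your route is slightly more elementary in that it avoids invoking the homogeneity of the Cantor space needed to produce the $h_n$'s, while the paper's choice of homeomorphic approximants makes the Cauchy estimate and the inversion step $h_n\circ t_{n+1}\to I$ more parallel to the proof of Theorem~\ref{conjugacyrelation}. The conjugacy verification $hf = gh$ and the (i)$\Rightarrow$(ii) step are essentially identical in both treatments.
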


\begin{proof}
(i) $\Rightarrow$ (ii): By the implication (i) $\Rightarrow$ (ii) in
Theorem~\ref{conjugacyrelation}, there are decreasing null sequences
$(\p_n)$ and $(\q_n)$ of partitions of $\cantor$ and isomorphisms
$\theta_n : \gr(f,\p_n) \to \gr(g,\q_n)$ so that the sequence $(\theta_n)$
commutes with refinements. So, it is enough to define $\nu_n = \theta_n$
for odd $n$ and $\nu_n = \theta_n^{-1}$ for even $n$.

\smallskip
\noindent (ii) $\Rightarrow$ (iii): Obvious.

\smallskip
\noindent (iii) $\Rightarrow$ (i): For each odd $n$ and each even $m$,
we define
$$
\alpha_n = \mesh(\{\ov{\nu}_n(a) : a \in \p_n\}) \ \ \ \text{ and } \ \ \
 \beta_m = \mesh(\{\ov{\nu}_m(c) : c \in \q_m\}).
$$
Since $(\nu_n)$ asymptotically commutes with refinements,
$$
(\alpha_n)_{n \text{ odd}} \to 0 \ \ \ \text{ and } \ \ \
(\beta_m)_{m \text{ even}} \to 0.
$$

For each odd $n$, we choose an $h_n \in \homcantor$ such that
$$
\bigcup_{a \in \nu_n^{-1}(c)} h_n(a) = c \ \ \text{ for every } c \in \q_n.
$$
Note that
$$
h_n(a) \subset \nu_n(a) \ \ \text{ for every } a \in \p_n.
$$
Given $\sigma \in \cantor$ and $m \geq n$ with $m$ and $n$ odd,
let $a \in \p_n$ and $b \in \p_m$ be such that $\sigma \in a$ and
$\sigma \in b$. Then
$$
h_n(\sigma) \in \nu_n(a) \subset \ov{\nu}_n(a) \ \ \ \text{ and } \ \ \
h_m(\sigma) \in \nu_m(b) \subset \nu_m(a) \subset \ov{\nu}_n(a),
$$
and so $d(h_m(\sigma),h_n(\sigma)) \leq \alpha_n$. This proves that
$(h_n)_{n \text{ odd}}$ is a Cauchy sequence in $\contcantor$ and so
it converges to a certain $h \in \contcantor$. Similarly, for each even $n$,
we choose a $t_n \in \homcantor$ such that
$$
\bigcup_{c \in \nu_n^{-1}(a)} t_n(c) = a \ \ \text{ for every } a \in \p_n.
$$
Then
$$
t_n(c) \subset \nu_n(c) \ \ \text{ for every } c \in \q_n.
$$
By arguing as before, we see that $(t_n)_{n \text{ even}}$ converges
to a certain $t \in \contcantor$.

Now, let $\sigma \in \cantor$ and $n$ be odd. Let $c \in \q_{n+1}$ and
$a \in \p_n$ be such that $\sigma \in c$ and $t_{n+1}(\sigma) \in a$.
As $t_{n+1}(\sigma) \in t_{n+1}(c) \subset \nu_{n+1}(c) \in \p_{n+1}$
and $t_{n+1}(\sigma) \in a \in \p_n$, we must have $\nu_{n+1}(c) \subset a$.
Hence,
$$
\sigma \in c \subset (\nu_{n+1})^{-1}(a) \subset \ov{\nu}_n(a).
$$
On the other hand,
$$
h_n(t_{n+1}(\sigma)) \in h_n(a) \subset \nu_n(a) \subset \ov{\nu}_n(a).
$$
Therefore, $d(h_n(t_{n+1}(\sigma)),\sigma) \leq \alpha_n$.
This proves that $(h_n \circ t_{n+1})_{n \text{ odd}} \to I$,
the identity map of $\cantor$. Analogously,
$(t_n \circ h_{n+1})_{n \text{ even}} \to I$.
Thus, $h \circ t = t \circ h = I$, and so $h \in \homcantor$.

Finally, by arguing exactly as in the last paragraph of the proof of
Theorem~\ref{conjugacyrelation} (but considering only odd $n$),
we conclude that $f = h^{-1} \circ g \circ h$.
\end{proof}


\section{Generics and Applications to Dynamics}

Suppose that $h \in \homcantor$, $\p$ is a partition of $\cantor$ and
$D$ is a component of $\gr(h,\p)$ which is a dumbbell. Write
$$
D = \{u_1,\ldots,u_r\} \cup \{v_1,\ldots,v_s\} \cup \{w_1,\ldots,w_t\},
$$
with usual labeling. We say that {\em $D$ contains a left loop of $h$}
(resp.\ {\em a right loop of $h$}) if there is a nonempty clopen subset
$a$ of $u_1$ (resp.\ of $w_1$) such that $h^r(a) = a$ (resp.\ $h^t(a) = a$).

By using our methods, we shall now give a simple geometric/graph theoretic
description of the comeager conjugacy class of $\homcantor$.

\begin{thm}\label{ConjugacyClass}
Let $S$ be the set of all $h \in \homcantor$ with the following property:

\smallskip
\noindent {\rm (P)} For every $m \in \nat$, there are a partition $\p$ of
$\cantor$ of mesh $< 1/m$ and a multiple $q \in \nat$ of $m$ such that every
component of $\gr(h,\p)$ is a balanced dumbbell with plate weight $q!$
that contains both a left and a right loop of $h$.

\smallskip
\noindent Then, $S$ is a comeager conjugacy class of $\homcantor$.
\end{thm}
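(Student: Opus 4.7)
The plan is to establish three properties of $S$ which together yield the conclusion: (i) $S$ is a $G_\delta$ subset of $\homcantor$, (ii) $S$ is dense, and (iii) any two elements of $S$ are conjugate. That $S$ is closed under conjugation is immediate: if $h \in S$ has witness $(\p, q)$ for scale $m$, then $\phi^{-1} h \phi$ has witness $(\phi^{-1}(\p), q)$, since digraphs transport under conjugation and $\phi^{-1}$ carries loop-invariant clopen sets for $h$ to those for $\phi^{-1} h \phi$.

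For (i) and (ii), I would write $S = \bigcap_{m=1}^\infty U_m$, where $U_m \subset \homcantor$ is the set of $h$ satisfying (P) for that specific $m$. To prove each $U_m$ open, fix $h \in U_m$ with witnesses $\p, q$ and with loop clopen sets $a_D \subset u_1^D$ and $b_D \subset w_1^D$ for each dumbbell component $D$ of $\gr(h, \p)$. I would refine $\p$ to a partition $\p'$ so that the orbits $\{h^i(a_D)\}_{i=0}^{q!-1}$ and $\{h^i(b_D)\}_{i=0}^{q!-1}$ are all elements of $\p'$. For any $g$ with $g \sim_{\p'} h$, one has $\gr(g, \p) = \gr(h, \p)$; and by injectivity of $h$, each $h^i(a_D) \in \p'$ has exactly one incoming edge in $\gr(h, \p')$, namely from $h^{i-1}(a_D)$, and exactly one outgoing edge, to $h^{i+1}(a_D)$. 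Therefore $g \sim_{\p'} h$ forces $g(h^i(a_D)) \subset h^{i+1}(a_D)$, and bijectivity of $g$ yields equality; iterating gives $g^{q!}(a_D) = a_D$ as a left loop for $g$, and analogously for $b_D$. Hence a neighborhood of $h$ lies in $U_m$.

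For density of $U_m$, given $h \in \homcantor$ and $\epsilon > 0$, I would apply Theorem~\ref{approxthm}(b) with $q$ chosen as a multiple of $m$ large enough that $q!$ satisfies the numerical hypotheses of that theorem, producing $g_0 \in \homcantor$ and a partition $\p$ with $\tilde{d}(g_0, h) < \epsilon$, $\mesh(\p) < \min(\epsilon, 1/m)$, and $\gr(g_0, \p)$ a disjoint union of balanced dumbbells of plate weight $q!$. The theorem alone does not guarantee left or right loops, so I would strengthen the construction in the underlying Theorem~\ref{graphhom}: since each homeomorphism $g_0|_{x_e} : x_e \to y_e$ used there may be chosen arbitrarily, I can specify clopen threads $a_i \subsetneq u_{i+1}$ in each dumbbell's left loop ($i = 0, \ldots, q!-1$) and arrange $g_0(a_i) = a_{i+1 \bmod q!}$, giving $g_0^{q!}(a_0) = a_0$; the right loop is handled analogously.

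For (iii), given $h_1, h_2 \in S$, I would invoke Theorem~\ref{ConjugacyRelation}(iii) and inductively construct decreasing null sequences $(\p_n), (\q_n)$ and surjective graph maps $\nu_n$ alternating between $\gr(h_1, \p_n)$ and $\gr(h_2, \q_n)$ that asymptotically commute with refinements. At stage $n$, I would apply (P) to the relevant homeomorphism at a scale much finer than $1/n$ and take common refinements with the previously-constructed partitions to ensure nesting. The key combinatorial fact is that a balanced dumbbell of plate weight $q_1!$ with left and right loops admits a surjective graph map onto a balanced dumbbell of plate weight $q_2!$ with left and right loops whenever $q_2! \mid q_1!$: wrap the length-$q_1!$ loops $q_1!/q_2!$ times around the length-$q_2!$ loops, and map the source bar onto the target bar with any excess length absorbed into the target right loop. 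The main obstacle will be maintaining enough dumbbell-like structure after each common refinement to define these graph maps; I would address this by choosing the plate weights $q_n!$ to form a rapidly growing multiplicative tower and exploiting the fact that Theorem~\ref{ConjugacyRelation}(iii) requires only asymptotic, not strict, commutativity with refinements.
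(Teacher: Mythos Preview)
Your treatment of closure under conjugation and of parts (i) and (ii) is essentially correct, though the paper handles openness more directly by noting that $f \mapsto \gr(f,\q)$ is locally constant and that $f^n(a) = a$ is an open condition on $\homcantor$. Your density argument, building the loops into the Theorem~\ref{graphhom} construction rather than modifying $g$ afterward as the paper does, is a legitimate alternative.

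The genuine gap is in part (iii). You correctly identify the obstacle---refining destroys dumbbell structure---but your proposed remedy (common refinements, rapidly growing plate weights, asymptotic commutativity) does not address it. Taking a common refinement of a dumbbell partition with an arbitrary finer partition yields a digraph that is generally not a union of dumbbells at all, so your ``key combinatorial fact'' about graph maps between single dumbbells no longer applies. Moreover, that fact tacitly assumes the source bar is at least as long as the target bar, which is not automatic, and it says nothing about how the many components at the finer level sit inside the components at the coarser level. Finally, you never actually use the left/right loop hypothesis in your construction; mentioning it in the statement of the combinatorial fact is idle, since graph maps between abstract dumbbells do not see it.

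The paper's solution is quite different from common refinements. It introduces the \emph{bar-increasing methods}: splitting $u_1$ into $h^{-1}(u_2)$ and $h^{-1}(v_1)$ (or $w_1$ into $h(v_s)$ and $h(w_t)$) produces a genuine refinement that is still a dumbbell partition, with the bar lengthened by one. Iterating lets one equalize bar lengths at will. It then classifies the components of a fine $h$-regular partition as \emph{subdumbbells of types 1, 2, 3} according to whether they live in the left plate, the right plate, or cross the bar of a coarse dumbbell; the bar-increasing methods are used to normalize each subdumbbell so that this classification is clean. The left/right loop hypothesis is exactly what guarantees that subdumbbells of types 1 and 2 exist in abundance once the mesh is small enough---without actual $h$-invariant clopen sets inside the plates, every fine subdumbbell might be forced to cross the bar. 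With this machinery in place, the back-and-forth matches subdumbbells type-by-type, and the resulting $\nu_n$ commute \emph{strictly} with refinements (Theorem~\ref{ConjugacyRelation}(ii)), not merely asymptotically.
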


\begin{proof}
For each $m \in \nat$, let $S_m$ be the set of all $h \in \homcantor$ that
satisfies the property contained in (P) for this particular $m$.
If we fix a partition $\q$ of $\cantor$, then the map $f \to \gr(f,\q)$
is locally constant on $\contcantor$, because $f \sim_\q g$ implies
$\gr(f,\q) = \gr(g,\q)$. Moreover, if $a$ is clopen in $\cantor$ and
$n \in \nat$, then the condition $f^n(a) \subset a$ is an open condition
on $\contcantor$. By applying this to the inverse, we see that
$f^n(a) = a$ is an open condition on $\homcantor$. Therefore,
each $S_m$ is open in $\homcantor$. Let us prove that each $S_m$ is also
dense in $\homcantor$. For this purpose, fix $m \in \nat$, $f \in \homcantor$
and $\epsilon > 0$. By applying the approximation theorem
(Theorem~\ref{approxthm}(b)) with $\min\{\frac{\epsilon}{2},\frac{1}{m}\}$
in place of $\epsilon$, we obtain positive integers $K$, $N$, $S$ and $M$
with the properties described in the theorem. Choose a multiple $q \geq 2$
of $m$ such that $q!$ is a multiple of both $N$ and $M$. Then, with
$k = K$, $s = S$ and $n = m = q!$, Theorem~\ref{approxthm}(b) gives us
a $g \in \homcantor$ and a partition $\p$ of $\cantor$ with
$$
\tilde{d}(f,g) < \frac{\epsilon}{2} \ \ \ \text{ and } \ \ \
\mesh(\p) < \min\big\{\frac{\epsilon}{2},\frac{1}{m}\big\},
$$
such that $\gr(g,\p)$ is a digraph whose components are balanced dumbbells
with plate weight $q!$. Now, we define $h \in \homcantor$ in the following
way: for each component (dumbbell)
$$
D =\{u_1,\ldots,u_{q!}\} \cup \{v_1,\ldots,v_s\} \cup \{w_1,\ldots,w_{q!}\}
$$
of $\gr(g,\p)$ (with usual labeling), we choose nonempty proper clopen
subsets $a$ of $g^{-1}(u_2)$ and $b$ of $g(w_{q!})$, define $h$ on $u_{q!}$
so that
$$
h(g^{q!-1}(a)) = a \ \ \ \text{ and } \ \ \
h(u_{q!} \backslash g^{q!-1}(a)) = u_1 \backslash a,
$$
define $h$ on $w_{q!}$ so that
$$
h(g^{q!-1}(b)) = b \ \ \ \text{ and } \ \ \
h(w_{q!} \backslash g^{q!-1}(b)) = g(w_{q!}) \backslash b,
$$
and put $h = g$ on the remaining vertices of $D$. Then,
$\gr(h,\p) = \gr(g,\p)$ and each component (dumbbell) of $\gr(h,\p)$
contains both a left and a right loop of $h$, which shows that $h \in S_m$.
Moreover, $\tilde{d}(g,h) < \frac{\epsilon}{2}$ because
$\mesh(\p) < \frac{\epsilon}{2}\cdot$
Therefore, $\tilde{d}(f,h) < \epsilon$, proving that $S_m$ is dense in
$\homcantor$. Thus, $S = \bigcap S_m$ is a comeager subset of $\homcantor$.

In order to prove that $S$ is a conjugacy class, we shall begin by making
several important remarks and introducing further terminology.

Suppose that $h \in \homcantor$, $\p$ is a partition of $\cantor$ and
$D$ is a component of $\gr(h,\p)$ which is a dumbbell. Write
$$
D = \{u_1,\ldots,u_r\} \cup \{v_1,\ldots,v_s\} \cup \{w_1,\ldots,w_t\},
$$
with usual labeling. If we replace the set $u_1$ of $\p$ by the sets
$h^{-1}(u_2)$ and $h^{-1}(v_1)$, we obtain a refinement $\p'$ of $\p$
such that $\gr(h,\p')$ has the following dumbbell as a component:
$$
D' = \{u_r,h^{-1}(u_2),u_2,\ldots,u_{r-1}\} \cup
     \{h^{-1}(v_1),v_1,\ldots,v_s\} \cup \{w_1,\ldots,w_t\}.
$$
We call this procedure the {\em method of increasing the bar of the dumbbell
to the left}. Note that this method doesn't change the plate weights but
increase the bar length by $1$. Similarly, by breaking $w_1$ in the parts
$h(v_s)$ and $h(w_t)$, we obtain a refinement $\p''$ of $\p$ such that
$\gr(h,\p'')$ has the following dumbbell as a component:
$$
D'' = \{u_1,\ldots,u_r\} \cup \{v_1,\ldots,v_s,h(v_s)\} \cup
      \{w_2,\ldots,w_t,h(w_t)\}.
$$
This is called the {\em method of increasing the bar of the dumbbell to the
right}. Hence, by applying these methods repeatedly, we can make the bar of
the dumbbell $D$ increase to the left and/or to the right as much as we want.
This remark will be quite important in the sequel.

Let $h \in \homcantor$. We say that a partition $\p$ of $\cantor$ is
{\em $h$-regular} if each component of $\gr(h,\p)$ is a balanced dumbbell
that contains both a left and a right loop of $h$ and all components of
$\gr(h,\p)$ have the same plate weight (denoted $w(h,\p)$). Note that the
methods of increasing the bars of the dumbbells transform $h$-regular
partitions in $h$-regular partitions. If $h \in S$ then there are
$h$-regular partitions $\p$ such that $\mesh(\p)$ is as small as we want
and $w(h,\p)$ is a multiple of any positive integer we want.

Suppose $\p$ and $\p'$ are $h$-regular partitions. If $\mesh(\p')$ is
sufficiently small, then $\p'$ is necessarily a refinement of $\p$.
Assume that this is the case. Then, each component $D'$ of $\gr(h,\p')$
must be {\em contained} in some component $D$ of $\gr(h,\p)$, in the sense
that the union of all vertices of $D'$ is contained in the union of all
vertices of $D$. Moreover, $w(h,\p')$ is necessarily a multiple of $w(h,\p)$.
Write
$$
D = \{u_1,\ldots,u_q\} \cup \{v_1,\ldots,v_\ell\} \cup \{w_1,\ldots,w_q\}
$$
and
$$
D' = \{u'_1,\ldots,u'_{q'}\} \cup \{v'_1,\ldots,v'_{\ell'}\} \cup
     \{w'_1,\ldots,w'_{q'}\}
$$
($q = w(h,\p)$ and $q' = w(h,\p')$), with usual labeling. There are
three possibilities:

\smallskip
\noindent 1) $\bigcup\{a: a \in D'\} \subset u_1 \cup \ldots \cup u_q$:

\smallskip
By applying the methods of increasing the bar of $D'$, we may assume
$u'_1, w'_1 \subset u_1$. Under this assumption, we say that $D'$ is a
{\em subdumbbell of $D$ of type $1$}.

\smallskip
\noindent 2) $\bigcup\{a: a \in D'\} \subset w_1 \cup \ldots \cup w_q$:

\smallskip
Similarly, we may assume $u'_1, w'_1 \subset w_1$ in this case. Under this
assumption, we say that $D'$ is a {\em subdumbbell of $D$ of type $2$}.

\smallskip
\noindent 3) $\bigcup\{a: a \in D'\}$ meets $v_1 \cup \ldots \cup v_\ell$:

\smallskip
In this case, there must exist an integer $r \geq 0$ such that
$$
v'_{r+1} \subset v_1, \ v'_{r+2} \subset v_2, \ldots ,
v'_{r+\ell} \subset v_\ell,
$$
$$
u'_1 \cup \ldots \cup u'_{q'} \cup v'_1 \cup \ldots \cup v'_r
  \subset u_1 \cup \ldots \cup u_q
$$
and
$$
v'_{r+\ell+1} \cup \ldots \cup v'_{\ell'} \cup w'_1 \cup \ldots \cup w'_{q'}
  \subset w_1 \cup \ldots \cup w_q.
$$
By applying the methods of increasing the bar of $D'$, we may assume
$u'_1 \subset u_1$ and $w'_1 \subset w_1$. With this assumption, both
the number $r$ of $v'_j$ to the left of $v'_{r+1}$ and
the number $\ell' - \ell - r$ of $v'_j$ to the right of $v'_{r+\ell}$
are multiples of $q$. Therefore, by applying the methods of increasing
the bar of $D'$ again, we may assume $r = \ell' - \ell - r$.
Geometrically, this equality give us symmetry: $v'_{r+1},\ldots,v'_{r+\ell}$
lie in the center of the bar of $D'$. Under these assumptions, we say that
$D'$ is a {\em subdumbbell of $D$ of type~$3$}.

\smallskip
The previous discussion suggests the following definition:
if $\p$ and $\p'$ are $h$-regular partitions, we say that $\p'$ is
an {\em $h$-subpartition of $\p$} if $\p'$ is a refinement of $\p$ and
every component of $\gr(h,\p')$ is a subdumbbell (of type 1, 2 or 3)
of a component of $\gr(h,\p)$. We have seen that if $h \in S$, then every
$h$-regular partition $\p$ has $h$-subpartitions $\p'$ such that
$\mesh(\p')$ is as small as we want and $w(h,\p')$ is a multiple of
any positive integer we want.

Suppose $\p'$ is an $h$-subpartition of $\p$ and let $D$ be a component
of $\gr(h,\p)$. Then $D$ can be thought of as the union of its subdumbbells
relative to $\p'$. Clearly, there must exist at least one subdumbbell of $D$
of type 3. Since $D$ has both a left and a right loop of $h$, there must
also exist subdumbbells of $D$ of types $1$ and $2$ provided $\mesh(\p')$
is sufficiently small. More precisely, we can make the number of
subdumbbells of $D$ of type $1$ (resp.\ type $2$, type $3$) as large as
we want by choosing $\p'$ with $\mesh(\p')$ small enough.

We are now in position to prove that $S$ is a conjugacy class.
If $f \in S$ and $g \in \homcantor$ is conjugate to $f$, then it is easy
to verify that $g \in S$. Let us fix $f, g \in S$. It remains to prove that
$f$ and $g$ are conjugates. In view of Theorem~\ref{ConjugacyRelation},
it is enough to construct sequences $(\p_n)$, $(\q_n)$ and $(\nu_n)$
with the properties described in part (ii) of the theorem.

In order to construct $\p_1$, $\q_1$ and $\nu_1$, we begin by taking a
$g$-regular partition $\q_1$ with $\mesh(\q_1) < 1$. Then, we take an
$f$-regular partition $\p_1$ such that $\mesh(\p_1) < 1$, $w(f,\p_1)$
is a multiple of $w(g,\q_1)$ and the set $A$ of all components of
$\gr(f,\p_1)$ has cardinality greather than or equal to that of the
set $B$ of all components of $\gr(g,\q_1)$. By applying the methods
of increasing the bars of the dumbbells, we may assume that all
dumbbells in $A \cup B$ have the same bar length. Choose a surjection
$\phi : A \to B$. For each $D \in A$, we define $\nu_1$ on $D$ as the
unique surjection from $D$ onto $\phi(D)$ that maps the bar of $D$
onto the bar of $\phi(D)$ and satisfies the relation
$$
\edge{ab} \in \gr(f,\p_1) \ \Longrightarrow \
\edge{\nu_1(a)\nu_1(b)} \in \gr(g,\q_1) \ \ \ \ \ (a, b \in D).
$$
In this way, we obtain a surjective graph map
$\nu_1 : \gr(f,\p_1) \to \gr(g,\q_1)$.

In order to construct $\p_2$, $\q_2$ and $\nu_2$, we begin by taking an
$f$-subpartition $\p_2$ of $\p_1$ such that $\mesh(\p_2) < 1/2$ and
every component of $\gr(f,\p_1)$ has subdumbbells of types 1, 2 and 3
relative to $\p_2$. Then, we take a $g$-subpartition $\q_2$ of $\q_1$
such that $\mesh(\q_2) < 1/2$, $w(g,\q_2)$ is a multiple of $w(f,\p_2)$
and every component of $\gr(g,\q_1)$ has subdumbbells of types 1, 2 and 3
relative to $\q_2$. Fix a component $D$ of $\gr(g,\q_1)$ and let
$\{D_1,\ldots,D_r\}$ be the set of all components $D_j$ of $\gr(f,\p_1)$
such that $\nu_1(D_j) = D$. We can divide the set of all subdumbbells $D'$
of $D$ relative to $\q_2$ in three sets $A_1$, $A_2$ and $A_3$, according to
whether $D'$ is of type 1, 2 or 3, respectively. Similarly, for each
$1 \leq j \leq r$, the set of all subdumbbells of $D_j$ relative to $\p_2$
is a union of three disjoint sets $B_{j,1}$, $B_{j,2}$ and $B_{j,3}$.
We may assume that $\q_2$ was chosen so that
$$
{\rm Card}\, A_i \geq {\rm Card}(B_{1,i} \cup \ldots \cup B_{r,i})
\ \ \text{ for } i = 1, 2, 3.
$$
Hence, we may choose a surjection
$\phi_i : A_i \to B_{1,i} \cup \ldots \cup B_{r,i}$ ($i = 1, 2, 3$).
Moreover, by using the facts that the dumbbells $D,D_1,\ldots,D_r$
have the same bar length and that each number in the finite sequence
$w(g,\q_2)$, $w(f,\p_2)$, $w(f,\p_1)$, $w(g,\q_1)$ is a multiple of its
successor, and by applying the methods of increasing the bars of the
dumbbells, we may assume that all dumbbells in
$A_i \cup B_{1,i} \cup \ldots \cup B_{r,i}$ have the same bar
length (but the bar length may depend on $i$). Now, for each
$i \in \{1,2,3\}$ and each $D' \in A_i$,
we define $\nu_2$ on $D'$ as the unique surjection from $D'$ onto
$\phi_i(D')$ that maps the bar of $D'$ onto the bar of $\phi_i(D')$
and satisfies the relation
$$
\edge{ab} \in \gr(g,\q_2) \ \Longrightarrow \
\edge{\nu_2(a)\nu_2(b)} \in \gr(f,\p_2) \ \ \ \ \ (a, b \in D').
$$
With this definition, note that
$$
j_1(a) = \nu_1(i_1(\nu_2(a))) \ \ \text{ for every } a \in D',
$$
where $i_1 : \p_2 \to \p_1$ and $j_1 : \q_2 \to \q_1$ are the refinement maps.
Indeed, it is easy to see that this is true by using the following
elementary arithmetic fact: if $n_1, n_2, m_1, m_2 \in \nat$,
$m_1$ divides $n_1$, $n_1$ divides $n_2$ and $n_2$ divides $m_2$,
then the diagram
$$
\xymatrix{\integer_{n_1} \ar[rr] & & \integer_{m_1} \\
         \integer_{n_2} \ar[u]   & & \integer_{m_2} \ar[ll] \ar[u]}
$$
is commutative, where $\integer_n \to \integer_m$ denotes the mod $m$ map.
By defining $\nu_2$ in this way for each component $D$ of $\gr(g,\q_1)$,
we obtain a surjective graph map $\nu_2 : \gr(g,\q_2) \to \gr(f,\p_2)$
such that $j_1 = \nu_1 \circ i_1 \circ \nu_2$.

Now, we apply exactly the same procedure to construct $\p_3$, $\q_3$
and $\nu_3$ (but with $\p_3$ and $\q_3$ in place of $\q_2$ and $\p_2$,
respectively), and so on. This completes the proof that $f$ and $g$
are conjugates.
\end{proof}

Our goal in the remaining of the present section is to show how the
description of the comeager conjugacy class of $\homcantor$ given
by Theorem~\ref{ConjugacyClass} can be used to establish with little
effort very precise properties of the homeomorphisms of this class.
This will make clear that this description is quite useful and very
easy to use. We observe that any comeager dynamical property of an
element of $\homcantor$ is automatically satisfied by all elements
of the comeager conjugacy class. Let us also mention that several
of these properties can also be derived from the explicit construction
of Akin, Glasner and Weiss \cite{agw}.

Let us begin by considering the notion of chaos. As mentioned in the
introduction, there are several different notions of chaos, Li-Yorke chaos
being the weakest of them.
Let us recall that a pair $(x,y)$ is a Li-Yorke pair for a function $f$ if
$$
\liminf_{n \to \infty} d(f^n(x),f^n(y)) = 0 \ \ \ \text{ and } \ \ \
\limsup_{n \to \infty} d(f^n(x),f^n(y)) > 0.
$$
A function $f$ is Li-Yorke chaotic if there is an uncountable set $S$
such that $(x,y)$ is a Li-Yorke pair for $f$ whenever $x$ and $y$ are
distinct points in $S$. It was proved in \cite{gw} that 
the set of homeomorphisms in $\homcantor$ which have topological entropy
zero is comeager in $\homcantor$. Hence, a homeomorphism chosen at
``random'' is not chaotic in the sense of entropy. We shall now see that
the elements of the comeager conjugacy class are not even Li-Yorke chaotic.
In fact, a much stronger assertion can be made.

\begin{thm}\label{LiYorke}
No element of the comeager conjugacy class of $\homcantor$ has a
Li-Yorke pair.
\end{thm}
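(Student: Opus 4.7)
The plan is to argue by contradiction: suppose $(x,y)$ is a Li-Yorke pair for some $h$ in the comeager conjugacy class $S$; I will show that $\limsup_n d(h^n(x),h^n(y)) < 1/m$ for every $m \in \nat$, forcing $\limsup = 0$ and contradicting Li-Yorke. For each $m$ I apply Theorem~\ref{ConjugacyClass} to produce an $h$-regular partition $\p$ with $\mesh(\p) < 1/m$ such that every component of $\gr(h,\p)$ is a balanced dumbbell of plate weight $q!$, and I will exploit the rigid forward dynamics that such a partition imposes.

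First I would observe that each component $D$ of $\gr(h,\p)$, viewed as the union of its vertex clopen sets, is forward $h$-invariant: every outgoing edge from a vertex of $D$ lands inside $D$. Since distinct components are pairwise disjoint compacta, if $x$ and $y$ sat in different components then $d(h^n(x),h^n(y))$ would be bounded below by a positive constant, contradicting $\liminf = 0$. Hence $x$ and $y$ share a component $D = l_1 \cup p \cup l_2$ with the usual dumbbell labeling. Next, because the bar $p$ has finite length $s$, any forward orbit that enters $v_1$ reaches $w_1$ in $s$ steps and is trapped in the right plate thereafter, while any orbit that never enters $v_1$ stays in the left plate forever. If for arbitrarily large $n$ one of $h^n(x), h^n(y)$ were in $l_1$ and the other in $l_2$, the positive distance between the two plates would again contradict $\liminf = 0$, so from some time on both orbits are confined to a common plate $l_i$.

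Inside $l_i$ every vertex has a unique outgoing edge, namely to the next vertex of the loop of length $q!$, so $h$ advances the vertex-index of any orbit by exactly one at each step. Consequently the difference of the vertex-indices of $h^n(x)$ and $h^n(y)$ is constant modulo $q!$ for all large $n$. If this constant were nonzero, $h^n(x)$ and $h^n(y)$ would permanently sit in distinct elements of $\p$ and therefore satisfy $d(h^n(x),h^n(y)) \ge \min\{d(a,b) : a \neq b \in \p\} > 0$, contradicting $\liminf = 0$ once again. So the constant is zero, $h^n(x)$ and $h^n(y)$ eventually share a single element of $\p$, and thus $d(h^n(x),h^n(y)) \le \mesh(\p) < 1/m$ for all large $n$. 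Letting $m \to \infty$ yields the contradiction.

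I expect the only real care to be the graph-theoretic bookkeeping: each vertex of a dumbbell has exactly one outgoing edge except $u_1$, which has two, and this single branching is what routes an orbit from the left plate into the bar; without this observation one cannot correctly assert that, once two orbits share a vertex in $l_i$, they share every subsequent vertex. The two supporting facts used throughout are elementary: distinct clopen pieces of a partition are at positive distance, and the restriction of $h$ to each plate is essentially a cyclic rotation of period $q!$. The left- and right-loop clauses of property (P) play no direct role in the estimate; they enter only indirectly, through Theorem~\ref{ConjugacyClass}, to guarantee the existence of the partition $\p$.
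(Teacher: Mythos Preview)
Your proof is correct and follows essentially the same approach as the paper: both arguments use property~(P) to obtain a dumbbell partition of small mesh and then exploit the rigidity of the dumbbell graph to rule out a Li--Yorke pair. The paper compresses your case analysis (same component, then same plate, then same phase) into the single remark that two points occupying the same vertex of a dumbbell can separate at most once (since $u_1$ is the unique vertex of out-degree~$2$), which immediately contradicts having infinitely many ``same'' times and infinitely many ``different'' times; your version unpacks this into the plate-by-plate tracking, and your closing paragraph correctly identifies the $u_1$ branching as the only delicate point.
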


\begin{proof}
Let $h$ be an element of the comeager conjugacy class of $\homcantor$.
Then, $h$ satisfies property (P) of Theorem~\ref{ConjugacyClass}.
Assume $(\sigma,\tau)$ is a Li-Yorke pair for $h$ and choose $m \in \nat$
such that
$$
\frac{1}{m} < \limsup_{n \to \infty} d(h^n(\sigma),h^n(\tau)).
$$
Then, there is a partition $\p$ of $\cantor$ of mesh $< 1/m$ such that
every component of $\gr(h,\p)$ is a dumbbell. By our choice of $m$,
there must exist infinitely many $n$'s such that $h^n(\sigma)$ and $h^n(\tau)$
lie in different sets of $\p$. On the other hand, since
$\liminf_{n \to \infty} d(h^n(\sigma),h^n(\tau)) = 0$, there must exist
infinitely many $n$'s such that both $h^n(\sigma)$ and $h^n(\tau)$
lie in the same set of $\p$. But this impossible since every component
of $\gr(h,\p)$ is a dumbbell and two points in the same vertex of such a
dumbbell can be mapped into different vertices only once.
\end{proof}

Let $h : X \to X$ be a homeomorphism, where $X$ is a metric space.
A sequence $(x_n)_{n \in \integer}$ is a $\delta$-pseudotrajectory
($\delta > 0$) of $h$ if
$$
d(h(x_n),x_{n+1}) \leq \delta \ \text{ for every } n \in \integer.
$$
Recall that $h$ is said to have the shadowing property
\cite{bowen1},~\cite{bowen2}
(also called pseudo-orbit tracing property) if for every
$\epsilon > 0$ there exists $\delta > 0$ such that every
$\delta$-pseudotrajectory $(x_n)_{n \in \integer}$ of $h$ is
$\epsilon$-shadowed by a real trajectory of $h$, i.e., there exists $x \in X$
such that
$$
d(x_n,h^n(x)) < \epsilon \ \text{ for every } n \in \integer.
$$
Recall also that $h$ is said to have the weak shadowing property \cite{cp} if
for every $\epsilon > 0$ there exists $\delta > 0$ such that for
every $\delta$-pseudotrajectory $(x_n)_{n \in \integer}$ of $h$
there exists $x \in X$ such that the set $\{x_n : n \in \integer\}$
is contained in the $\epsilon$-neighborhood of the orbit
$\{h^n(x) : n \in \integer\}$. It was proved in \cite{mazur} that there
is a comeager subset of $\homcantor$, each element of which has the weak shadowing property.
We shall now see that it actually has the shadowing property.

\begin{thm}\label{Shadowing}
Each element of the comeager conjugacy class of $\homcantor$ has the shadowing property.
\end{thm}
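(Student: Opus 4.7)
My plan is to fix $h$ in the comeager conjugacy class and $\epsilon > 0$, apply property~(P) of Theorem~\ref{ConjugacyClass} to obtain a partition $\p$ of $\cantor$ with $\mesh(\p) < \epsilon$ such that every component of $\gr(h,\p)$ is a balanced dumbbell (with plate weight $q$) containing both a left and a right loop of $h$, and then take $\delta > 0$ to be smaller than the minimum pairwise distance between distinct members of $\p$ (which is positive since $\p$ is a finite family of pairwise disjoint clopen sets). Given a $\delta$-pseudotrajectory $(x_n)_{n \in \integer}$ of $h$, let $a_n \in \p$ be the element containing $x_n$; the inequality $d(h(x_n), x_{n+1}) \leq \delta$ forces $h(x_n) \in a_{n+1}$, so $\edge{a_n a_{n+1}}$ is an edge of $\gr(h,\p)$. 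Hence $(a_n)$ is a bi-infinite path in a single dumbbell component $D = \{u_1,\ldots,u_q\} \cup \{v_1,\ldots,v_s\} \cup \{w_1,\ldots,w_q\}$. Once I produce $x \in \cantor$ with $h^n(x) \in a_n$ for every $n$, the estimate $d(x_n, h^n(x)) \leq \mesh(\p) < \epsilon$ finishes the proof.

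The second step is to exploit the rigidity of $D$. The edges of the dumbbell, together with $h$ being a bijection, uniquely determine almost all of $h$'s action on the vertices of $D$: every vertex except $u_1$ has a unique outgoing edge and every vertex except $w_1$ has a unique incoming edge, which forces $h(u_i) = u_{i+1}$ for $2 \leq i \leq q$ (cyclically), $h(v_j) = v_{j+1}$ for $1 \leq j < s$, $h(w_k) = w_{k+1}$ for $1 \leq k \leq q$ (cyclically), $h(v_s) \subseteq w_1$, and $h(u_1) = u_2 \sqcup v_1$. In particular, $u_1 = u_1^{loop} \sqcup u_1^{bar}$ where $u_1^{loop} := u_1 \cap h^{-1}(u_2)$ and $u_1^{bar} := u_1 \cap h^{-1}(v_1)$ are both nonempty. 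A case analysis using these rigid shifts shows that every bi-infinite path $(a_n)$ in $D$ is one of three types: (a) cycling forever in the left loop; (b) cycling forever in the right loop; or (c) cycling in the left loop for $n \leq n_1$, traversing the bar sequence $u_1, v_1, \ldots, v_s, w_1$ between times $n_1$ and $n_1 + s + 1$, and cycling in the right loop for $n \geq n_1 + s + 1$.

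Finally, I construct a shadow $x$ in each case. For case~(a), the left loop of $h$ gives a nonempty clopen $a \subseteq u_1$ with $h^q(a) = a$; the rigid shift structure forces $h^j(a) \subseteq u_{1 + (j \bmod q)}$ for all $j$, so setting $x := h^{i-1}(y)$ for any $y \in a$ shadows the cycle whose time-$0$ vertex is $u_i$. Case~(b) is symmetric via the right loop of $h$. For case~(c), pick any $y \in u_1^{bar}$ and set $x := h^{-n_1}(y)$: the forward orbit of $y$ is then forced to pass through $v_1, v_2, \ldots, v_s, w_1, w_2, \ldots$ by the rigid shifts above, while the backward orbit is forced to cycle through $u_q, u_{q-1}, \ldots, u_1, u_q, \ldots$ because $h^{-1}$ is single-valued on the left loop (i.e., $h^{-1}(u_i) = u_{i-1}$ for each $i$ cyclically).

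The main obstacle I expect is case~(c), since the shadow there has a genuinely aperiodic orbit that must perform the bar crossing at the exact time $n_1$. This is resolved by the asymmetry of the dumbbell: branching occurs only forward, at $u_1$, and merging only backward, at $w_1$, so placing $y \in u_1^{bar}$ deterministically selects the bar route forward while the backward orbit is automatically cyclic. Notably, the left and right loops of $h$ from property~(P) enter the argument only in the periodic cases~(a) and~(b).
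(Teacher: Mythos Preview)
Your proof is correct and follows the same overall approach as the paper: choose $\delta$ below the minimum gap of a dumbbell partition $\p$, observe that any $\delta$-pseudotrajectory traces a bi-infinite edge-path in a single dumbbell, reduce to the same three cases, and shadow each by an actual orbit lying in the same sequence of vertices. The one difference worth noting is how the ``forever in one loop'' cases are handled: you invoke the explicit left/right loops of $h$ furnished by property~(P) to produce a genuinely periodic shadow, whereas the paper instead uses a compactness argument via the nested intersections $\bigcap_{n\ge 0} h^{-nr}(u_1)$ and $\bigcap_{n\ge 0} h^{nt}(w_1\setminus h(v_s))$, which does not require the loop clauses of~(P). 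Both devices work with equal ease; the paper's version is marginally more general (it applies to any dumbbell partition), while yours makes fuller use of the hypothesis you already have in hand.
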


\begin{proof}
Let $h$ be an element of the comeager conjugacy class of $\homcantor$.
Fix $\epsilon > 0$ and choose
$m \in \nat$ with $1/m < \epsilon$. Then, there is a partition $\p$ of
$\cantor$ of mesh $< 1/m$ such that every component of $\gr(h,\p)$ is a
dumbbell. Let $\delta$ be the minimum distance between two distinct elements
of $\p$. To each sequence $X = (\sigma_n)_{n \in \integer}$ in $\cantor$,
we associate the sequence $S(X) = (S_n(X))_{n \in \integer}$ in $\p$ which
satisfies $\sigma_n \in S_n(X)$ for every $n \in \integer$.
Let $W = (\tau_n)_{n \in \integer}$ be a $\delta$-pseudotrajectory of $h$
and let us prove that it can be $\epsilon$-shadowed by a real trajectory
of $h$. Let
$$
D = \{u_1,\ldots,u_r\} \cup \{v_1,\ldots,v_s\} \cup \{w_1,\ldots,w_t\}
$$
be the dumbbell in $\gr(h,\p)$ (with usual labeling) that contains a
vertex containing $\tau_0$. By our choice of $\delta$, there are only
three possibilities for the sequence $S(W)$, namely:
\begin{itemize}
\item [(1)] $(\ldots,u_1,\ldots,u_r,u_1,\ldots,u_r,v_1,\ldots,v_s,w_1,\ldots,
  w_t,w_1,\ldots,w_t,\ldots)$, or
\item [(2)] $(\ldots,u_1,\ldots,u_r,u_1,\ldots,u_r,u_1,\ldots,u_r,\ldots)$, or
\item [(3)] $(\ldots,w_1,\ldots,w_t,w_1,\ldots,w_t,w_1,\ldots,w_t,\ldots)$.
\end{itemize}
Since $\p$ has mesh $< \epsilon$, it is enough to find real trajectories
$$
X = (h^n(x))_{n \in \integer}, \ \
Y = (h^n(y))_{n \in \integer} \ \ \text{ and } \ \
Z = (h^n(z))_{n \in \integer}
$$
such that $S(X)$, $S(Y)$ and $S(Z)$ are of type (1), (2) and (3),
respectively. For the first case, it is enough to get any $x$ in a vertex
of the bar of $D$. For the second case, note that
$$
u_1 \supset h^{-r}(u_1) \supset h^{-2r}(u_1) \supset \cdots,
$$
so that the intersection $\bigcap_{n=0}^\infty h^{-nr}(u_1)$ is nonempty;
then take any $y$ in this intersection. Finally, since
$$
w_1 \backslash h(v_s) \supset h^t(w_1 \backslash h(v_s)) \supset
h^{2t}(w_1 \backslash h(v_s)) \supset \cdots,
$$
we may take any $z \in \bigcap_{n=0}^\infty h^{nt}(w_1 \backslash h(v_s))$.
\end{proof}

Given $\alpha \in (\nat \backslash \{1\})^\nat$, consider the product space
$$
\Delta_\alpha = \prod_{i=1}^\infty \integer_{\alpha(i)},
$$
where $\integer_k = \{0,\ldots,k-1\}$ with the discrete topology. Note that
$\Delta_\alpha$ is homeomorphic to the Cantor space. We define an operation
of addition on $\Delta_\alpha$ in the following way: if $(x_1,x_2,\ldots)$
and $(y_1,y_2,\ldots)$ are in $\Delta_\alpha$, then
$$
(x_1,x_2,\ldots) + (y_1,y_2,\ldots) = (z_1,z_2,\ldots),
$$
where $z_1 = x_1 + y_1$ mod $\alpha(1)$ and, in general, $z_i$ is defined
recursively as $z_i = x_i + y_i + \epsilon_{i-1}$ mod $\alpha(i)$ where
$\epsilon_{i-1} = 0$ if $x_{i-1} + y_{i-1} + \epsilon_{i-2} < \alpha(i-1)$
and $\epsilon_{i-1} = 1$ otherwise. If we let $f_\alpha$ be the ``$+1$''
map, that is,
$$
f_\alpha(x_1,x_2,\ldots) = (x_1,x_2,\ldots) + (1,0,0,\ldots),
$$
then $(\Delta_\alpha,f_\alpha)$ is a dynamical system known as a solenoid,
adding machine or odometer. We also define a function $M_\alpha$ from the
set of primes into $\{0,1,2,\ldots,\infty\}$ by
$$
M_\alpha(p) = \sum_{i=1}^\infty n(i),
$$
where $n(i)$ is the largest integer such that $p^{n(i)}$ divides $\alpha(i)$.
The following beautiful characterization of odometers up to topological
conjugacy is due to Buescu and Stewart \cite{BS}:

\smallskip
{\it Let $\alpha, \beta \in (\nat \backslash \{1\})^\nat$. Then
$f_\alpha$ and $f_\beta$ are topologically conjugate if and only if
$M_\alpha = M_\beta$.}

\smallskip
When $M_\alpha(p) = \infty$ for every $p$, $f_\alpha$ is said to be an
universal odometer. It follows from the above-mentioned result
that any two universal odometers are topologically conjugate. 

We shall need the following result from \cite{BK}:

\smallskip
{\it Let $\alpha \in (\nat \backslash \{1\})^\nat$ and
$m_i = \alpha(1) \alpha(2) \cdots \alpha(i)$ for each $i$. Let $f : X \to X$
be a continuous map of a compact topological space $X$. Then $f$ is
topologically conjugate to $f_\alpha$ if and only if {\rm (1)--(3)} hold:
\begin{itemize}
\item [{\rm (1)}] For each positive integer $i$, there is a cover $\p_i$ of
    $X$ consisting of $m_i$ nonempty pairwise disjoint clopen sets which
    are cyclically permuted by~$f$.
\item [{\rm (2)}] For each positive integer $i$, $\p_{i+1}$ refines $\p_i$.
\item [{\rm (3)}] If $W_1 \supset W_2 \supset W_3 \supset \cdots$ is a nested
    sequence with $W_i \in \p_i$ for each $i$, then
    $\bigcap_{i=1}^\infty W_i$ consists of a single point.
\end{itemize}}

\smallskip
Let us also recall that the $\omega$-limit set $\omega(x,f)$ of $f$ at $x$
is the set of all limit points of the sequence $(f^n(x))_{n \in \nat}$.

\begin{thm}\label{LimitSets}
Let $h$ be an element of the comeager conjugacy class of $\homcantor$.
Then, the restriction of $h$ to every $\omega$-limit set $\omega(\sigma,h)$
is topologically conjugate to the universal odometer.
\end{thm}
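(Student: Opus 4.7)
The plan is to verify, for $K = \omega(\sigma,h)$ and $f = h|_K$, the three-condition criterion of Buescu--Kamke stated just before the theorem, with a sequence $\alpha$ satisfying $M_\alpha(p) = \infty$ at every prime $p$. First, using property (P) of Theorem~\ref{ConjugacyClass} together with the methods of increasing the bars of the dumbbells and of passing to $h$-subpartitions introduced in the proof of that theorem, I would construct inductively a sequence of $h$-regular partitions $\p^{(1)}, \p^{(2)}, \ldots$ of $\cantor$ such that $\p^{(i+1)}$ is an $h$-subpartition of $\p^{(i)}$, $\mesh(\p^{(i)}) < 1/i$, and $w(h,\p^{(i)}) = q_i!$ for a strictly increasing sequence of positive integers $q_i$ with $q_i$ a multiple of both $i$ and $q_{i-1}$ (so that the successive plate weights divide one another). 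The availability of such a chain is exactly what is shown inside the proof of Theorem~\ref{ConjugacyClass}.

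Fix $\sigma \in \cantor$. At each scale $i$ the forward orbit of $\sigma$ is confined to a single component (dumbbell) $D^{(i)}$ of $\gr(h,\p^{(i)})$, since every vertex has all its outgoing edges inside its own component; moreover, as the bar of $D^{(i)}$ carries no directed cycle, the orbit traverses it in finitely many steps and then cycles forever through the $q_i!$ vertices of one of the two plates of $D^{(i)}$. Hence $K$ is contained in a single plate $P^{(i)}$ of $D^{(i)}$, meets each of its vertices $P^{(i)}_1,\ldots,P^{(i)}_{q_i!}$, and these intersections are cyclically permuted by $h$. Setting
$$
\p_i = \{ P^{(i)}_j \cap K : 1 \leq j \leq q_i! \}
$$
gives a partition of $K$ into $m_i = q_i!$ nonempty pairwise disjoint clopen subsets cyclically permuted by $h$, which is condition~(1) of the criterion. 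Condition~(3) is immediate from $\mesh(\p^{(i)}) \to 0$. For condition~(2), the crucial statement is $P^{(i+1)} \subseteq P^{(i)}$: the classification of $D^{(i+1)}$ as a subdumbbell of $D^{(i)}$ of type~1, 2, or 3 (carried out in the proof of Theorem~\ref{ConjugacyClass}) shows that if $K$ lies in the right plate of $D^{(i)}$ then $D^{(i+1)}$ must be of type~2 or type~3, and in both cases $P^{(i+1)}$ sits inside the right plate of $D^{(i)}$; the symmetric statement handles the left plate case.

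The criterion then gives that $f = h|_K$ is topologically conjugate to the odometer $f_\alpha$ with $\alpha(i) = m_i/m_{i-1} = q_i!/q_{i-1}! = (q_{i-1}+1)(q_{i-1}+2)\cdots q_i$. This product contains at least $\lfloor (q_i - q_{i-1})/p \rfloor$ factors divisible by any prescribed prime $p$; choosing the $q_i$'s so that $q_i - q_{i-1} \to \infty$ (for instance $q_i = (i+1)!$) makes this quantity unbounded, so $M_\alpha(p) = \infty$ for every prime $p$ and $f_\alpha$ is the universal odometer. The main obstacle I expect is verifying the refinement $P^{(i+1)} \subseteq P^{(i)}$: this hinges on the fact that $K$ cannot straddle the two plates of $D^{(i)}$ (once the orbit enters one plate it never returns to the other) together with the three-way subdumbbell case analysis borrowed from the proof of Theorem~\ref{ConjugacyClass}.
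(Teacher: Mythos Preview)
Your proposal is correct and follows essentially the same route as the paper: build a nested sequence of $h$-regular partitions with plate weights $q_i!$, observe that $\omega(\sigma,h)$ is trapped in one plate at every level, and feed the resulting cyclic clopen covers into the Block--Keesling criterion to identify the restriction with the odometer $f_\alpha$ for $\alpha = (q_1!,\,q_2!/q_1!,\,\ldots)$, which is universal. The paper's version is terser (it invokes only refinement rather than the full $h$-subpartition structure, and checks universality via the divisibility $m! \mid q_{m+1}!/q_m!$), but the argument is the same; note also that your side condition $q_i - q_{i-1} \to \infty$ is automatic once the $q_i$ are strictly increasing multiples of one another, so no extra choice such as $q_i = (i+1)!$ is needed.
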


\begin{proof}
Since $h$ satisfies property (P), we can construct
inductively a sequence $(\p_m)_{m \in \nat}$ of partitions of $\cantor$
and a sequence $(q_m)_{m \in \nat}$ of natural numbers so that the
following properties hold for every $m \in \nat$:
\begin{itemize}
\item $\mesh(\p_m) < 1/m$;
\item $\p_{m+1}$ refines $\p_m$;
\item $q_{m+1}$ is a multiple of $mq_m$;
\item every component of $\gr(h,\p_m)$ is a balanced dumbbell of plate
      weight $q_m!$.
\end{itemize}
Let $\sigma \in \cantor$ and consider the $\omega$-limit set
$\omega(\sigma,h)$. For each $m \in \nat$, $\sigma$ belongs to a vertex of
a certain dumbbell $D_m$ in $\gr(h,\p_m)$. Then, $\omega(\sigma,h)$ must be
contained in one of the loops of $D_m$, for each $m \in \nat$. Thus, it
follows from the above-mentioned result from \cite{BK} that
$h|_{\omega(\sigma,h)} : \omega(\sigma,h) \to \omega(\sigma,h)$
is topologically conjugate to $f_\alpha$, where
$$
\alpha = \big(q_1!,\frac{q_2!}{q_1!},\frac{q_3!}{q_2!},\frac{q_4!}{q_3!},
              \ldots\big).
$$
Since $m!$ divides $\frac{q_{m+1}!}{q_m!}$ for every $m \in \nat$,
$f_\alpha$ is an universal odometer.
\end{proof}

Given a continuous map $f : X \to X$, where $X$ is a metric space,
we shall denote by $P(f)$ (resp.\ $R(f)$, $\Omega(f)$, $CR(f)$) the set of
all periodic points (resp.\ recurrent points, nonwandering points,
chain recurrent points) of $f$ \cite{AH}.

\begin{thm}\label{RecurrentSet}
Let $h$ be an element of the comeager conjugacy class of $\homcantor$. Then, we have that:
\begin{itemize}
\item [{\rm (a)}] $P(h)$ is empty.
\item [{\rm (b)}] $R(h) = \Omega(h) = CR(h)$.
\item [{\rm (c)}] $R(h)$ is a Cantor set with empty interior in $\cantor$.
\end{itemize}
\end{thm}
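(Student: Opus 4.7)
The plan is to handle the three parts in order, using property (P) from Theorem~\ref{ConjugacyClass} together with Theorems~\ref{Shadowing} and \ref{LimitSets}. For (a), suppose $\sigma$ is periodic of period $n$ and apply (P) with $m > n$ to obtain a partition $\p$ whose components in $\gr(h,\p)$ are balanced dumbbells of plate weight $q! \geq m! > n$. The orbit of $\sigma$ traces a closed walk in $\gr(h,\p)$, but the only cycles in a dumbbell are its two loops (the bar is acyclic), so this walk lies in a single loop and $n$ must be a multiple of $q!$. Since $q! > n$, this is impossible.

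For (b), the standard inclusions $R(h) \subset \Omega(h) \subset CR(h)$ reduce the task to showing $CR(h) \subset \Omega(h)$ and $\Omega(h) \subset R(h)$. The first follows immediately from the shadowing property (Theorem~\ref{Shadowing}): a $\delta$-chain from $\sigma$ back to $\sigma$ is $\epsilon$-shadowed by a real orbit, yielding $\tau \in B(\sigma,\epsilon)$ with $h^N(\tau) \in B(\sigma,\epsilon)$, so $\sigma$ is nonwandering. For the second, fix $\sigma \in \Omega(h)$ and for each $m$ let $\p_m$ be a partition from (P) with vertex $a_m \ni \sigma$. Nonwandering forces some cycle in $\gr(h,\p_m)$ through $a_m$, so $a_m$ must be a loop vertex of some dumbbell $D_m$. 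If $a_m$ lies in the right loop $l_{2,m}$, which is forward invariant and cycles with period $q_m!$, then $h^{q_m!}(\sigma) \in a_m$, giving $d(h^{q_m!}(\sigma),\sigma) < 1/m$; dually, if $a_m$ lies in the left loop $l_{1,m}$, which is backward invariant, then $d(h^{-q_m!}(\sigma),\sigma) < 1/m$. If the right-loop case occurs for infinitely many $m$, then $\sigma \in \omega(\sigma,h) \subset R(h)$ directly. Otherwise the left-loop case holds for all large $m$, giving $\sigma \in \omega(\sigma,h^{-1})$; since (P) is symmetric under edge-reversal, $h^{-1}$ also lies in the comeager class, and Theorem~\ref{LimitSets} applied to $h^{-1}$ makes $\omega(\sigma,h^{-1})$ a minimal set for $h^{-1}$, hence for $h$. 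The forward $h$-orbit of $\sigma$ is then dense in this minimal set, yielding $\sigma \in \omega(\sigma,h) \subset R(h)$.

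For (c), let $L_m$ be the clopen union of loop vertices of $\gr(h,\p_m)$. Nonwandering gives $R(h) \subset L_m$ for each $m$, and conversely every loop vertex supports a self-cycle, so $\bigcap_m L_m \subset \Omega(h) = R(h)$; hence $R(h) = \bigcap_m L_m$ is closed and totally disconnected. It contains the Cantor-set universal odometer $\omega(\sigma,h)$ for every $\sigma$ (Theorem~\ref{LimitSets}), so it is nonempty and has no isolated points. To rule out interior, suppose some nonempty clopen $a \subset R(h)$ exists. For sufficiently large $m$, $a$ is a disjoint union of $\p_m$-vertices $b_j$, each forced to be a loop vertex. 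If $b_j = u_i$ lies in a left loop $\{u_1,\ldots,u_r\}$, forward iteration (using $h(u_k) = u_{k+1}$ for $2 \leq k \leq r-1$ and $h(u_r) = u_1$) propagates $b_j \subset R(h)$ around the loop to $u_1 \subset R(h)$; then $h(u_1) = u_2 \cup v_1 \subset R(h)$ forces $v_1 \subset R(h)$, contradicting $v_1 \cap L_m = \emptyset$. The right-loop case is symmetric, via backward iteration and $h^{-1}(w_1) = w_t \cup v_s$ to reach $v_s \subset R(h)$.

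The main obstacle is the $\Omega(h) \subset R(h)$ inclusion in (b) for left-loop points: backward invariance of $l_{1,m}$ only yields $h^{-1}$-recurrence, and converting this to $h$-recurrence requires the minimality of the $\omega$-limit sets of $h^{-1}$ combined with the closure of the comeager conjugacy class under inversion.
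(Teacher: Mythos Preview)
Your argument is correct, but the route you take for (b) and for the empty-interior part of (c) differs noticeably from the paper's.

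For (b), the paper proves $CR(h)\subset R(h)$ directly: if $\sigma\notin R(h)$, one chooses $m$ so that $\{n:d(h^n(\sigma),\sigma)<1/m\}$ is finite and a partition $\p$ of mesh $<1/m$ whose components are dumbbells. Then $\sigma$ must lie either in the bar or in the left loop with its forward orbit eventually leaving; in either case one checks that for $\epsilon$ small enough (depending on how long the orbit of $\sigma$ takes to enter the bar) every $\epsilon$-chain from $\sigma$ is forced into the bar and then into the right loop, so it can never return. This is entirely self-contained within the graph picture. Your argument instead factors through Theorems~\ref{Shadowing} and~\ref{LimitSets}: shadowing gives $CR(h)\subset\Omega(h)$, and then you show $\Omega(h)\subset R(h)$ by a left/right-loop dichotomy, invoking closure of the comeager class under inversion and minimality of the universal odometer to handle the left-loop case. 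This works, and the observation that $h^{-1}$ again satisfies (P) is a pleasant one, but it imports substantially more machinery than the paper needs.

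For the empty-interior clause of (c), the paper's argument is again more direct: for any dumbbell $D$ in $\gr(h,\p)$, every vertex of $D$ contains a nonrecurrent point, namely a point of one of the sets $h^{-r}(v_1),\ldots,h^{-1}(v_1),v_1,\ldots,v_s,h(v_s),\ldots,h^t(v_s)$, which together cover all vertices of $D$. Hence no vertex can be contained in $R(h)$. Your propagation argument---pushing a loop vertex contained in $R(h)$ around the loop via the equalities $h(u_k)=u_{k+1}$, $h(u_r)=u_1$, $h(u_1)=u_2\cup v_1$ (and dually with $h^{-1}$ on the right loop) to force a bar vertex into $R(h)$---reaches the same contradiction and is equally valid; it just uses invariance of $R(h)$ rather than exhibiting explicit nonrecurrent points.
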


\begin{proof}
(a): Obvious.

\smallskip
\noindent (b): Since $R(h) \subset \Omega(h) \subset CR(h)$, let us prove that
$CR(h) \subset R(h)$. For this purpose, suppose $\sigma \not\in R(h)$. Then,
there is an $m \in \nat$ such that the set
$$
\{n \in \nat : d(h^n(\sigma),\sigma) < 1/m\}
$$
is finite. Let $\p$ be a partition of $\cantor$ of mesh $< 1/m$ such that
every component of $\gr(h,\p)$ is a dumbbell. Let $D$ be the dumbbell
in $\gr(h,\p)$ which contains a vertex containing $\sigma$ and write
$$
D = \{u_1,\ldots,u_r\} \cup \{v_1,\ldots,v_s\} \cup \{w_1,\ldots,w_t\}
$$
with usual labeling. By our choice of $m$, $\sigma$ must belong either to
the bar $\{v_1,\ldots,v_s\}$ or to the loop $\{u_1,\ldots,u_r\}$ and,
in this last case, its trajectory must leave this loop at some moment.
Both possibilities imply that $\sigma \not\in CR(h)$.

\smallskip
\noindent (c): Let us now prove that $R(h)$ is a Cantor set. Since
$R(h) = \Omega(h)$, which is closed and nonempty, it is enough to show that
$R(h)$ has no isolated point. So, take a point $\tau \in R(h)$. Then
$\tau \in \omega(\tau,h)$. Since $\omega(\tau,h) \subset \Omega(h) = R(h)$
and $\omega(\tau,h)$ is a Cantor set (Theorem~\ref{LimitSets}),
$\tau$ is not an isolated point of $R(h)$.
Finally, suppose $A$ is a nonempty open set of $\cantor$ which is contained
in $R(h)$ and fix $\sigma \in A$. Let $\p$ and $D$ be as in the proof of (b),
where $m$ is chosen so big that the vertex $v$ of $D$ containing $\sigma$
must be contained in $A$. It is easy to see that every point in every set
of the collection
$$
\{h^{-r}(v_1),\ldots,h^{-2}(v_1),h^{-1}(v_1),v_1,\ldots,v_s,
h(v_s),h^2(v_s),\ldots,h^t(v_s)\}
$$
is nonrecurrent. Thus, every vertex of $D$ contains a nonrecurrent point,
contradicting the fact that $v$ is contained in $A$.
\end{proof}

To the best of our knowledge, the fact that the set of  all $h \in \homcantor$
which have no periodic point is comeager in $\homcantor$ 
first appeared in \cite{AHK}.

Recall that a mapping $f$ from a metric space $X$ into itself is said to
be equicontinuous at a point $x \in X$ if for every $\epsilon > 0$ there
exists $\delta > 0$ such that
$$
d(y,x) < \delta \ \ \ \Longrightarrow \ \ \ d(f^n(y),f^n(x)) < \epsilon
  \ \text{ for every } n \geq 0.
$$
Moreover, $f$ is said to be chain continuous at $x$ \cite{akins},~\cite{bernardes}
if for every $\epsilon > 0$ there exists $\delta > 0$ such that for any
choice of points
$$
x_0 \in B(x;\delta), \ x_1 \in B(f(x_0);\delta), \
x_2 \in B(f(x_1);\delta),\ldots,
$$
we have that
$$
d(x_n,f^n(x)) < \epsilon \ \text{ for every } n \geq 0.
$$
Of course, chain continuity is a much stronger property than equicontinuity.

\begin{thm}\label{ChainContinuity}
Let $h$ be an element of the comeager conjugacy class of $\homcantor$.
Then, $h$ is chain continuous at every
nonrecurrent point and so it is chain continuous at every point of a
dense open set, but it is not equicontinuous at each point of an
uncountable set.
\end{thm}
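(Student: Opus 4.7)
The statement packages three assertions; the middle one, chain continuity on a dense open set, is an immediate corollary of the first combined with Theorem~\ref{RecurrentSet}(c), which gives that $R(h)$ has empty interior, so $\cantor \setminus R(h)$ is a dense open set of nonrecurrent points. The substantive work is in the first and third assertions.

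For chain continuity at a nonrecurrent $\sigma$, fix $\epsilon > 0$ and use property (P) to pick an $h$-regular partition $\p$ with $\mesh(\p) < \epsilon$; let $D$ be the dumbbell of $\gr(h,\p)$ whose vertices contain $\sigma$. If $\sigma$ sits in the bar or the right loop of $D$, the outdegree is $1$ at every vertex along $\sigma$'s itinerary, so any $\delta$-pseudotrajectory starting in $\sigma$'s vertex with $\delta$ smaller than the minimum inter-vertex distance of $\p$ is forced through exactly the same sequence of vertices as the true orbit of $\sigma$, and hence $d(x_n,h^n(\sigma)) \le \mesh(\p) < \epsilon$ for every $n \ge 0$. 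The delicate case is $\sigma$ in the left loop. I would shrink $\mesh(\p)$ further so that the vertex $a$ containing $\sigma$ lies in $\cantor \setminus \omega(\sigma,h)$, which is possible because $\sigma \notin \omega(\sigma,h)$. A compactness argument then forces $\sigma$'s orbit to visit $a$ only finitely often (otherwise a subsequential limit of orbit points inside $a$ would lie in both $\omega(\sigma,h)$ and $a$), so the orbit exits the left loop of $D$ after some finite number $N$ of cycles. Applying the ``method of increasing the bar to the left'' from the proof of Theorem~\ref{ConjugacyClass} sufficiently many times yields an $h$-regular refinement $\p'$ with $\mesh(\p') \le \mesh(\p) < \epsilon$ in which $\sigma$ now lies in the bar of the refined dumbbell, reducing to the previous case.

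For non-equicontinuity, fix any $\sigma_0 \in \cantor$ and let $\Omega := \omega(\sigma_0,h)$, which is an uncountable Cantor set by Theorem~\ref{LimitSets}; I claim $h$ fails to be equicontinuous at every $\tau \in \Omega$. Fix such $\tau$, pick a coarse $h$-regular $\p_0$ whose dumbbell $D_0$ contains $\tau$, and observe that, because $\tau$ is recurrent, $\tau$ must lie in a loop of $D_0$; say in the left loop $L_0$, with right loop $R_0$ and $\eta := d(L_0,R_0) > 0$. Set $\epsilon := \eta/2$. For an arbitrary $\delta > 0$, refine $\p_0$ to an $h$-subpartition $\p$ with $\mesh(\p) < \delta$ in such a way that the component $D$ of $\gr(h,\p)$ containing $\tau$ is a \emph{type~$3$} subdumbbell of $D_0$ (so that $D$'s right loop sits inside $R_0$). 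Let $k$ be minimal with $h^k(\tau)$ in the branching vertex $u_1^{(D)}$ (this $h^k(\tau)$ is automatically in the persistent piece $u_1^{(D)} \cap h^{-1}(u_2^{(D)})$, since $\tau$'s orbit stays in $L_0$), pick $z$ in the escape piece $u_1^{(D)} \cap h^{-1}(v_1^{(D)})$, and set $y := h^{-k}(z)$. Because $z$ and $h^k(\tau)$ share the vertex $u_1^{(D)}$, their $h^{-k}$-preimages share a common $\p$-vertex, so $d(y,\tau) \le \mesh(\p) < \delta$; but $y$'s orbit is driven from $z$ into $D$'s bar and then into $D$'s right loop, which lies in $R_0$, while $\tau$'s orbit stays in $L_0$ forever. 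For all $n$ large enough that $h^n(y) \in R_0$, one has $d(h^n(y),h^n(\tau)) \ge \eta > \epsilon$.

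The hard part will be guaranteeing that the component of $\gr(h,\p)$ containing $\tau$ really is a type~$3$ subdumbbell of $D_0$ rather than a type~$1$ one entirely inside $L_0$ (which would confine $y$'s orbit to $L_0$ and destroy the lower bound). I would handle this by adapting the $h$-subpartition construction used in the proof of Theorem~\ref{ConjugacyClass}: since $D_0$ has both a left and a right loop of $h$, it admits arbitrarily many subdumbbells of type~$3$ at arbitrarily fine mesh, and one has the freedom, when choosing how to split the coarse vertex of $D_0$ containing $\tau$, to attach $\tau$'s refined cell to the left-loop cycle of such a type~$3$ subdumbbell.
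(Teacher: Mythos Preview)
Your chain-continuity argument is correct and actually fills in a detail the paper leaves vague: where the paper, for the case $\sigma$ in the left loop, simply says ``it is clear that we can also find such a $\delta$,'' you make this precise by applying the bar-increase method enough times to push $\sigma$ into the bar of a refined $h$-regular partition, reducing cleanly to the outdegree-$1$ case. This is essentially the same idea, carried out more carefully.

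The non-equicontinuity argument has a genuine gap. You need the component $D$ of $\gr(h,\p)$ containing $\tau$ to be a type~$3$ subdumbbell of $D_0$, but the proposed fix --- that one has ``the freedom \ldots to attach $\tau$'s refined cell to the left-loop cycle of such a type~$3$ subdumbbell'' --- is not valid: once $\p$ is fixed, $\gr(h,\p)$ and its components are determined by $h$, and the bar-increase methods only relabel vertices within a component, never changing its type or support. Worse, the type-$3$ property can genuinely fail. Recall that $D_0$ contains a left loop of $h$, i.e., a nonempty clopen $a \subset u_1$ with $h^r(a)=a$; then $a \cup h(a) \cup \cdots \cup h^{r-1}(a)$ is a clopen $h$-invariant subset of $L_0$. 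If you take $\sigma_0 \in a$, the entire $\omega$-limit set $\Omega$ sits inside this clopen set, so for every $\tau \in \Omega$ and every sufficiently small $\delta$, \emph{all} points within $\delta$ of $\tau$ have forward orbits confined to $L_0$. Hence every fine subdumbbell containing such a $\tau$ is type~$1$, your escaping point $y$ never reaches $R_0$, and the separation bound $d(h^n(y),h^n(\tau)) \ge \eta$ collapses.

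The paper avoids this by not starting from an arbitrary $\omega$-limit set. Instead it sets $Y = \bigcap_{n\ge 0} h^{-nr}(u_1)$, shows $Y$ is not open, and works with the boundary set $Z = \overline{u_1\setminus Y}\cap Y$, which is nonempty and satisfies $h^r(Z)=Z$. Every $\sigma \in Z$ has its orbit trapped in the left loop (since $\sigma\in Y$) while arbitrarily close points of $u_1\setminus Y$ eventually escape to the bar, giving non-equicontinuity directly. Uncountability follows because the closed $h$-invariant set $Z\cup h(Z)\cup\cdots\cup h^{r-1}(Z)$ contains the $\omega$-limit set of each of its points, which is a Cantor set by Theorem~\ref{LimitSets}. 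In short, the paper first locates the right invariant set and then reads off an $\omega$-limit set inside it, rather than hoping an arbitrary $\omega$-limit set will do.
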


\begin{proof}
Let $\sigma$ be a nonrecurrent point of $h$ and fix $\epsilon > 0$.
Choose $m \in \nat$ such that $1/m < \epsilon$ and the set
$$
\{n \in \nat : d(h^n(\sigma),\sigma) < 1/m\}
$$
is finite. Let $\p$ be a partition of $\cantor$ of mesh $< 1/m$ such that
every component of $\gr(h,\p)$ is a dumbbell and let
$$
D = \{u_1,\ldots,u_r\} \cup \{v_1,\ldots,v_s\} \cup \{w_1,\ldots,w_t\}
$$
be the dumbbell in $\gr(h,\p)$ (with usual labeling) that contains a vertex
containing $\sigma$. We want to find a $\delta > 0$ such that the relations
$\sigma_0 \in B(\sigma;\delta)$, $\sigma_1 \in B(h(\sigma_0);\delta)$,
$\sigma_2 \in B(h(\sigma_1);\delta),\ldots$ imply
$d(\sigma_n,h^n(\sigma)) < \epsilon$ for every $n \geq 0$.
If $\sigma \in v_1 \cup \ldots \cup v_s \cup w_1 \cup \ldots \cup w_t$, then
it is enough to choose $0 < \delta < \epsilon$ smaller than the minimum
distance between two distinct elements of $\p$. If $\sigma$ is in the loop
$\{u_1,\ldots,u_r\}$, our choice of $m$ implies that the trajectory of
$\sigma$ must eventually leave this loop, and so it is clear that we can also
find such a $\delta$ in this case. Thus, we have proved that $h$ is chain
continuous at every point of the set $\cantor \backslash R(h)$, which
is open and dense in view of Theorem~\ref{RecurrentSet}.

Let us now prove the last assertion. Since
$u_1 \supset h^{-r}(u_1) \supset h^{-2r}(u_1) \supset \cdots$,
the closed set
$$
Y = \bigcap_{n=0}^\infty h^{-nr}(u_1)
$$
is nonempty and satisfies $h^r(Y) = Y$. If $Y$ were open, it would follow
from the inclusions $u_1 \backslash Y \supset h^{-r}(u_1 \backslash Y)
\supset h^{-2r}(u_1 \backslash Y) \supset \cdots$ that the intersection
$\bigcap_{n=0}^\infty h^{-nr}(u_1 \backslash Y)$ is nonempty, which is
impossible. Thus, the closed set
$$
Z = \overline{u_1 \backslash Y} \cap Y
$$
is nonempty. Moreover, $h^r(Z) = Z$. If $\sigma \in Z$ then the trajectory
of $\sigma$ remains in the loop $\{u_1,\ldots,u_r\}$ forever (because
$\sigma \in Y$) but as close to $\sigma$ as we want there are points of
$u_1 \backslash Y$ (because $\sigma \in \overline{u_1 \backslash Y}$) and
the trajectories of these points eventually go to the bar of the dumbbell
$D$. This proves that $h$ is not equicontinuous at $\sigma$.
Thus, $h$ is not equicontinuous at each point of the set
$$
Z \cup h(Z) \cup \ldots \cup h^{r-1}(Z).
$$
Since this set is closed and invariant under $h$, it contains the
$\omega$-limit set of each of its elements, and so it is uncountable
in view of Theorem~\ref{LimitSets}.
\end{proof}


\section{The Case of Continuous Maps}

Suppose that $f \in \contcantor$, $\p$ is a partition of $\cantor$ and
$B$ is a component of $\gr(f,\p)$ which is a balloon. Write
$$
B = \{v_1,\ldots,v_s\} \cup \{w_1,\ldots,w_t\},
$$
with usual labeling. We say that $B$ is {\em strict relative to $f$} if
$f(v_i) \subsetneq v_{i+1}$ for every $1 \leq i < s$,
$f(w_j) \subsetneq w_{j+1}$ for every $1 \leq j < t$, and
$f(v_s) \cup f(w_t) \subsetneq w_1$.

Surprisingly enough, we shall now prove that there is a comeager subset
of $\contcantor$ such that any two elements of this set are conjugate
to each other.

\begin{thm}\label{genericthmcont}
Let $S$ be the set of all $f \in \contcantor$ with the following property:

\smallskip
\noindent {\rm (Q)} For every $m \in \nat$, there are a partition $\p$ of
$\cantor$ of mesh $< 1/m$ and a multiple $q \in \nat$ of $m$ such that
every component of $\gr(f,\p)$ is a balloon of type $(q!,q!)$ which is
strict relative to $f$.

\smallskip
\noindent Then, $S$ is a comeager subset of $\contcantor$ such that
any two of its elements are conjugate to each other.
\end{thm}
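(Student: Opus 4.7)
The plan is to parallel the proof of Theorem~\ref{ConjugacyClass}: first establish that $S$ is comeager, then show that any two members of $S$ are conjugate by invoking Theorem~\ref{ConjugacyRelation}. Write $S=\bigcap_{m\in\nat}S_m$, where $S_m$ is the set of $f\in\contcantor$ satisfying property (Q) for this particular $m$. To see that $S_m$ is open, fix $f\in S_m$ with witnessing partition $\p$ and multiple $q$. Any $g\sim_{\p}f$ has $\gr(g,\p)=\gr(f,\p)$, so the balloon structure is preserved. Each strict inclusion $f(v_i)\subsetneq v_{i+1}$ is preserved by sufficiently small $\tilde{d}$-perturbations: pick a nonempty clopen $U\subset v_{i+1}\setminus f(v_i)$, which is at positive distance from the compact set $f(v_i)$; for $g$ close enough to $f$, $g(v_i)\cap U=\emptyset$, hence $g(v_i)\subsetneq v_{i+1}$. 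Finitely many such conditions (including the analogous one at $w_1$) give a neighborhood of $f$ in $S_m$.

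For density, given $f\in\contcantor$, $\epsilon>0$, and $m\in\nat$, apply Theorem~\ref{approxthm}(a) with $\min\{\epsilon/2,1/m\}$ in place of $\epsilon$ to obtain constants $K,S,M$, and choose a multiple $q$ of $m$ with $q!\geq S$ and $M\mid q!$. Theorem~\ref{approxthm}(a) then yields $g_0\in\contcantor$ and a partition $\p$ with $\tilde{d}(f,g_0)<\epsilon/2$, $\mesh(\p)<\min\{\epsilon/2,1/m\}$, and $\gr(g_0,\p)$ a disjoint union of balloons of type $(q!,q!)$. Replace $g_0$ by a strict $g$ with $g\sim_{\p}g_0$ constructed as follows: for each vertex $v$ of a balloon whose successor $v'$ is not $w_1$, let $g|_v$ be a continuous surjection onto a nonempty proper clopen subset of $v'$ (possible since every nonempty clopen subset of $\cantor$ is itself a Cantor space); for the vertices $v_s$ and $w_t$, whose common successor is $w_1$, pick disjoint nonempty clopen $U_1,U_2\subset w_1$ with $U_1\cup U_2\subsetneq w_1$ and let $g$ surject $v_s$ onto $U_1$ and $w_t$ onto $U_2$. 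Then $\gr(g,\p)=\gr(g_0,\p)$, $g$ is strict on each balloon, and $\tilde{d}(f,g)<\epsilon$, giving $g\in S_m$. Thus $S$ is comeager in $\contcantor$.

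For the conjugacy step, fix $f,g\in S$ and follow the template of Theorem~\ref{ConjugacyClass}. Call a partition $\p$ \emph{$f$-regular} if each component of $\gr(f,\p)$ is a strict balloon and all share a common loop length $\ell(f,\p)$. The \emph{method of increasing the bar} of a strict balloon of type $(s,t)$ consists of splitting $w_1$ into two disjoint clopen pieces, each strictly containing exactly one of $f(v_s)$, $f(w_t)$, with union strictly contained in $w_1$; the refined graph contains a strict balloon of type $(s+1,t)$ in place of the original. Iterating, bar lengths can be made arbitrarily large without altering the loop length. Call $\p'$ an \emph{$f$-subpartition} of $\p$ if $\p'$ is an $f$-regular refinement; each component of $\gr(f,\p')$ sits inside a unique component of $\gr(f,\p)$, and is classified up to bar enlargement by which ``fresh'' clopen slot of the parent contains its left end, namely one of $v_1$, $v_i\setminus f(v_{i-1})$ for $1<i\leq s$, $w_j\setminus f(w_{j-1})$ for $1<j\leq t$, or $w_1\setminus(f(v_s)\cup f(w_t))$. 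Because $f\in S$, every $f$-regular partition admits $f$-subpartitions of arbitrarily small mesh and arbitrarily divisible $\ell(f,\p')$, and by taking the mesh small enough one ensures every parent balloon contains subballoons of every slot-type in any prescribed multiplicity. With this toolkit the back-and-forth construction of Theorem~\ref{ConjugacyClass} carries over: one builds decreasing null sequences $(\p_n),(\q_n)$ of $f$- and $g$-regular partitions together with alternating surjective graph maps $\nu_{2n-1}:\gr(f,\p_{2n-1})\to\gr(g,\q_{2n-1})$, $\nu_{2n}:\gr(g,\q_{2n})\to\gr(f,\p_{2n})$ that match subballoons of equal slot-type and align bars after bar-length equalisation, and checks commutativity with refinements via the same modular arithmetic diagram appearing in the proof of Theorem~\ref{ConjugacyClass}. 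Theorem~\ref{ConjugacyRelation} then delivers a homeomorphism $h$ of $\cantor$ with $f=h^{-1}gh$.

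The main obstacle is the combinatorics of the subballoon classification: balloons are asymmetric (one loop attached to one bar), so there are more slot-types than in the three-type subdumbbell scheme, and one must verify both that each slot-type is represented in sufficient multiplicity inside each parent balloon once the mesh is small enough, and that the bar-length equalisation required for the back-and-forth is consistent across successive stages. Once this combinatorial bookkeeping is in place, the remainder of the argument follows that of Theorem~\ref{ConjugacyClass} with only cosmetic changes.
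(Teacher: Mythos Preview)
Your overall architecture---write $S=\bigcap S_m$, prove each $S_m$ open and dense, then run a back-and-forth via Theorem~\ref{ConjugacyRelation}---matches the paper's. Your openness argument is more detailed than the paper's (which simply asserts it), and your density construction is fine, though the paper uses a slicker trick: compose the approximant $g$ with a map $\psi$ collapsing each element of $\p$ to a single point; the resulting $\psi\circ g$ has finite range and is therefore automatically strict on every balloon.

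The real divergence is in the conjugacy step, where you follow the dumbbell template too closely. Your ``method of increasing the bar'' is problematic: the description is self-contradictory (you split $w_1$ into two pieces whose union is strictly contained in $w_1$), and for non-injective $f$ the images $f(v_s)$ and $f(w_t)$ need not be disjoint, so no clopen separation may exist. More to the point, the method is unnecessary: property~(Q) already forces every balloon in $\gr(f,\p)$ to have the \emph{same} type $(q!,q!)$, so there is no bar-length discrepancy to equalise, either within a single partition or between successive stages.

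What you have missed is that the balloon case is \emph{simpler} than the dumbbell case, not harder. Every vertex of a balloon has out-degree exactly~$1$, so a graph map from one balloon into another is uniquely determined by the image of the initial vertex. The paper's subballoon classification is accordingly just ``type $u$''---the parent vertex $u$ containing the subballoon's initial vertex---rather than your finer slot scheme. Commutativity $j_1=\nu_1\circ i_1\circ\nu_2$ is then verified by a one-line induction along the balloon (check it at the initial vertex, then propagate along the unique outgoing edge), with no modular-arithmetic diagram needed. Once you drop the bar-increasing apparatus and use this out-degree-$1$ observation, the back-and-forth goes through cleanly.
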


\begin{proof}
For each $m \in \nat$, let $S_m$ be the set of all $f \in \contcantor$
that satisfies the property contained in (Q) for this particular $m$.
Clearly, each $S_m$ is open in $\contcantor$. In order to prove that
each $S_m$ is also dense $\contcantor$, let us fix $m \in \nat$,
$f \in \contcantor$ and $\epsilon > 0$. It follows from
Theorem~\ref{approxthm}(a) that there are $g \in \contcantor$ with
$\tilde{d}(f,g) < \frac{\epsilon}{2}$, a partition $\p$ of $\cantor$ of mesh
$< \min\{\frac{\epsilon}{2},\frac{1}{m}\}$,
and a multiple $q \in \nat$ of $m$ such that $\gr(g,\p)$ is a digraph whose
components are balloons of type $(q!,q!)$.
If $\psi : \cantor \to \cantor$ maps each $a \in \p$ to a single point
of $a$, then $\psi$ is continuous. Moreover, $\psi \circ g \sim_\p g$,
which implies that $\gr(\psi \circ g,\p) = \gr(g,\p)$ and
$\tilde{d}(\psi \circ g,g) \leq \mesh(\p) < \frac{\epsilon}{2}$
(and so $\tilde{d}(\psi \circ g,f) < \epsilon$). Since $\psi \circ g$
has finite range, each component (balloon) of $\gr(\psi \circ g,\p)$ is strict
relative to $\psi \circ g$. Hence, $\psi \circ g \in S_m$, proving that
$S_m$ is dense in $\contcantor$. Thus, $S = \bigcap S_m$ is a comeager
subset of $\contcantor$.

Let $f \in \contcantor$. We say that a partition $\p$ of $\cantor$ is
{\em $f$-admissible} if there is a $k \in \nat$ such that every component
of $\gr(f,\p)$ is a balloon of type $(k,k)$ which is strict relative to $f$.
In this case, we denote this number $k$ by $b(f,\p)$. If $f \in S$ then
there are $f$-admissible partitions $\p$ such that $\mesh(\p)$ is as small
as we want and $b(f,\p)$ is a multiple of any positive integer we want.

Suppose $\p$ and $\p'$ are $f$-admissible partitions. If $\mesh(\p')$ is
sufficiently small, then $\p'$ is necessarily a refinement of $\p$.
Assume that this is the case. Then, each component $B'$ of $\gr(f,\p')$
must be {\em contained} in some component $B$ of $\gr(f,\p)$, in the sense
that the union of all vertices of $B'$ is contained in the union of all
vertices of $B$. Moreover, $b(f,\p')$ is necessarily a multiple of $b(f,\p)$.
Let $B$ be a component of $\gr(f,\p)$. We say that a component $B'$ of
$\gr(f,\p')$ is a {\em subballoon of $B$ of type $u$} if the initial vertex
of $B'$ is contained in the vertex $u$ of $B$. With this definition,
$B$ can be thought of as the union of its subballoons relative to $\p'$.
Since the balloon $B$ is strict relative to $f$, there must exist
subballoons of $B$ of every type $u \in B$ provided $\mesh(\p')$
is sufficiently small. More precisely, we can make the number of
subballoons of $B$ of each type $u$ as large as we want by choosing
$\p'$ with $\mesh(\p')$ small enough.

If $f \in S$ and $g \in \contcantor$ is conjugate to $f$, then it is easy
to verify that $g \in S$. Take $f, g \in S$ and let us prove that
$f$ and $g$ are conjugates. It is enough to construct sequences
$(\p_n)$, $(\q_n)$ and $(\nu_n)$ with the properties described in part (ii)
of Theorem~\ref{ConjugacyRelation}.

We begin by taking a $g$-admissible partition $\q_1$ with $\mesh(\q_1) < 1$.
Then, we take an $f$-admissible partition $\p_1$ such that $\mesh(\p_1) < 1$,
$b(f,\p_1)$ is a multiple of $b(g,\q_1)$ and the set $X$ of all components of
$\gr(f,\p_1)$ has cardinality greather than or equal to that of the
set $Y$ of all components of $\gr(g,\q_1)$. Finally, we choose a surjection
$\phi : X \to Y$ and, for each $B \in X$, we define $\nu_1$ on $B$ as the
unique surjection from $B$ onto $\phi(B)$ that maps the initial vertex of $B$
to the initial vertex of $\phi(B)$ and satisfies the relation
$$
\edge{ab} \in \gr(f,\p_1) \ \Longrightarrow \
\edge{\nu_1(a)\nu_1(b)} \in \gr(g,\q_1) \ \ \ \ \ (a, b \in B).
$$
In this way, we obtain a surjective graph map
$\nu_1 : \gr(f,\p_1) \to \gr(g,\q_1)$.

Now, we take an $f$-admissible partition $\p_2$ such that $\p_2$ refines
$\p_1$, $\mesh(\p_2) < 1/2$ and every component of $\gr(f,\p_1)$ has
subballoons of every type relative to $\p_2$. Then, we take a $g$-admissible
partition $\q_2$ such that $\q_2$ refines $\q_1$, $\mesh(\q_2) < 1/2$,
$b(g,\q_2)$ is a multiple of $b(f,\p_2)$ and every component of $\gr(g,\q_1)$
has subballoons of every type relative to $\q_2$. Let us fix a component $B$
of $\gr(g,\q_1)$ and let $\{B_1,\ldots,B_r\}$ be the set of all components
$B_k$ of $\gr(f,\p_1)$ such that $\nu_1(B_k) = B$. For each $u \in B$,
let $X_u$ be the set of all subballoons of $B$ (relative to $\q_2$)
of type $u$. Moreover, for each $1 \leq k \leq r$, let $Y_{k,u}$ be the
set of all subballoons of $B_k$ (relative to $\p_2$) of type $v$ for some
$v \in \nu_1^{-1}(\{u\})$. We may assume that $\q_2$ was chosen so that
$$
{\rm Card}\, X_u \geq {\rm Card}(Y_{1,u} \cup \ldots \cup Y_{r,u})
\ \ \text{ for every } u \in B.
$$
Hence, we may choose a surjection
$\phi_u : X_u \to Y_{1,u} \cup \ldots \cup Y_{r,u}$ ($u \in B$).
Finally, for each $u \in B$ and each $B' \in X_u$,
we define $\nu_2$ on $B'$ as the unique surjection from $B'$ onto $\phi_u(B')$
that maps the initial vertex of $B'$ to the initial vertex of $\phi_u(B')$
and satisfies the relation
$$
\edge{ab} \in \gr(g,\q_2) \ \Longrightarrow \
\edge{\nu_2(a)\nu_2(b)} \in \gr(f,\p_2) \ \ \ \ \ (a, b \in B').
$$
We claim that
$$
(\ast) \ \ \ \ \ j_1(a) = \nu_1(i_1(\nu_2(a))) \ \ \text{ for every } a \in B',
$$
where $i_1 : \p_2 \to \p_1$ and $j_1 : \q_2 \to \q_1$ are the refinement maps.
In fact, let us first consider the initial vertex $c$ of $B'$.
Let $k \in \{1,\ldots,r\}$ be such that $\phi_u(B') \in Y_{k,u}$.
Since $j_1(c) = u$ (because $B' \in X_u$) and
$i_1(\nu_2(c)) \in \nu_1^{-1}(\{u\})$ (because $\nu_2(c)$ is the initial
vertex of $\phi_u(B')$ and $\phi_u(B') \in Y_{k,u}$), it follows that $c$
satisfies the equality in $(\ast)$. Now, let us assume that a certain
vertex $a$ of $B'$ satisfies the equality in $(\ast)$. Let $b$ be the
unique vertex of $B'$ such that $\edge{ab} \in B'$. Since
$$
\edge{ab} \in B' \ \Longrightarrow \ \edge{j_1(a)j_1(b)} \in B,
$$
\begin{align*}
\edge{ab} \in B' \ & \Longrightarrow \ \edge{\nu_2(a)\nu_2(b)} \in \phi_u(B')\\
  & \Longrightarrow \ \edge{i_1(\nu_2(a))i_1(\nu_2(b))} \in B_k\\
  & \Longrightarrow \ \edge{\nu_1(i_1(\nu_2(a)))\nu_1(i_1(\nu_2(b)))} \in B,
\end{align*}
and we are assuming that $j_1(a) = \nu_1(i_1(\nu_2(a)))$, it follows that
$b$ also satisfies the equality in $(\ast)$. By induction,
we see that $(\ast)$ holds.
Thus, by defining $\nu_2$ in this way for each component $B$ of $\gr(g,\q_1)$,
we obtain a surjective graph map $\nu_2 : \gr(g,\q_2) \to \gr(f,\p_2)$
such that $j_1 = \nu_1 \circ i_1 \circ \nu_2$.

Now, we apply exactly the same procedure to construct $\p_3$, $\q_3$
and $\nu_3$ (but with $\p_3$ and $\q_3$ in place of $\q_2$ and $\p_2$,
respectively), and so on. This completes the proof.
\end{proof}

Let us now establish some applications to dynamics.

It was proved in \cite{dd} that the set of elements of $\contcantor$
which have zero topological entropy and no periodic points is comeager in
$\contcantor$. Hence, an element chosen at ``random'' from $\contcantor$ is
not chaotic in the sense of entropy nor in the sense of Devaney.
We show something much stronger below. 

\begin{thm}\label{LiYorkecont}
There is a comeager subset of $\contcantor$, no element of which
has a Li-Yorke pair.
\end{thm}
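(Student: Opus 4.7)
The plan is to take the comeager set $S$ from Theorem~\ref{genericthmcont} and show that no $f \in S$ admits a Li-Yorke pair. The argument closely parallels the proof of Theorem~\ref{LiYorke}; it is in fact a little simpler, since in a balloon each vertex has a unique outgoing edge, so $f$ sends any vertex into a single successor vertex at the level of the partition.

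Suppose, for contradiction, that $(\sigma,\tau)$ is a Li-Yorke pair for some $f \in S$. Since $\limsup_{n\to\infty} d(f^n(\sigma),f^n(\tau)) > 0$, pick $m \in \nat$ with $1/m < \limsup_{n\to\infty} d(f^n(\sigma),f^n(\tau))$. By property~(Q), there is a partition $\p$ of $\cantor$ with $\mesh(\p) < 1/m$ such that every component of $\gr(f,\p)$ is a strict balloon of type $(q!,q!)$. Let $\delta > 0$ be the minimum distance between distinct elements of $\p$, which is positive because $\p$ is a finite family of pairwise disjoint clopen sets.

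The key observation is the following: in any balloon component, each vertex has a unique outgoing edge, so for every $a \in \p$, the set $f(a)$ is contained in a single element of $\p$. By induction, for each $k \geq 0$, the image $f^k(a)$ lies inside a single element of $\p$. In particular, once two points $f^{n_0}(\sigma)$ and $f^{n_0}(\tau)$ lie in a common element of $\p$, so do $f^n(\sigma)$ and $f^n(\tau)$ for every $n \geq n_0$.

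To finish, I would use $\liminf_{n\to\infty} d(f^n(\sigma),f^n(\tau)) = 0$ to select $n_0$ with $d(f^{n_0}(\sigma),f^{n_0}(\tau)) < \delta$; the definition of $\delta$ forces $f^{n_0}(\sigma)$ and $f^{n_0}(\tau)$ into a common element of $\p$. By the previous paragraph, $d(f^n(\sigma),f^n(\tau)) \leq \mesh(\p) < 1/m$ for every $n \geq n_0$, contradicting the choice of $m$. There is no serious obstacle here: the whole argument rests on the single observation that strict balloons have deterministic forward dynamics at the level of $\p$, so once property~(Q) is available the proof is essentially mechanical.
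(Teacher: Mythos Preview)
Your proof is correct and follows essentially the same route as the paper: both arguments use the comeager set $S$ from Theorem~\ref{genericthmcont} and rest on the observation that in a balloon every vertex has a unique outgoing edge, so once $f^{n_0}(\sigma)$ and $f^{n_0}(\tau)$ land in the same element of $\p$ they remain together for all later iterates. The paper phrases this directly (showing $\liminf = 0$ forces $\lim = 0$) rather than by contradiction, and does not introduce the auxiliary $\delta$, but the content is identical.
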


\begin{proof}
Let $f \in \contcantor$ satisfy property (Q) of Theorem~\ref{genericthmcont}.
Suppose that $\sigma,\tau \in \cantor$ satisfy
$$
\liminf_{n \to \infty} d(f^n(\sigma),f^n(\tau)) = 0.
$$
Fix $\epsilon > 0$ and choose $m \in \nat$ such that $1/m < \epsilon$.
Then, there is a partition $\p$ of $\cantor$ of mesh $< 1/m$ such that
every component of $\gr(f,\p)$ is a balloon. Moreover, there must exists an
$n_0 \in \nat$ such that both $f^{n_0}(\sigma)$ and $f^{n_0}(\tau)$ lie in
the same vertex $a$ of $\gr(f,\p)$. Let $B$ be the component of $\gr(f,\p)$
that contains $a$. Since $B$ is a balloon, $f$ maps each vertex of $B$
into a vertex of $B$. Hence, both $f^n(\sigma)$ and $f^n(\tau)$ lie in
the same vertex of $B$, so that $d(f^n(\sigma),f^n(\tau)) < \epsilon$,
for each $n \geq n_0$. This proves that
$$
\lim_{n \to \infty} d(f^n(\sigma),f^n(\tau)) = 0,
$$
and so $(\sigma,\tau)$ is not a Li-Yorke pair for $f$.
\end{proof}

In contrast to the case of homeomorphisms (Theorem~\ref{ChainContinuity}),
we have the following

\begin{thm}\label{ChainContinuityCont}
The set of maps $f$ of $\contcantor$ such that $f$ is chain continuous
at every point is comeager in $\contcantor$.
\end{thm}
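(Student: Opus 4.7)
The plan is to show that every $f$ in the comeager set $S$ from Theorem~\ref{genericthmcont} is chain continuous at every point; combined with that theorem, this immediately gives the result.

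Fix $f \in S$, a point $\sigma \in \cantor$, and $\epsilon > 0$. First I would invoke property (Q) to choose $m \in \nat$ with $1/m < \epsilon$ and obtain a partition $\p$ of mesh $< 1/m$ such that every component of $\gr(f,\p)$ is a balloon (strict relative to $f$). Then let $\delta > 0$ be strictly smaller than the minimum distance between distinct elements of $\p$. The key structural observation is that in a balloon every vertex has out-degree exactly one, so for each $a \in \p$ there is a unique $a^\sharp \in \p$ with $f(a) \subset a^\sharp$.

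Now suppose $x_0 \in B(\sigma;\delta)$ and $x_{n+1} \in B(f(x_n);\delta)$ for all $n \geq 0$. Let $a_0 \in \p$ be the vertex containing $\sigma$; by choice of $\delta$, we also have $x_0 \in a_0$. Define $a_{n+1} = a_n^\sharp$, so that $f^{n+1}(\sigma) \in f(a_n) \subset a_{n+1}$ for each $n$. I would then prove by induction that $x_n \in a_n$: granted $x_n \in a_n$, we have $f(x_n) \in a_{n+1}$, and since $d(x_{n+1},f(x_n)) < \delta$, the choice of $\delta$ forces $x_{n+1} \in a_{n+1}$ as well. Since $\sigma$ and $x_n$ both lie in $a_n$ up to the orbit shift, more precisely $f^n(\sigma), x_n \in a_n$, and $\diam(a_n) \leq \mesh(\p) < 1/m < \epsilon$, we conclude $d(x_n, f^n(\sigma)) < \epsilon$ for every $n \geq 0$. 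This establishes chain continuity of $f$ at the arbitrary point $\sigma$.

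There is essentially no obstacle here: the substantive work has already been done in Theorem~\ref{genericthmcont}, which produces the rigid balloon structure on partitions of arbitrarily small mesh. The only thing to check carefully is the induction step, where one uses that distinct elements of $\p$ are separated by at least $\delta$ together with the out-degree-one property of balloons to route all pseudo-orbits into the same sequence $a_0, a_1, a_2, \ldots$ of vertices as the true orbit of $\sigma$.
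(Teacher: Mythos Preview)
Your proof is correct and follows essentially the same route as the paper: both arguments invoke property~(Q) to obtain a balloon-structured partition of small mesh, take $\delta$ to be (at most) the minimum separation between distinct partition elements, and use the out-degree-one property of balloons to force every $\delta$-pseudo-orbit to track the true orbit through the same sequence of vertices. Your version spells out the induction and the successor notation $a^\sharp$ a bit more explicitly, but the substance is identical.
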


\begin{proof}
Let $f \in \contcantor$ satisfy property (Q).
Fix $\epsilon > 0$ and choose $m \in \nat$ such that $1/m < \epsilon$.
Then, there is a partition $\p$ of $\cantor$ of mesh $< 1/m$ such that
every component of $\gr(f,\p)$ is a balloon. Let $\delta$ be the minimum
distance between two distinct elements of $\p$. Given $\sigma \in \cantor$,
let $B$ be the component of $\gr(f,\p)$ which contains a vertex
containing $\sigma$. Since $B$ is a balloon, $f$ maps each vertex of $B$
into a vertex of $B$. Hence, if $\sigma_0 \in B(\sigma;\delta)$,
$\sigma_1 \in B(f(\sigma_0);\delta)$, $\sigma_2 \in B(f(\sigma_1);\delta),
\ldots$, then both $\sigma_n$ and $f^n(\sigma)$ lie in the same vertex of $B$,
so that $d(\sigma_n,f^n(\sigma)) < \epsilon$, for every $n \geq 0$.
\end{proof}

It was proved in \cite{dds} that there is a comeager subset of $\contcantor$
such that each element $f$ in this set has property that the restriction of
$f$ to the $\omega$-limit set $\omega(\sigma,f)$ is
topologically conjugate to the universal odometer for a comeager set of
$\sigma \in \cantor$. The next result tell us that this actually holds for
{\em every} point $\sigma \in \cantor$.

\begin{thm}\label{LimitSetsCont}
There is a comeager subset of $\contcantor$ such that each $f$ in
this set has the following property:
The restriction of $f$  to every
$\omega$-limit set $\omega(\sigma,f)$ is topologically conjugate to the
universal odometer.
\end{thm}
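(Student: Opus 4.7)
The plan is to show that the comeager set $S$ produced by Theorem~\ref{genericthmcont} already has this property, and to imitate almost verbatim the proof of Theorem~\ref{LimitSets}, with balanced dumbbells replaced by strict balloons of type $(q!,q!)$. First I would fix $f \in S$ and construct, by repeated application of property~(Q), a decreasing sequence $(\p_m)$ of partitions of $\cantor$ together with natural numbers $(q_m)$ such that $\mesh(\p_m) < 1/m$, $\p_{m+1}$ refines $\p_m$, $q_{m+1}$ is a multiple of $mq_m$, and every component of $\gr(f,\p_m)$ is a balloon of type $(q_m!,q_m!)$ strict relative to $f$. This is possible because (Q) gives us partitions of arbitrarily small mesh with $q$ a multiple of any prescribed integer, and we can build the sequence inductively, at each stage choosing $\p_{m+1}$ to refine $\p_m$ and $q_{m+1}$ to be a multiple of $m q_m$.

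Next I would fix an arbitrary $\sigma \in \cantor$ and, for each $m$, identify the (unique) balloon
$$
B_m = \{v_1^{(m)},\ldots,v_{q_m!}^{(m)}\} \cup \{w_1^{(m)},\ldots,w_{q_m!}^{(m)}\}
$$
of $\gr(f,\p_m)$ containing $\sigma$. Because $B_m$ is strict, the orbit of $\sigma$ enters the loop $\{w_1^{(m)},\ldots,w_{q_m!}^{(m)}\}$ after at most $q_m!$ steps and then cycles through its vertices forever. Hence $\omega(\sigma,f)$ is contained in the union of the loop vertices, and it meets each vertex $w_j^{(m)}$. Setting $L_m = \{w_j^{(m)} \cap \omega(\sigma,f) : 1 \leq j \leq q_m!\}$ therefore yields a partition of $\omega(\sigma,f)$ into exactly $q_m!$ nonempty pairwise disjoint clopen sets which $f$ permutes cyclically. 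Because $\p_{m+1}$ refines $\p_m$, any $w_k^{(m+1)}$ that meets $\omega(\sigma,f)$ is contained in a vertex of $\p_m$ that also meets $\omega(\sigma,f)$, i.e.\ in some $w_j^{(m)}$; hence $L_{m+1}$ refines $L_m$. Finally, $\mesh(\p_m) \to 0$ guarantees that any nested sequence with one member from each $L_m$ has singleton intersection.

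All three hypotheses of the Buescu--Stewart/Block--Keesling criterion from \cite{BK} recalled in the proof of Theorem~\ref{LimitSets} are thus satisfied with
$$
\alpha = \Big(q_1!,\frac{q_2!}{q_1!},\frac{q_3!}{q_2!},\frac{q_4!}{q_3!},\ldots\Big),
$$
so $f|_{\omega(\sigma,f)}$ is topologically conjugate to $f_\alpha$. Since $q_{m+1} \geq m q_m \geq q_m + m$ for $m \geq 2$, the quotient $q_{m+1}!/q_m! = (q_m+1)(q_m+2) \cdots q_{m+1}$ contains $m$ consecutive integers as a factor and is therefore divisible by $m!$; it follows that $M_\alpha(p) = \infty$ for every prime $p$, so $f_\alpha$ is the universal odometer.

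The only mildly delicate step is verifying that $\omega(\sigma,f)$ meets \emph{every} vertex of the loop of $B_m$ (so that $L_m$ really has $q_m!$ nonempty pieces) and that the refinement relation $L_{m+1} \preceq L_m$ really carries through even though the bars of $B_m$ and $B_{m+1}$ need not be nested in any simple way; both of these reduce, however, to the strictness of the balloons and the cyclic behaviour of $f$ on each loop, and once property~(Q) has been installed, they are essentially automatic.
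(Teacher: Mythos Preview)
Your proposal is correct and follows essentially the same route the paper intends: the paper's own proof of Theorem~\ref{LimitSetsCont} simply says it is ``similar to that of Theorem~\ref{LimitSets}'' and omits the details, and what you have written is precisely that adaptation, replacing balanced dumbbells by strict balloons of type $(q_m!,q_m!)$ and using the single loop of each balloon in place of the right loop of each dumbbell. The one cosmetic imprecision is the inequality $mq_m \geq q_m + m$ for $m \geq 2$, which needs $q_m \geq 2$; this can fail for $m=2$ when $q_1 = q_2 = 1$, but the desired divisibility $m! \mid q_{m+1}!/q_m!$ still holds in that edge case (and in any event one can simply start the induction with $q_1 \geq 2$), so no real change is required.
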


\begin{proof}
The proof of this result is similar to that of Theorem~\ref{LimitSets}
and so we omit it.
\end{proof}

\begin{thm}\label{RecurrentSetCont}
The set of all $f \in \contcantor$ which satisfies the following properties is
comeager in $\contcantor$:
\begin{itemize}
\item [{\rm (a)}] $P(f)$ is empty.
\item [{\rm (b)}] $R(f) = \Omega(f) = CR(f)$.
\item [{\rm (c)}] $R(f)$ is a Cantor set with empty interior in $f(\cantor)$.
\end{itemize}
\end{thm}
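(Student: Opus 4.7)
The plan is to show that every $f$ in the comeager set $S$ of Theorem~\ref{genericthmcont} satisfies (a), (b) and (c); since Theorem~\ref{LimitSetsCont} is derived from (Q) in exactly the way Theorem~\ref{LimitSets} is derived from (P), I may also assume that for this $f$ every restriction $f|_{\omega(\sigma,f)}$ is conjugate to the universal odometer. Throughout, fix such an $f$ and apply property (Q) at will.

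For (a), if $f^n(\sigma) = \sigma$ with $n \geq 1$, I would apply (Q) with any $m > n$ to obtain an admissible partition $\p$ and a multiple $q \geq m$ of $m$ so that the vertex $v$ of $\sigma$ sits in a strict balloon $B$ of type $(q!,q!)$. If $v$ is a bar vertex, strictness pushes the orbit forward along the bar and into the loop, never returning to $v$. If $v = w_j$ is a loop vertex, the orbit visits the loop one step at a time, so $f^n(\sigma) = \sigma$ would force $q! \mid n$, contradicting $q! > n$.

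For (b), the inclusions $R(f) \subset \Omega(f) \subset CR(f)$ are standard, so I would focus on $CR(f) \subset R(f)$. Given $\sigma \notin R(f)$, choose $m$ so that $\{n \geq 1 : d(f^n(\sigma),\sigma) < 1/m\}$ is finite and let $\p$ be an admissible partition of mesh $< 1/m$ from (Q). If the vertex $v$ of $\sigma$ were a loop vertex $w_j$ of loop length $t$, the orbit would revisit $w_j$ every $t$ iterations and $d(f^{kt}(\sigma),\sigma) \leq \mesh(w_j) < 1/m$, contradicting finiteness; so $v = v_i$ is a bar vertex of some balloon $B$. Then, choosing $\delta$ smaller than the minimum distance between distinct cells of $\p$, every $\delta$-pseudo-trajectory from $\sigma$ is forced to track $\gr(f,\p)$ exactly, moving $v_i \to v_{i+1} \to \cdots \to v_s \to w_1 \to \cdots$, and is permanently absorbed into the loop of $B$ without ever returning to $v_i$. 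Hence $\sigma \notin CR(f)$.

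Part (c) is the most delicate. The set $R(f) = \Omega(f)$ is closed and nonempty; perfectness follows from Theorem~\ref{LimitSetsCont}, since every $\tau \in R(f)$ lies in the Cantor set $\omega(\tau,f) \subset R(f)$, so $R(f)$ is a Cantor set. For the empty-interior claim in $f(\cantor)$, I would argue by contradiction: suppose some open $U \subset \cantor$ satisfies $\emptyset \neq U \cap f(\cantor) \subset R(f)$, pick $\sigma \in U \cap f(\cantor)$ and choose an admissible $\p$ so fine that the vertex $v$ of $\sigma$ lies in $U$; by the argument of (b), $v = w_j$ is a loop vertex of some balloon $B$ of loop length $t$. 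Using the subballoon analysis from the proof of Theorem~\ref{genericthmcont}, I would then take an admissible refinement $\p'$ of $\p$ with loop length $t' > t$ and locate a subballoon $B'$ of $B$ (relative to $\p'$) whose initial vertex lies inside $w_j$. Tracing the bar of $B'$ one step at a time through $B$, its consecutive bar vertices fall into $w_j, w_{j+1},\ldots,w_{j+t} = w_j$, so the $(t+1)$-st bar vertex $v''$ of $B'$ is again contained in $w_j \subset U$. Since $v''$ has an incoming edge in $B'$, $v'' \cap f(\cantor) \neq \emptyset$; but every point of $v''$ has orbit eventually trapped in the loop of $B'$ and never back in the bar, so $v''$ contains no recurrent point, contradicting $v'' \cap f(\cantor) \subset R(f)$. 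The main obstacle is precisely this empty-interior argument, since it is the only place where one must juggle two admissible partitions of very different scales and exploit the strictness built into (Q) at both levels to produce a nonrecurrent point of $f(\cantor)$ arbitrarily close to $\sigma$.
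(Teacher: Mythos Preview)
Your argument is correct. Parts (a) and the ``Cantor set'' half of (c) match the paper exactly, but you take different routes in (b) and in the empty-interior half of (c), so a short comparison is worthwhile.

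For (b) the paper simply invokes Theorem~\ref{ChainContinuityCont}: chain continuity at every point immediately forces $CR(f)\subset R(f)$, since any $\delta$-chain from $\sigma$ back to $\sigma$ must $\epsilon$-shadow the true orbit, so $f^n(\sigma)$ returns $\epsilon$-close to $\sigma$. Your argument unpacks the same mechanism by hand---locating $\sigma$ in a bar vertex and observing that small pseudo-orbits are funnelled irreversibly into the loop---so the content is identical, only the packaging differs.

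For the empty-interior claim in (c) the paper's argument is shorter and stays at the single scale $\mathcal P$: having found $v=w_j\subset V$, it observes that $f^j(v_s)$ is a nonempty subset of $w_j\cap f(\cantor)$ and is disjoint from $R(f)$, contradicting $V\cap f(\cantor)\subset R(f)$. The disjointness from $R(f)$ is asserted without proof; it is justified because at any sufficiently fine admissible refinement $\mathcal P'$ the point $\xi\in v_s$ sits in a bar vertex of its subballoon, and since the bar length $q'!$ exceeds $s+j\le 2q!$, so does $f^j(\xi)$---hence $f^j(\xi)\notin R(f)$. Your two-scale subballoon construction (start a subballoon inside $w_j$ and walk $t$ steps around the loop of $B$ to land back in $w_j$ while still on the bar of $B'$) is a perfectly valid alternative that makes the use of a second partition explicit and leans on the strictness of $B$ to guarantee a subballoon of type $w_j$ exists. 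The trade-off: the paper's route is quicker once one sees why $f^j(v_s)\cap R(f)=\emptyset$, while yours is more self-contained and does not rely on that unstated step.
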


\begin{proof}
Let $f \in \contcantor$ satisfy property (Q).

\smallskip
\noindent (a): Obvious.

\smallskip
\noindent (b): The fact that $f$ is chain continuous at every point
(Theorem~\ref{ChainContinuityCont}) clearly implies that
$CR(f) \subset R(f)$, and so (b) holds.

\smallskip
\noindent (c): If $\tau \in R(f)$ then $\tau \in \omega(\tau,f) \subset R(f)$,
which implies that $\tau$ is not an isolated point of $R(f)$ since
$\omega(\tau,f)$ is a Cantor set (Theorem~\ref{LimitSetsCont}).
In view of (b), we conclude that $R(f)$ is a Cantor set. Finally,
suppose that $U$ is a nonempty open set of $f(\cantor)$ which is
contained in $R(f)$. Fix $\sigma \in U$ and let $V$ be an open set of
$\cantor$ such that $U = V \cap f(\cantor)$. Let $\p$ be a partition
of $\cantor$ such that every component of $\gr(f,\p)$ is a balloon and
$\mesh(\p)$ is so small that the vertex $v$ of $\gr(f,\p)$ containing
$\sigma$ must be contained in $V$. Let
$$
B = \{v_1,\ldots,v_s\} \cup \{w_1,\ldots,w_t\}
$$
be the component (balloon) of $\gr(f,\p)$ which contains the vertex $v$.
Since $\sigma \in R(f)$, $v$ must be one of the vertices $w_1,\ldots,w_t$;
say $v = w_j$. Then
$$
f^j(v_s) \subset w_j = v \subset V \ \ \ \text{ and } \ \ \
f^j(v_s) \cap R(f) = \emptyset,
$$
which implies that $f^j(v_s) \subset U \backslash R(f) = \emptyset$,
a contradiction.
\end{proof}

We remark that it was established in \cite{dd} that
the set of $f\in \contcantor$ without periodic point forms a comeager
subset of $\contcantor$.


\vspace{.1in}
\centerline{\sc Acknowledgement}

\vspace{.1in}
The first author was partially supported by CAPES:
Bolsista - Proc. n$^{\rm o}$ BEX 4012/11-9.

The second author thanks the Mathematics Institute of Federal University
of Rio de Janeiro for its support and hospitality during his one-month
visit in 2011.

The authors are grateful to the anonymous referee for making several valuable changes
which significantly improved the exposition of the paper.


\vspace{.2in}

\end{document}